\newtheorem{theorem}{Theorem}
\numberwithin{theorem}{section}
\newtheorem*{theorem*}{Theorem}
\newtheorem{definition}[theorem]{Definition}
\newtheorem{cor}[theorem]{Corollary}
\newtheorem{proposition}[theorem]{Proposition}
\newtheorem{lemma}[theorem]{Lemma}
\newtheorem{claim}[theorem]{Claim}
\newtheorem{remark}[theorem]{Remark}
\newtheorem{example}[theorem]{Example}
\newtheorem*{example*}{Example}
\newenvironment{subproof}[1][\proofname]{%
  \begin{proof}[#1]%
}{%
  \end{proof}%
}
\providecommand{\customgenericname}{}
\newcommand{\newcustomtheorem}[2]{%
	\newenvironment{#1}[1]
	{%
		\renewcommand\customgenericname{#2}%
		\renewcommand\theinnercustomgeneric{##1}%
		\innercustomgeneric
	}
	{\endinnercustomgeneric}
}
\newtheorem{assumption}[theorem]{Assumption}
\tikzset{decision/.style={diamond, draw, text width=4.5em, text badly centered, inner sep=0pt}}
\tikzset{inout/.style={ellipse, draw, text width=7em, text centered, rounded corners,
 minimum width=3.5cm}}
\tikzset{block/.style={rectangle, draw, text width=12em, text centered, rounded corners,
 minimum width=3.5cm}}
 \tikzset{block1/.style={rectangle, draw,fill=gray!20, text width=10em, text centered, rounded corners,
  minimum width=3.5cm}}
\tikzset{line/.style={draw, -latex}}
\title{Bridging the gap: symplecticity and low regularity in Runge--Kutta resonance-based schemes}
\author{Georg Maierhofer\thanks{Corresponding author; georg.maierhofer@maths.ox.ac.uk} \and Katharina Schratz\thanks{katharina.schratz@sorbonne-universite.fr}}
\date{\footnotemark[1]\ Mathematical Institute, University of Oxford, OX2 6GG United Kingdom\\\ \vspace{-0.2cm}\\
\footnotemark[2]\ Laboratoire Jacques-Louis Lions (UMR 7598), Sorbonne Universit\'e, UPMC,\\ 4 place Jussieu, 75005 France\\\ \vspace{0.2cm}\\\today}
\newcommand{\dd}{\mathrm{d}}
\newcommand{\e}{\mathrm{e}}
\newcommand{\re}{\mathrm{Re}}
\newcommand{\doublewidetilde}[1]{{%
  \mathpalette\double@widetilde{#1}%
}}
\newcommand{\double@widetilde}[2]{%
  \sbox\z@{$\m@th#1\widetilde{#2}$}%
  \ht\z@=.9\ht\z@
  \widetilde{\box\z@}%
}
\newcommand\blfootnote[1]{%
  \begingroup
  \renewcommand\thefootnote{}\footnote{#1}%
  \addtocounter{footnote}{-1}%
  \endgroup
}
\begin{document}

\pagenumbering{arabic}
	
\maketitle
\vspace{-1cm}
\begin{abstract}
Recent years have seen an increasing amount of research devoted to the development of so-called resonance-based methods for dispersive nonlinear partial differential equations. In many situations, this new class of methods allows for approximations in a much more general setting (e.g. for rough data) than, for instance, classical splitting or exponential integrator methods. However, they lack one important property:
the \emph{preservation of geometric properties of the flow}. This is particularly drastic in the case of the Korteweg--de Vries (KdV) equation and the nonlinear Schr\"odinger equation (NLSE) which are fundamental models in the broad field of dispersive infinite-dimensional Hamiltonian systems, possessing infinitely many conserved quantities, an important property which we wish to capture - at least up to some degree - also on the discrete level. Nowadays, a wide range of structure preserving integrators for Hamiltonian systems are available, however, typically these existing algorithms can only approximate highly regular solutions efficiently. State-of-the-art low-regularity integrators, on the other hand,  poorly preserve the geometric structure of the underlying PDE. {In this work we introduce a novel framework, so-called Runge--Kutta resonance-based methods, {for a large class of dispersive nonlinear equations} which} {incorporate a much larger amount of degrees of freedom than prior resonance-based schemes while featuring similarly favourable low-regularity convergence properties}. {In particular, for the KdV and NLSE case, we} are able to bridge the gap between low regularity and structure preservation {by characterising} a large class of symplectic (in the Hamiltonian picture) resonance-based methods for both equations that allow for low-regularity approximations to the solution while preserving the underlying geometric structure of the continuous problem on the discrete level. 
\end{abstract}\blfootnote{\textbf{2020 Mathematics Subject Classification.} \textit{Primary:} 35Q41, 35Q53, 35Q55, 65M12, 65M70.}\blfootnote{\textbf{Key words and phrases.} Geometric numerical integration, resonances, low regularity, symplecticity.}

\tableofcontents
\section{Introduction}
In this work we focus on the numerical approximation of solutions to {the following class of dispersive nonlinear partial differential equations.} {\begin{align}\label{eqn:general_dispersive_nonlinear_eqn}
\begin{cases}
i \partial_t u(t,x) +   \mathcal{L}\left(\nabla\right) u(t,x) =\vert \nabla\vert^\alpha \rho\left(u(t,x), \overline u(t,x)\right), &\quad (t,x)\in [0,T]\times \mathbb{T}^d,\\
u(0,x) = u_0(x),&\quad x\in\mathbb{T}^d,
\end{cases}    
\end{align}
where $\mathcal{L}$ is a dispersive linear operator (possibly oscillatory in the small parameter $\varepsilon$), $\rho$ is a polynomial nonlinearity which may depend on both $u$ and $\overline{u}$, $d\in\mathbb{N}$ and $\mathbb{T}^d=\left(\mathbb{R}/(2\pi\mathbb{Z})\right)^d$ is the $d$-dimensional torus. Two classical examples of such equations are the Korteweg--De Vries (KdV) equation and the nonlinear Schr\"odinger equation (NLSE), which are described in further detail in Section~\ref{sec:construction_of_rk_res_schemes} below.} We seek to construct numerical algorithms which can
\begin{itemize}
    \item[(I)] approximate the time dynamics of the partial differential equation under low regularity assumptions, i.e., allowing for rough data, and at the same time, 
    \item[(II)]  preserve the underlying geometric structure of the continuous problem.
\end{itemize}The numerical solution of   nonlinear dispersive equations with low-regularity data is thereby an ongoing challenge of its own right: Classical numerical {time-integrators} are developed with analytic solutions in mind \cite{iserles_2008}. For this reason, classical integrators require a significant amount of regularity of the solution to converge reliably. 
The  necessity for smooth solutions is not just a theoretical technicality: The severe order reduction of classical methods in the low-regularity setting is indeed  observed in practice \cite{bruned_schratz_2022,hofmanova2017exponential,ostermann2018low} (see also Section~\ref{sec:numerical_experiments}) leading to instability, loss of convergence and huge computational costs. Over the recent decade, this challenge has motivated the idea of tailored low-regularity integrators which are able to provide reliable  convergence rates in a more general setting allowing for, e.g., rough initial data \cite{li2022unfiltered,wu2021embedded}. A particular class of integrators which has proven successful in a range of applications are so-called {resonance-based} methods \cite{Bronsardbruned2022,bruned_schratz_2022,knoeller2019fourier,ostermann2018low,limaschratz2022,ostermann2021error,roussetschratz2021,rousset2021general}.

While in many situations this new class of integrators allows for approximations of much rougher solutions than, for instance, classical splitting methods \cite{holden2011operator,holden2013operator}, previous resonance-based approaches lack one important property: the preservation of geometric structures. This is particularly drastic in case of the KdV equation and the {one-dimensional NLSE} which are completely integrable, possessing infinitely many conserved quantities \cite{dunajski2010solitons,iandoli2022}, an important property which we wish to  capture - at least up to some degree - also on the level of the discretisation. A revolutionary step in this direction was taken by the theory of {geometric numerical integration}  \cite{engquist2009highly,hairer2013geometric,leimkuhler_reich_2005,sanz2018numerical,shen2019geometric} resulting in the development of a wide range of structure-preserving algorithms firstly for dynamical systems and later also for partial differential equations with conservation laws \cite{ascher2005symplectic,celledoni2008symmetric,faou2012geometric,ROUHI199518,ROUHI1996209,shen2019geometric}. 
However, in general, these methods rely heavily on the treatment of highly regular solutions to achieve guaranteed convergence. 
State-of-the-art low-regularity integrators,  whilst allowing for approximations for rougher data,  on the other hand, come with the  major drawback of poor preservation of geometric structure of the underlying PDE (cf. \cite{bruned_schratz_2022,ostermann2018low} and also Section~\ref{sec:numerical_experiments}).

 Albeit some recent work has started to look at symmetric low-regularity integrators for specific equations \cite{fengmaierhoferschratz23,banicamaierhoferschratz22,AlamaBronsard23}, so far no low-regularity integrators exist which can provably preserve first integrals of the underlying equation. Thus, generally speaking, until now structure preservation seemed out of reach for low-regularity integrators, and the low regularity regime was out of reach for structure-preserving algorithms. With this work, we aim to take a central step towards bridging this gap, {by introducing the first symplectic low-regularity integrators for the NLSE and the KdV equation} which exactly preserve the quadratic first integrals of these equations.

In particular, we introduce a novel point-of-view on the construction of resonance-based schemes, which is motivated by the design of methods for highly oscillatory quadrature \cite{deano2017computing,iserles2004,iserles2005,iserlesnorsett2004,maierhofer2020extended} and classical Runge--Kutta schemes. {This point-of-view highlights the central idea of resonance-based methods which is to use a suitable averaging process based on Duhamel's formula to mitigate regularity requirements in the numerical approximation. In this way, resonance-based methods can be related to the average vector field method \cite{mclachlan1999geometric,quispel2008new,celledoni2009energy} and it is natural to try and incorporate more degrees of freedom as compared to explicit schemes in prior work on low-regularity integration.} This novel construction leads to a broad class of low-regularity integrators, which we call Runge--Kutta resonance-based schemes {(RK resonance-based schemes)}, that incorporate a much larger amount of degrees of freedom than previous resonance-based schemes and as a result also lend themselves to the inclusion of structure preserving properties, without breaking the low-regularity approximation properties.

In contrast to classical resonance-based methods \cite{bruned_schratz_2022,ostermann2018low,roussetschratz2021,rousset2021general}, which are all explicit, RK resonance-based schemes allow for an implicit nature of the numerical methods, which is shown in Section~\ref{sec:examples_of_symplectic_integrators} to be a necessary condition in our characterisation of symplectic RK resonance-based schemes. This is very much in the spirit of classical Runge--Kutta methods which are necessarily implicit if they are symmetric or symplectic (cf. \cite{Kulikov2003,hairer2013geometric}). {Our construction of these RK resonance-based schemes (exhibited in {Section~\ref{sec:construction_of_rk_res_schemes})} {involves} two central steps which differ significantly from prior constructions: {firstly,} we replace classical left-endpoint approximations in these expressions by {interpolating polynomials and ideas from highly oscillatory quadrature, which results in an analogue of the construction of classical Runge--Kutta methods that are based on classical quadrature}}; {secondly,} we revisit the the low-regularity kernel approximation in Duhamel's formula for the construction of resonance-based schemes to ensure that our new approximations respect the symplectic structure of the original equation {(this is necessary only in the case of the NLSE)}.

The remainder of this manuscript is structured as follows. In {Section}~\ref{sec:construction_of_rk_res_schemes} {we recall some relevant background on prior work in resonance-based schemes before introducing our novel construction of RK resonance-based methods firstly for the KdV equation and then for the general class \eqref{eqn:general_dispersive_nonlinear_eqn}. We demonstrate how these novel integrators connect to existing work in this field, in particular demonstrating how certain prior low-regularity integrators can be seen as RK resonance-based methods. This is followed in Section~\ref{sec:symplectic_RK_res_based_schemes} by a more specific construction of a subclass of symplectic resonance-based methods for the KdV equation and the NLSE which} requires an additional layer in the construction {namely a symplectic kernel approximation in Duhamel's formula}. {The structure preservation properties of these schemes are analysed in further detail in {Section}~\ref{sec:structure_preservation}, and a characterisation of all symplectic and quadratic invariant preserving schemes in this class is provided for both equations.} We provide several examples of schemes in this class, and in particular study the low-regularity convergence properties of the so-called resonance-based midpoint rule in further detail for both equations in Sections~\ref{sec:convergence_analysis_KdV} \& \ref{sec:convergence_analysis_NLSE}. {An outlook of a more general framework for convergence analysis of RK resonance-based schemes is provided in Section~\ref{sec:general_idea_convergence_analysis}.} Finally, our theoretical findings are {underlined} in computational experiments which are described in Section~\ref{sec:numerical_experiments} and concluding remarks indicating possible future directions for this research are provided in Section~\ref{sec:conclusions}.


\section{Construction of Runge--Kutta resonance-based schemes}\label{sec:construction_of_rk_res_schemes}
In the following we introduce a new class of integrators, which extends previous work on resonance-based schemes (\cite{hofmanova2017exponential,ostermann2018low,bruned_schratz_2022}) {by taking the following two novel steps in the construction:
\begin{enumerate}
\item {in the present section,} we follow ideas from highly oscillatory quadrature and construct multilevel schemes by considering interpolating polynomials of the twisted variable contributions to the variations of constants expression of the solution;
\item {in Section~\ref{sec:symplectic_RK_res_based_schemes},} we introduce a novel low-regularity kernel approximation in Duhamel's formula which respects the symplectic structure of the original equation {for two examples of Hamiltonian dispersive nonlinear PDEs, the KdV equation and the NLSE}.
\end{enumerate}}
These features allow us to incorporate more degrees of freedom and therefore facilitate structure preservation into our low-regularity integrators. In particular{, in Section~\ref{sec:structure_preservation},} we characterise a subclass of these low-regularity schemes which is able to exactly preserve the quadratic first integrals {and the symplectic forms of the KdV and the NLS equations.}
\subsection{Preliminaries}\label{sec:preliminaries}
Before diving into the construction of this novel class of resonance-based schemes, let us set the scene by {being more specific about the types of equations we consider and by} recalling some central properties and tools that will be used in our later analysis. To begin with we will conduct most of the convergence analysis on periodic Sobolev spaces $H^r=H^r(\mathbb{T}),r\geq 0,$ with norm
\begin{align*}
	\|u\|_{H^{r}}^2:=\sum_{m\in\mathbb{Z}}\langle m\rangle^{2r}|\hat{u}_m|^2,\ \text{where}\ \langle m\rangle=\begin{cases}
		|m|,& m\neq 0,\\
		1, & m={0,}
	\end{cases}
\end{align*}
where the Fourier coefficient $\hat{u}_m$ is given by
\begin{align*}
	\hat{u}_m=\frac{1}{2\pi}\int_{\mathbb{T}} e^{-i m x} u(x) \dd x.
\end{align*}
{Throughout this manuscript the following well-known bilinear estimates will prove to be a useful tool:
	\begin{lemma}\label{lem:bilinear_estimates}
		For any $r>1/2$ there is a constant $C_r>0$ such that for all $f,g\in H^r$ we have
		\begin{align*}
			\|fg\|_{H^r}\leq C_r\|f\|_{H^r}\|g\|_r.
		\end{align*}
	\end{lemma}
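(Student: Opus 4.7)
The plan is to prove this classical Sobolev algebra estimate (in dimension one) by passing to Fourier coefficients and exploiting a convolution inequality, where the assumption $r>1/2$ enters at the standard single place. Throughout I will use the convention that constants are generic and may change from line to line but depend only on $r$.

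First, I would expand $\widehat{fg}_m=\sum_{a+b=m}\hat f_a\hat g_b$ and apply Peetre's elementary inequality $|a+b|^r\le C_r(|a|^r+|b|^r)$ (valid for $r\ge 0$, with the trivial case $r=0$) to obtain the pointwise bound
\begin{align*}
|m|^r|\widehat{fg}_m|\le C_r\bigl((|\cdot|^r|\hat f|)*|\hat g|\bigr)_m+C_r\bigl(|\hat f|*(|\cdot|^r|\hat g|)\bigr)_m,
\end{align*}
where $*$ denotes the discrete convolution on $\mathbb{Z}$. Taking the $\ell^2$-norm in $m$ and invoking Young's convolution inequality $\|h*k\|_{\ell^2}\le\|h\|_{\ell^2}\|k\|_{\ell^1}$ yields
\begin{align*}
\|fg\|_{H^r}\le C_r\bigl(\||\cdot|^r\hat f\|_{\ell^2}\|\hat g\|_{\ell^1}+\|\hat f\|_{\ell^1}\||\cdot|^r\hat g\|_{\ell^2}\bigr).
\end{align*}

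The second step is to absorb the $\ell^1$ norms into $H^r$ norms, and this is precisely where the hypothesis $r>1/2$ is used. By Cauchy--Schwarz,
\begin{align*}
\|\hat g\|_{\ell^1}=\sum_{n\in\mathbb{Z}\setminus\{0\}}|n|^{-r}\,|n|^r|\hat g_n|\le\Bigl(\sum_{n\neq 0}|n|^{-2r}\Bigr)^{1/2}\|g\|_{H^r},
\end{align*}
and the prefactor is finite exactly because $2r>1$. (If one insists on including a zero mode, an analogous estimate holds with $(1+|n|)^{-2r}$ weights; in the present paper the relevant space consists of mean-zero functions under Assumption~\ref{assumption:zero_mass}, so the above form suffices.) The same bound applies to $\|\hat f\|_{\ell^1}$. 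Combining with the previous display gives
\begin{align*}
\|fg\|_{H^r}\le C_r\|f\|_{H^r}\|g\|_{H^r}
\end{align*}
with a constant depending only on $r$, which is the claim. The main (minor) obstacle is simply to be careful that the zero mode is handled consistently with the paper's convention for $H^r$ on the quotient space, but this is purely bookkeeping since the argument only requires summability of $|n|^{-2r}$ away from the origin.
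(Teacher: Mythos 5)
Your proof is correct: it is the standard Fourier-side argument (Peetre's inequality, Young's convolution inequality, and Cauchy--Schwarz using $2r>1$ to control the $\ell^1$ norm), and your handling of the zero mode is consistent with the paper's quotient-space convention for $H^r$. The paper itself does not prove this lemma but only cites \cite[Eqs.~(10)--(11)]{bronsard2022error}; note, however, that your argument is essentially the same technique the paper uses in Appendix~\ref{app:proof_of_auxilliary_convolution_estimate} for the related convolution estimate of Lemma~\ref{lem:auxilliary_convolution_estimate}, so nothing further is needed.
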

	For further details and a proof of Lemma~\ref{lem:bilinear_estimates} see for instance \cite[Eqs.~(10)-(11)]{bronsard2022error}.} 
{The types of equations we consider are of the form \eqref{eqn:general_dispersive_nonlinear_eqn} with polynomial nonlinearities $p$ and linear operators $\mathcal{L},|\nabla|^\alpha$ which satisfy the following assumptions (cf. \cite{bruned_schratz_2022}).
\begin{assumption}\label{assump:fourier_form_of_relevant_operators} The linear operators $\mathcal{L}, |\nabla|^\alpha$ have the following representation in Fourier coordinates:
   \begin{align}\label{eqn:form_L_in_fourier}
   \mathscr{L}\left(\nabla\right)(k) =a_\lambda k^\lambda + \sum_{\gamma : |\gamma| < \lambda} a_{\gamma} \prod_{j} k_j^{\gamma_j} ,\quad \vert \nabla\vert^\alpha(k) =  \sum_{\gamma : |\gamma| < \lambda} b_{\gamma}\prod_{j=1}^{d} k_j^{\gamma_j},
\end{align}
for some $\lambda \in \mathbb{N}$, $ \lambda>\alpha$, $a_\gamma,b_\gamma\in\mathbb{R}$, and where we use the notation $ |\gamma| = \sum_i \gamma_i $ for $ \gamma \in \mathbb{Z}^d $ and
\begin{align*}
k^\lambda  = k_1^\lambda + \ldots + k_d^\lambda,\quad\text{for\ \ }k = (k_1,\ldots,k_d)\in \mathbb{Z}^d.
\end{align*}
\end{assumption}}
{\begin{remark}
    Note that while we focus on the form shown above, similarly to the ideas in \cite{bruned_schratz_2022} we can also treat the highly oscillatory case $\mathcal{L}=\mathcal{L}(\nabla,\dfrac{1}{\varepsilon})$ for some $\varepsilon\ll 1$, by considering operators of the form
$$
\mathscr{L}\left(\nabla, \frac{1}{\varepsilon}\right)=\frac{1}{\varepsilon^\lambda}+\mathscr{B}\left(\nabla, \frac{1}{\varepsilon}\right), \quad|\nabla|^\alpha=\mathscr{C}\left(\nabla, \frac{1}{\varepsilon}\right),
$$
for some differential operators $\mathscr{B}\left(\nabla, \frac{1}{\varepsilon}\right)$ and $\mathscr{C}\left(\nabla, \frac{1}{\varepsilon}\right)$ which can be bounded uniformly in $|\varepsilon|$ and are relatively bounded by differential operators of degree $\lambda$ and degree $\alpha<\lambda$, respectively. Following similar ideas to the ones described in this section would allow us to construct RK resonance-based schemes for Klein--Gordon-type equations.
\end{remark}}
{The following are two classical examples of dispersive nonlinear systems that fall in the class described by Assumption~\ref{assump:fourier_form_of_relevant_operators}.
\begin{example}
The periodic Korteweg--de Vries (KdV) equation \cite{drazin_johnson_1989} is given by
\begin{align}\label{eqn:standard_IVP_KdV}
	\begin{cases}
		\partial_t u(t,x)+\partial_{x}^3u(t,x)=\frac{1}{2}\partial_x\left(u(t,x)\right)^2,&\quad (t,x)\in [0,T]\times \mathbb{T},\\
		u(0,x)=u_0(x),&\quad x\in\mathbb{T},
	\end{cases}
\end{align}
and is of the form \eqref{eqn:general_dispersive_nonlinear_eqn} with $d=1,\mathcal{L}(\nabla)=i\partial_x^3,\alpha=1,\rho(u,\overline{u})=\frac{i}{2}u^2$.
\end{example}}
{\begin{example}\label{ex:NLSE}
The periodic nonlinear Schr\"odinger equation (NLSE) is given by
\begin{align}\label{eqn:Cauchy_problem_NLS}
	\begin{cases}
		i\partial_t u(t,x)=-\Delta^2 u(t,x)+\mu |u(t,x)|^{2p}u(t,x),&\quad (t,x)\in [0,T]\times \mathbb{T}^d,\\
		u(0,x)=u_0(x),&\quad x\in\mathbb{T}^d,		
	\end{cases}
\end{align}
where $p\in\mathbb{N}$, $\mu\in\mathbb{R}$. The NLSE is of the form  \eqref{eqn:general_dispersive_nonlinear_eqn} with $\mathcal{L}(\nabla)=\Delta,\alpha=0,\rho(u,\overline{u})=\mu(u\overline{u})^p$.
\end{example}}
\subsection{Introductory example: the KdV equation}\label{sec:RK_res_methods_for_KdV}
Having introduced these preliminaries let us know consider the construction of RK resonance-based methods. We begin our discussion with the KdV setting, because here we are able to resolve the nonlinear frequency interactions exactly {and the process of constructing RK resonance-based schemes simplifies because no kernel approximations are required. The extension of this construction to more general equations based on low-regularity kernel approximations is discussed in subsequent sections.} {To begin with we note that the mass $\int_{\mathbb{T}} u(\cdot,x)\dd x$ is conserved in \eqref{eqn:standard_IVP_KdV}, and that we may (by considering $u_0\mapsto u_0-\int_{\mathbb{T}}u_0\dd x$) therefore impose without loss of generality the following assumption to simplify subsequent constructions:
	\begin{assumption}\label{assumption:zero_mass}
		We assume throughout that our solution to the KdV equation has zero mass, i.e. that $\int_{\mathbb{T}}u_0(x)\dd x=0$.
	\end{assumption}
	In order to derive RK resonance-based methods it will be helpful to consider the twisted variable $v(t,x)=\exp(\partial_x^3 t)u(t,x).$ This change of variable is widely known to provide a useful tool both for the analysis of dispersive nonlinear equations \cite{bourgain1993,tao2006nonlinear} and the construction of tailored numerical schemes. The twisted variable $v$ satisfies the following initial value problem which is equivalent to \eqref{eqn:standard_IVP_KdV}:
	\begin{align}\begin{split}\label{eqn:interation_picture_kdv} 
			\begin{cases}
				\partial_tv(t,x)=\frac{1}{2}\e^{t\partial_x^3}\partial_x\left(\e^{-t\partial_x^3}v(t,x)\right)^2,&(t,x)\in\mathbb{R}_{+}\times \mathbb{T},\\
				v(0,x)=u_0(x),&x\in\mathbb{T}.
			\end{cases}
		\end{split}
	\end{align}	
	It is easy to see that under Assumption~\ref{assumption:zero_mass} the twisted variable will also satisfy $\int_{\mathbb{T}} v(t,x)\dd x=0$ for all times $t\geq 0$.} {We then consider Duhamel's formula for the twisted system \eqref{eqn:interation_picture_kdv}} where, for {notational simplicity}, we suppress the $x$-dependence of the unknown functions in the following notation
\begin{align}\label{eqn:Duhamel_for_v}
v(t_n+\tau)=v(t_n)+\frac{1}{2}\int_{0}^\tau \e^{(t_n+s)\partial_x^3}\partial_x\left(\e^{-(t_n+s)\partial_x^3}v(t_n+s)\right)^2\dd s.
\end{align}
{In the above we denoted by $\tau>0$ the time step and by $t_n=n\tau, n\in\mathbb{N},$ the time grid.} We can reformulate \eqref{eqn:Duhamel_for_v} in terms of the Fourier coefficients of $v$ as follows:
\begin{align}\nonumber
\hat{v}_m(t_n+\tau)&=\hat{v}_m(t_n)+\sum_{a+b=m} \frac{im}{2}\e^{-it_n(m^3-a^3-b^3)}\int_0^\tau \e^{-is(m^3-a^3-b^3)}\hat{v}_a(t_n+s)\hat{v}_b(t_n+s)\dd s\\\label{eqn:duhamel_in_fourier_for_v}
&=\hat{v}_m(t_n)+\sum_{a+b=m} \frac{im}{2}\e^{-it_n3mab}I^\tau_{a,b} {[v]},
\end{align}
where we defined the oscillatory integral
{\begin{align}\label{oscint}
    I^\tau_{a,b}[v]:=\int_0^\tau \e^{-is3mab}\hat{v}_a(t_n+s)\hat{v}_b(t_n+s)\dd s,
\end{align}}
and used the algebraic relation $(a+b)^3-a^3-b^3=3(a+b)ab$. The central observation is that the nonlinear frequency interactions in the KdV system are now captured by the oscillatory terms 
\begin{align}\label{cosc}
\exp(-is3mab).
\end{align} 

Our key idea in the novel construction lies now in embedding these nonlinear frequency interactions exactly into our numerical discretisation (in the spirit of resonance-based schemes \cite{Bronsardbruned2022,bruned_schratz_2022,hofmanova2017exponential}), {while approximating the non-oscillatory parts in a more general way than prior work on resonance-based schemes.} In the discretisation of our oscillatory integral \eqref{oscint} this idea translates to treating these central oscillations \eqref{cosc} exactly and to henceforth only approximate numerically the corresponding non-oscillatory parts
\begin{align}\label{eq:svpa}
\hat{v}_a(t_n+s)\quad \text{and}\quad \hat{v}_b(t_n+s)
\end{align}
in \eqref{oscint}. Note that $\hat{v}_\sigma(t_n+s)$ ($\sigma =a,b$) are indeed slowly varying as, thanks to 
 \eqref{eqn:interation_picture_kdv}, we have for any $s>d/2$
\begin{align}\label{tayK}
\left\|\partial_t v\right\|_{s}\leq c\left\|\partial_x v\right\|_{s}\left\|v\right\|_{s}
\end{align}
for some constant $c>0$ independent of $v,u_0$. Here we relied on the bilinear estimates from Lemma~\ref{lem:bilinear_estimates} and the fact that $v\mapsto \exp(\pm t\partial_x^3)v$ is an isometry on $H^s$, for all $s,t\geq0$. 

In {the} prior work \cite{hofmanova2017exponential} the central idea in the discretisation of the oscillatory integral \eqref{oscint} lies in a simple Taylor series expansion of the non-oscillatory parts \eqref{eq:svpa} in the spirit of
\begin{align}\label{eq:svK}
\hat{v}_\sigma(t_n+s)\approx \hat{v}_\sigma(t_n), \quad \text{for any }\sigma\in\mathbb{Z}.
\end{align}
Together with the observation in \eqref{tayK} this leads to a local error structure at low regularity of  the form
\begin{align}\label{lowE}
 \mathcal{O}\left( s \partial_t v \right) =    \mathcal{O}\left( s \partial_x v^2 \right).
\end{align}
We call the above error  of \emph{low regularity} as a classical direct approximation of the KdV equation (for example a first order exponential integrator) would introduce a local error  at order 
\begin{align*}
 \mathcal{O}\left( s \partial_t u  \right) =    \mathcal{O}\left( s \partial_x^3 u\right)
\end{align*}
which involves higher {spatial} derivatives (and thus higher regularity assumptions on the solution) than \eqref{lowE}.

Most resonance-based schemes proposed in the literature so far follow exactly this construction \cite{bruned_schratz_2022,hofmanova2017exponential,ostermann2018low}. Due to the favourable local error structure of this approach, in general one obtains better approximations at low regularity than classical numerical schemes (e.g., splitting, exponential integrator or Lawson-type methods). However, a major drawback lies in the fact that the quite brutal approximation \eqref{eq:svK} destroys the symplectic structure of the KdV flow
\begin{align}\nonumber
v(0) \mapsto \phi_{0,t}(v(0)) = v(0)+\frac{1}{2}\int_0^t \e^{s\partial_x^3}\partial_x\left(\e^{-s\partial_x^3}\phi_{0,s}(v(0))\right)^2\dd s.
\end{align}

In order to overcome this, our new idea lies in the fact that \textit{qualitatively} the size of the local error in the numerical scheme would remain the same if we used for $\sigma =a,b$ the implicit approximation
\begin{align*}
\hat{v}_\sigma(t_n+s)\approx \hat{v}_\sigma(t_n+\tau) 
\end{align*}
or indeed if we took, more generally, a polynomial-type interpolant for the term involving the unknown: 
\begin{align}\label{eqn:interpolation_unknown}
	\hat{v}_\sigma(t_n+s)\approx P(s):=\sum_{p=0}^{S}\frac{s^p}{\tau^p} \sum_{q=0}^Sa_{p,q}\hat{v}_\sigma(t_n+c_q\tau)
\end{align}
for some $S\in\mathbb{N}, 0\leq c_0<c_1<\cdots<c_{S}\leq 1, a_{p,q}\in\mathbb{C}, p=0\dots S$. {Note the factors $\frac{s^p}{\tau^p}$ can be justified by mapping the interpolation problem to a unit reference interval $[0,1]$ instead of $[0,\tau]$.} The central observation is that the resulting integrals can still be given an exact representation in physical space, thus allowing for fast FFT-based computations: Indeed let us define the maps
\begin{align}\label{eqn:central_integrals_in_KdV_RK_construction}
   v\mapsto \mathcal{F}^{[KdV]}_p(\tau; c_q;v):=\frac{1}{2}\sum_{m\in\mathbb{Z}}e^{ix m}\frac{1}{\tau^{p+1}}\sum_{a+b=m}\int_{0}^{c_q\tau} ime^{-is3mab} s^{p}\dd s\hat{v}_a\hat{v}_b.
\end{align}
This can be expressed as follows:
\begin{align*}
    \mathcal{F}^{[KdV]}_p(\tau; c_q;v)=\frac{1}{2}\sum_{m\in\mathbb{Z}}e^{ix m}\sum_{a+b=m}c_q^{p+1} im\varphi_{p+1}(-ic_q\tau 3mab)\hat{v}_a\hat{v}_b,
\end{align*}
where we have the recurrence:
\begin{align*}
    \varphi_{p+1}(z)=\frac{e^{z}-p\varphi_{p}(z)}{z},\quad \varphi_1(z)={\frac{e^{z}-1}{z},}
\end{align*}
which allows us to express the functions in a simple way in physical space, for example (using assumption~\ref{assumption:zero_mass}): \begin{align}\begin{split}\label{eqn:physical_coords_KdV_integrals}
\mathcal{F}^{[KdV]}_0(\tau;c_q;v)&=\frac{1}{6\tau}e^{\tau c_q\partial_x^3}\left(e^{-\tau c_q\partial_x^3}\partial_x^{-1}v\right)^2-\frac{1}{6\tau}\left(\partial_x^{-1}v\right)^2,\\
\mathcal{F}^{[KdV]}_1(\tau;c_q;v)&=\frac{1}{18\tau^2}\partial_x^{-1}\left(\partial_x^{-2}v\right)^2-\frac{1}{18\tau^2}\partial_x^{-1}e^{\tau c_q\partial_x^3}\left(e^{-\tau c_q\partial_x^3}\partial_x^{-2}v\right)^2+\frac{c_q}{6\tau}e^{\tau c_q\partial_x^3}\left(e^{-\tau c_q\partial_x^3}\partial_x^{-1}v\right)^2{,}
\end{split}
\end{align}
where we defined
\begin{align}\label{eqn:definition_dx_inv}
    \widehat{\left(\partial_x^{-1}v\right)}_{k}:=\begin{cases}0,& k=0,\\
        \frac{1}{ik}\hat{v}_k,&k\neq0.
    \end{cases}
\end{align}
Following the above remarks we define RK resonance-based schemes, motivated by Duhamel's formula \eqref{eqn:duhamel_in_fourier_for_v}, as follows:
\begin{align}\begin{split}\label{eqn:RK_res_based_schemes_KdV}
	u^{n+1}&=e^{-\tau\partial_x^3}u^{n}+\tau\sum_{p,q,r=0}^{S} b^{p,q,r}e^{-\tau\partial_x^3}K_{p,q,r},\\
	K_{p,q,r}&=\mathcal{F}^{[KdV]}_p(\tau; c_q; u^{n}+\tau \sum_{\tilde{p},\tilde{q},\tilde{r}=0}^{S} a_{p,q,r}^{\tilde{p},\tilde{q},\tilde{r}}K_{\tilde{p},\tilde{q},\tilde{r}}),\end{split}
\end{align}
for some constants $c_q\in[0,1], a_{p,q,r}^{\tilde{p},\tilde{q},\tilde{r}},b^{p,q,r}\in\mathbb{R}, 0\leq p,q,r,\tilde{p},\tilde{q},\tilde{r}\leq S$ and a given $S\in\mathbb{N}$. Note that the form \eqref{eqn:RK_res_based_schemes_KdV} is similar to classical Runge--Kutta methods but incorporates additional degrees of freedom to allow for a `highly-oscillatory quadrature' resolution of the integrals in Duhamel's formula. A comparable formulation is known to describe exponential Runge--Kutta methods as introduced by \cite{hochbruck2005exponential} (see also \cite[Section~2.3]{hochbruck_ostermann_2010}).
\begin{remark}
In similar vein to the proof of stability in {Section~{\ref{sec:convergence_analysis_KdV}}} it can be shown that the methods found in the class \eqref{eqn:RK_res_based_schemes_KdV} are in fact all unconditionally stable for solutions in $H^s,s>2$, which is in itself a significant advantage over other existent methods for the KdV equation.
\end{remark}
\begin{example}
Perhaps one of the simplest examples in this category of RK resonance-based integrators is the explicit first order method introduced by \cite{hofmanova2017exponential}, which takes the form
\begin{align*}
    u^{n+1}=e^{-\tau\partial_x^3}u^n+\frac{1}{6}\left(e^{-\tau\partial_x^3}\partial_x^{-1}u^n\right)^2-\frac{1}{6}e^{-\tau\partial_x^3}\left(\partial_x^{-1}u^n\right)^2   .
\end{align*}
This method can be found from the above expression \eqref{eqn:RK_res_based_schemes_KdV} by taking $S=0$ and the coefficients $c_0=b^{0,0,0}=1,a_{0,0,0}^{0,0,0}=0$.
\end{example}
The advantage of \eqref{eqn:RK_res_based_schemes_KdV} over classical explicit resonance-based methods (\cite{hofmanova2017exponential,bruned_schratz_2022}) is that this novel formulation incorporates many more degrees of freedom allowing for schemes with structure preserving properties (cf. {Section}~{\ref{sec:symplectic_RK_res_based_schemes}}) at the same time as good low-regularity convergence properties (cf. Section~{\ref{sec:convergence_analysis}}). 
\subsection{{Construction for general dispersive nonlinear equations}}\label{sec:RK_res_schemes_for_general_eqns}
{Let us now describe an extension of this construction to the general class of dispersive nonlinear equations described in \eqref{eqn:general_dispersive_nonlinear_eqn} and Assumption~\ref{assump:fourier_form_of_relevant_operators}. The construction will consist of the following three steps:
\begin{enumerate}[(I)]
    \item Change to the twisted variable in \eqref{eqn:general_dispersive_nonlinear_eqn};
    \item Determine the resonances in Duhamel's formula for the twisted variable and approximate the corresponding convolution kernel with low-regularity error;
    \item Define piecewise polynomial approximants of the unknowns in the interaction picture and collect terms to arrive at the RK resonance-based scheme.
\end{enumerate}
Let us now provide more details on each individual step of the aforementioned procedure.
\paragraph{(I) Twisted variable:} Analogously  to Section \ref{sec:RK_res_methods_for_KdV} we consider the twisted variable $$v(t,x)=\exp(-it\mathcal{L}(\nabla))u(t,x),$$ which satisfies
\begin{align}\label{eqn:twisted_formulation_of_general_eqn}
\begin{cases}
	i \partial_t v(t,x) =e^{-it\mathcal{L}(\nabla)}\vert \nabla\vert^\alpha \rho\left(e^{it\mathcal{L}(\nabla)}v(t,x), e^{-it\mathcal{L}(\nabla)}\overline v(t,x)\right), &\quad (t,x)\in [0,T]\times \mathbb{T}^d,\\
	v(0,x) = u_0(x),&\quad x\in\mathbb{T}^d.
\end{cases} 
\end{align}
The solution for \eqref{eqn:twisted_formulation_of_general_eqn} can then be written in the mild form
\begin{align}\label{eqn:Duhamel_forumla_general_dispersive}
	v(t_n+\tau)=v(t_n)-i\int_{0}^{\tau}e^{-i(t_n+s)\mathcal{L}(\nabla)}\vert \nabla\vert^\alpha \rho\!\left(e^{i(t_n+s)\mathcal{L}(\nabla)}v(t_n+s,x), e^{-i(t_n+s)\mathcal{L}(\nabla)}\overline v(t_n+s,x)\right)\dd s.
\end{align}
\paragraph{(II) Low-regularity kernel approximation:} In order to arrive at practical schemes (i.e. methods which can be efficiently implemented) while retaining good low-regularity approximation properties we switch to Fourier coordinates where \eqref{eqn:Duhamel_forumla_general_dispersive} becomes (for all $m\in\mathbb{Z}^d$ and $\tau>0$)
\begin{align}\label{eqn:Duhamel_forumla_general_dispersive_Fourier}
\hat{v}_{m}(t_n+\tau)=\hat{v}_m(t_n)-i|\nabla|^\alpha(m) \int_{0}^{\tau}e^{-i(t_n+s)\mathcal{L}(\nabla)(m)}\rho_m\left(e^{it\mathcal{L}(\nabla)}v(t_n+s,x), e^{-it\mathcal{L}(\nabla)}\overline v(t_n+s,x)\right)\dd s,
\end{align}
where $\mathcal{L}(\nabla)(m), \vert \nabla\vert^\alpha(m)$ are as in Assumption~\ref{assump:fourier_form_of_relevant_operators} and the polynomial $\rho$ takes the following form in Fourier coordinates 
\begin{align}\nonumber
    \rho(u,\bar{u})&=\sum_{m\in\mathbb{Z}^d}e^{im\cdot x}\rho_m(u,\bar{u}),\\\label{eqn:form_of_p_Fourier}
    \rho_m(u,\bar{u})&=\sum_{N+M\leq \deg p}a_{M,N}\sum_{m=\sum_ik_i-\sum_j\overline{k}_j} \prod_{i=1}^N \hat{v}_{k_i}(t_n+s)\prod_{j=1}^M\overline{\hat{u}}_{\bar{k}_j}(t).
\end{align}
\begin{example}[Cubic nonlinear Schr\"odinger equation]\label{ex:fourier_duhamel_cubic_NLSE}
	For the defocusing cubic nonlinear Schr\"odinger equation (cf. Example~\ref{ex:NLSE} with $\mu=1,p=1$) the formula \eqref{eqn:Duhamel_forumla_general_dispersive_Fourier} simplifies to the following expression
	\begin{align*}
		\hat{v}_m(t_n+\tau)=\hat{v}_m(t_n)-i\sum_{m+\bar{k}_1={k}_1+{k}_2}e^{it_n(m^2+\bar{k}_1^2-{k}_1^2-{k}_2^2)}\int_0^{\tau}e^{is(m^2+\bar{k}_1^2-{k}_1^2-{k}_2^2)}\overline{\hat{v}_{\bar{k}_1}^n} \hat{v}_{k_1}^n\hat{v}_{k_2}^n\dd s, \quad m\in\mathbb{Z}^d.
	\end{align*}
\end{example}
Using \eqref{eqn:form_of_p_Fourier}, we can then rewrite \eqref{eqn:Duhamel_forumla_general_dispersive_Fourier} in the following form:
\begin{align}\begin{split}\label{eqn:Duhamel_forumla_general_dispersive_Fourier_collected_coeffs}
    \hat{v}_{m}(t_n+\tau)&=\hat{v}_m(t_n)\\
    &\quad-i|\nabla|^\alpha(m)\!\!\sum_{N+M\leq \deg p}\!\!a_{M,N}\\&\quad\ \ \sum_{m=\sum_ik_i-\sum_j\overline{k}_j} \int_{0}^{\tau}e^{-i(t_n+s)\mathcal{P}^{(M,N)}(m,k_1,\dots, k_N,\bar{k}_1,\dots,\bar{k}_M)}\prod_{i=1}^N\hat{v}_{k_i}(t_n+s)\prod_{j=1}^M
    \overline{\hat{v}_{\bar{k}_j}(t_n+s)}\dd s.
    \end{split}
\end{align}
The nonlinear frequency interactions of \eqref{eqn:general_dispersive_nonlinear_eqn} are then given by the following kernels
\begin{align*}
    \mathcal{K}^{(M,N)}(t_n+s;m,k_1,\dots, k_N,\bar{k}_1,\dots,\bar{k}_M)=e^{-i(t_n+s)\mathcal{P}^{(M,N)}(m,k_1,\dots, k_N,\bar{k}_1,\dots,\bar{k}_M)}.
\end{align*}
The central step in the construction of low-regularity integrators (in prior work and in the current setting) is to then find an approximation of $\mathcal{K}^{(M,N)}$ which leads to a low-regularity error, while still allowing for efficient implementation of the nonlinearity in a time-stepping scheme. Based on the order $\lambda$ of the linear differential operator $\mathcal{L}$ (cf. Assumption~\ref{assump:fourier_form_of_relevant_operators}) this means that effectively an approximation $\mathcal{K}^{(M,N)}_{\sigma}\approx \mathcal{K}^{(M,N)}$ of order $\sigma$ is sought such that a uniform approximation of the following form holds
\begin{align}\begin{split}\label{eqn:kernel_approximation_general_dispersive_eqn}
   |\mathcal{K}^{(M,N)}(t_n+s;m,k_1,\dots, k_N,\bar{k}_1,\dots,&\bar{k}_M)-\mathcal{K}_\sigma^{(M,N)}(t_n+s;m,k_1,\dots, k_N,\bar{k}_1,\dots,\bar{k}_M)|\\&\leq Cs^\sigma \sum_{\alpha_j\leq \lambda_\sigma}|k_j|^{\alpha_j}, \quad \forall m, k_i,\bar{k}_j\in\mathbb{Z}^d,\text{\ with\ }m=\sum_{i=1}^N k_i-\sum_{j=1}^M\overline{k}_j,
   \end{split}
\end{align}
where $C$ is independent of $s$ and the Fourier indices, and the integer $\lambda_\sigma$ is such that $\lambda_\sigma<\sigma\lambda$. A further central aspect of this approximation is that the method should lead to efficient computations. In the setting of periodic boundary conditions, and hence pseudospectral methods, it suffices to choose $\mathcal{K}_\sigma$ such that the maps
\begin{align*}
   v\mapsto\sum_{m\in\mathbb{Z}^d}e^{im\cdot x}\!\!\!\!\!\! \sum_{m=\sum_ik_i-\sum_j\overline{k}_j}\!\!\!\!\!\!\!\!\tau^{-(p+1)} \!\int_{0}^{\tau}s^p\mathcal{K}_\sigma^{(M,N)}(t_n+s;m,k_1,\dots, k_N,\bar{k}_1,\dots,\bar{k}_M)\dd s\prod_{i=1}^N\hat{v}_{k_i}\prod_{j=1}^M\overline{\hat{v}_{\bar{k}_j}},
\end{align*}
for $v(x)=\!\sum_{k\in\mathbb{Z}^d}\!e^{ik\cdot x}\hat{v}_k$, have a simple representation in physical space (for further details on this point the reader is referred to \cite{ostermann2018low}. This property is achieved in the KdV equation by taking $\mathcal{K}_{\sigma}=\mathcal{K}$, however requires much more subtle considerations in other cases.
\begin{example}[Low-regularity kernel approximation for cubic NLSE]\label{ex:kernel_approx_nlse_ostermann_schratz} As shown in Example~\ref{ex:fourier_duhamel_cubic_NLSE} for the cubic NLSE we have
\begin{align}\label{eqn:cubic_NLSE_kernel}
    \mathcal{K}(t;k,\bar{k}_1,{k}_1,{k}_2)=e^{it(k^2+\bar{k}_1^2-{k}_1^2-{k}_2^2)}.
\end{align}
A low-regularity kernel approximation introduced in \cite{ostermann2018low} is
\begin{align*}
    \mathcal{K}_1(t;k,\bar{k}_1,{k}_1,{k}_2)=e^{it2\bar{k}_1^2},
\end{align*}
for which we have
\begin{align*}
  |\mathcal{K}(s;k,\bar{k}_1,{k}_1,{k}_2)-\mathcal{K}_1(s;k,\bar{k}_1,{k}_1,{k}_2)|&=2s\left(|\bar{k}_1\cdot {k}_1+\bar{k}_1\cdot {k}_2-{k}_1\cdot {k}_2|\right)|\varphi(2is(\bar{k}_1\cdot {k}_2+\bar{k}_1\cdot {k}_1-{k}_1\cdot {k}_2))|\\
  &\leq 2s\left(|\bar{k}_1||{k}_1|+|{k}_1||{k}_2|+|\bar{k}_1||{k}_2|\right),\\
  &\quad\quad\quad\quad\quad\quad\quad\quad \forall s>0, k,\bar{k}_1,{k}_1,{k}_2\in\mathbb{Z}^d, k={k}_1+{k}_2-\bar{k}_1.
\end{align*}
and the map
\begin{align*}
    v\mapsto \sum_{m\in\mathbb{Z}^d}e^{im\cdot x}\!\!\!\!\!\!\sum_{m=\sum_ik_i-\sum_j\overline{k}_j}\int_{0}^{\tau}\mathcal{K}_1(t_n+s;m,k_1,\bar{k}_1,\bar{k}_2)\dd s\hat{v}_{k_1}\overline{\hat{v}_{\bar{k}_1}}\overline{\hat{v}_{\bar{k}_2}},
\end{align*}
can be represented in physical space by
\begin{align*}
v\mapsto \tau v^2\varphi_1(-2i\tau\Delta)\overline{v},
\end{align*}
thus leading to fast computation in spectral methods (the action of the differential operators can be computed diagonally in Fourier coordinates and the polynomial nonlinearities can be computed as pointwise operations in physical space).
\end{example}
A broad and successful research effort over the recent decade has yielded a broad amount of works that facilitate precisely this construction of low-regularity approximations $\mathcal{K}^{(M,N)}_\sigma$ in a range of settings, in particular the following:
	\begin{itemize}
		\item \textbf{NLSE:} Ostermann \& Schratz \cite{ostermann2018low}, and Feng et al. \cite{fengmaierhoferschratz23};
        \item \textbf{Gross--Pitaevskii equation:} Alama Bronsard \cite{bronsard2022error};
		\item \textbf{KdV equation:} Hofmanov\'a \& Schratz \cite{hofmanova2017exponential}, and Li \& Wu \cite{li2022unfiltered};
		\item \textbf{Klein--Gordon equation:} Wang \& Zhao \cite{wang2022symmetric}, and Cabrera Calvo \cite{calvo2023uniformly};
		\item \textbf{Navier--Stokes equations:} Li et al. \cite{limaschratz2022};
  		\item \textbf{General dispersive nonlinear systems:} Bruned \& Schratz \cite{bruned_schratz_2022}.
\end{itemize}
We can use those kernel approximations as the starting point for our constructions of RK resonance-based schemes. In particular, using \eqref{eqn:Duhamel_forumla_general_dispersive_Fourier_collected_coeffs} we consider the approximation:
\begin{align}\begin{split}\label{eqn:Duhamel_forumla_general_dispersive_Fourier_collected_coeffs_kernel_approx}
    \hat{v}_{m}(t_n+\tau)&=\hat{v}_m(t_n)\\
    &\quad-i|\nabla|^\alpha(m)\!\!\sum_{N+M\leq \deg p}\!\!a_{M,N}\\&\ \sum_{m=\sum_ik_i-\sum_j\overline{k}_j} \int_{0}^{\tau}\mathcal{K}_\sigma^{(M,N)}(t_n+s;m,k_1,\dots, k_N,\bar{k}_1,\dots,\bar{k}_M)\prod_{i=1}^N\hat{v}_{k_i}(t_n+s)\prod_{j=1}^M
    \overline{\hat{v}_{\bar{k}_j}(t_n+s)}\dd s.
    \end{split}
\end{align}
\paragraph{(III) Runge--Kutta resonance-based schemes:} We continue to follow the steps outlined in Section~\ref{sec:RK_res_methods_for_KdV} and consider polynomial-type interpolants for the unknown terms in \eqref{eqn:Duhamel_forumla_general_dispersive_Fourier_collected_coeffs_kernel_approx}. We thus define the following maps
\begin{align}\begin{split}\label{eqn:nonlinear_maps_in_general_RK_schemes}
    v\mapsto \mathcal{F}_{p}^{[\mathcal{L},\rho]}(\tau;c_q;v)&:=-i|\nabla|^\alpha(m)\sum_{N+M\leq \deg p}a_{M,N}\sum_{m\in\mathbb{Z}^d}e^{im \cdot x}\\
    &\  \sum_{m=\sum_ik_i-\sum_j\overline{k}_j}\!\!\!\!\!\!\!\!\tau^{-(p+1)} \!\int_{0}^{\tau}s^p\mathcal{K}_\sigma^{(M,N)}(t_n+s;m,k_1,\dots, k_N,\bar{k}_1,\dots,\bar{k}_M)\dd s\prod_{i=1}^N\hat{v}_{k_i}\prod_{j=1}^M\overline{\hat{v}_{\bar{k}_j}}.
    \end{split}
\end{align}
Following the above remarks and the discussion for the KdV equation we twist back and define RK resonance-based schemes, motivated by Duhamel's formula \eqref{eqn:Duhamel_forumla_general_dispersive}, as follows:
\begin{align}\begin{split}\label{eqn:RK_res_based_schemes_general}
	u^{n+1}&=e^{i\tau \mathcal{L}(\nabla)}u^{n}+\tau\sum_{p,q,r=0}^{S} b^{p,q,r}e^{i\tau\mathcal{L}(\nabla)}K_{p,q,r},\\
	K_{p,q,r}&=\mathcal{F}^{[\mathcal{L},\rho]}_p(\tau; c_q; u^{n}+\tau \sum_{\tilde{p},\tilde{q},\tilde{r}=0}^{S} a_{p,q,r}^{\tilde{p},\tilde{q},\tilde{r}}K_{\tilde{p},\tilde{q},\tilde{r}}).\end{split}
\end{align}
\begin{example}[First order method {introduced in \cite{ostermann2018low}}] 
    Taking the kernel approximation from Example~\ref{ex:kernel_approx_nlse_ostermann_schratz}, i.e. $\mathcal{K}_1(t;k,\bar{k}_1,{k}_1,{k}_2)=e^{it2\bar{k}_1^2}$, we can recover the first order low-regularity integrator for the NLSE introduced in \cite{ostermann2018low} by taking $S=0, c_0=b^{0,0,0}=1,a_{0,0,0}^{0,0,0}=0$, leading to the scheme
    \begin{align*}
        	u^{n+1}&=e^{i\tau{\Delta} }u^{n}+\tau e^{i\tau{\Delta} }\mathcal{F}^{[NLSE]}_0(\tau; 1; u^{n}), 
    \end{align*}
    which corresponds to
    \begin{align*}
        u^{n+1}&=e^{i\tau{\Delta} }u^{n}-i\tau e^{i\tau{\Delta} }\left((u^n)^2\varphi_1(-2i\tau\Delta)\overline{u^n}\right).
    \end{align*}
\end{example}
\begin{example}[Symmetric method from {\cite{AlamaBronsard23,banicamaierhoferschratz22}}] We now consider the piecewise kernel approximation for the cubic NLSE introduced in \cite[(1.8)]{AlamaBronsard23}:
\begin{align*}
    \mathcal{K}(t;k,\bar{k}_1,{k}_1,{k}_2)=e^{it(k^2+\bar{k}_1^2-{k}_1^2-{k}_2^2)}\approx\begin{cases}
        e^{2it\bar{k}_1^2},& s\in[0,\tau/2],\\
        e^{2it\bar{k}_1^2}e^{i\tau(k^2-\bar{k}_1^2-{k}_1^2-{k}_2^2)},& s\in(\tau/2,\tau].
    \end{cases}
\end{align*}
Then, if we take $S=1,c_0=\dfrac{1}{2},c_1=1,b^{0,0,0}=a_{0,0,1}^{0,0,0}=a_{0,1,1}^{0,0,0}=1,b^{0,0,1}=a_{0,0,1}^{0,0,1}=a_{0,0,1}^{0,1,1}=-1,b^{0,1,1}=a_{0,0,1}^{0,1,1}=a_{0,1,1}^{0,1,1}=2$ and all other coefficients $a_{p,q,r}^{\tilde{p},\tilde{q},\tilde{r}},b^{p,q,r}$ to be equal to zero, we obtain the following method
\begin{align*}
    u^{n+1}&=e^{i\tau\Delta}u^n+\tau e^{i\tau\Delta}K_{0,0,0}+2\tau e^{i\tau\Delta}K_{0,1,1}-\tau e^{i\tau\Delta}K_{0,0,1},\\
    K_{0,0,0}&=\mathcal{F}_{0}^{[NLSE]}(\tau;\dfrac{1}{2};u^n),\\
    K_{0,0,1}&=\mathcal{F}_{0}^{[NLSE]}(\tau;\dfrac{1}{2};u^n+\tau K_{0,0,0}+2\tau K_{0,1,1}-\tau K_{0,0,1}),\\
    K_{0,1,1}&=\mathcal{F}_{0}^{[NLSE]}(\tau;1;u^n+\tau K_{0,0,0}+2\tau K_{0,1,1}-\tau K_{0,0,1}).
\end{align*}
which is equivalent to
\begin{align*}
   u^{n+1}&=e^{i\tau\Delta}u^n+\tau e^{i\tau\Delta}\left(\mathcal{F}_{0}^{[NLSE]}\left(\tau;\dfrac{1}{2};u^n\right)+2\mathcal{F}_{0}^{[NLSE]}\left(\tau;1;e^{-i\tau\Delta}u^{n+1}\right)\right.\\&\quad\quad\quad\quad\quad\quad\quad\quad\quad\quad\quad\quad\quad\quad\quad\quad\quad\quad\quad\quad\quad\quad\quad\quad\left.-\mathcal{F}_{0}^{[NLSE]}\left(\tau;\dfrac{1}{2};e^{-i\tau\Delta}u^{n+1}\right)\right)\\
   &=e^{i\tau\Delta}u^n-i\frac{\tau}{2} e^{i\tau\Delta}\left((u^n)^2\varphi_1(-i\tau\Delta)\overline{u^n}\right)-i\frac{\tau}{2} \left((u^{n+1})^2\varphi_1(i\tau\Delta)\overline{u^{n+1}}\right).
\end{align*}
This is precisely the method described in \cite{AlamaBronsard23,banicamaierhoferschratz22}.
\end{example}
Further examples of RK resonance-based methods are provided in Section~\ref{sec:examples_of_symplectic_integrators}.}
\section{Symplectic Runge--Kutta resonance-based schemes}\label{sec:symplectic_RK_res_based_schemes}
{Following the above general construction of RK resonance-based schemes we now focus on two Hamiltonian dispersive PDEs, the KdV equation and NLSE and use the aformentioned approach to design symplectic low-regularity integrators. Indeed both are separable infinite-dimensional Hamiltonian systems that can be written in the abstract Hamiltonian form}
\begin{align*}
\frac{\dd p_a}{\dd t}=-\frac{\partial}{\partial q_a}\mathcal{H}(\mathbf{p},\mathbf{q}),\quad \frac{\dd q_a}{\dd t}=\frac{\partial}{\partial p_a}\mathcal{H}(\mathbf{p},\mathbf{q}), \quad a\in \mathcal{I},
\end{align*}
where the index set $\mathcal{I}$ is $\mathbb{N}$ (for the KdV case) and $\mathbb{Z}^d$ (for the NLSE case). The coordinates and Hamiltonians as well as their quadratic first integrals are recalled in broad terms below, but for further details the reader is referred to standard literature such as \cite{marsden2002introduction}. {Of course, such systems automatically preserve the energy, meaning in particular that $\mathcal{H}(\mathbf{p}(t),\mathbf{q}(t)),t\geq 0,$ is a constant of motion for both equations.}
In order to understand the convergence properties of sums appearing in these discrete formulations it will be convenient to introduce the following Hilbert spaces:
\begin{definition}
For $s\in\mathbb{R}_{+}$, we define for a sequence $\mathbf{v}=\left(v_m\right)_{m\in\mathcal I}$ the norm
\begin{align*}
	\|\mathbf{v}\|_{l^{2}_s}:=\left(\sum_{m\in\mathcal{I}}\langle m\rangle^{2s}|v_m|^2\right)^2
\end{align*}
and we define the space $l_s^2$ by
\begin{align*}
	l_s^2:=\left\{\mathbf{v}\in l^2(\mathbb{Z}\setminus\{0\})\,\vert\, \|\mathbf{v}\|_{l^2_s}<\infty\right\}.
\end{align*}
\end{definition}
The above spaces are isometric to the classical Sobolev spaces $H^s$, $s>0$.
\subsection{Overview of the geometric structure of the KdV equation}
The KdV equation is completely integrable and has an infinite set of first integrals \cite[Section~3.1]{dunajski2010solitons}. In the present {section} {we will mainly focus on the conservation of} its quadratic first integral, the momentum
\begin{align}\label{eqn:quadratic_first_integral_kdv}
I_0^{[KdV]}[u]=\int_{\mathbb{T}}u^2\dd x.
\end{align}
In order to study structure preservation properties of our methods it will be convenient to look at the following infinite-dimensional Hamiltonian formulation of the KdV equation \cite[{Section~2}]{guan2014kdv}. Let us {define} $$p_a(t):=\hat{u}_a(at),\quad q_a(t):=\hat{u}_{-a}(at) \quad\text{ for }\quad a\geq 1.$$  Then the KdV equation \eqref{eqn:standard_IVP_KdV} is equivalent to the following infinite-dimensional Hamiltonian system
\begin{align*}
\frac{\dd p_a}{\dd t}=-\frac{\partial}{\partial q_a}\mathcal{H}_1(\mathbf{p},\mathbf{q}),\quad \frac{\dd q_a}{\dd t}=\frac{\partial}{\partial p_a}\mathcal{H}_1(\mathbf{p},\mathbf{q}),
\end{align*}
where the \textit{time-independent} Hamiltonian $\mathcal{H}_1$ is given by
\begin{align*}
\mathcal{H}_1(\mathbf{p},\mathbf{q})=-i\sum_{{c\in\mathbb{N}}}c^2p_cq_c-\frac{i}{2}\sum_{\substack{{a,b,c\in\mathbb{N}}\\a+b-c=0}}(p_ap_bq_c+q_aq_bp_c).
\end{align*}
The corresponding symplectic form is given by
\begin{align}\label{eqn:symplectic_form_KdV}
	\omega=\sum_{a=1}^\infty \dd p_a\wedge \dd q_a.
\end{align}

With the aid of the Cauchy-Schwarz inequality one can in particular show that whenever $(\mathbf{p},\mathbf{q})\in l^{2}_s,$ i.e. whenever $u\in H^{s}, s>3/2$, all of the above sums converge absolutely and the Hamiltonian and the symplectic form as given above are well-defined.

For further details on the Hamiltonian structure of the KdV equation and the local well-posedness of the KdV equation in low-regularity regimes we refer the reader to \cite{bourgain1993,gubinelli2012rough,iandoli2022,tao2006nonlinear}.

\subsection{Overview of the geometric structure of the nonlinear Schr\"odinger equation}\label{sec:structure_of_NLSE}
	The polynomial NLSE \eqref{eqn:Cauchy_problem_NLS} on $\mathbb{T}^d$ is a separable infinite-dimensional Hamiltonian system with (complex) Hamiltonian coordinates (cf. \cite[Section~III.1]{faou2012geometric} and \cite[Section~3.2]{marsden2002introduction}) $\xi_a=\hat{u}_a,\eta_a=\overline{\hat{u}_a}$ and (imaginary-valued) Hamiltonian
	\begin{align*}
		\mathcal{H}(\xi,\eta)=i\sum_{a\in\mathbb{Z}^d}|a|^2\xi_a\eta_a+i\frac{\mu}{2(p+1)}\sum_{\substack{a_i,b_i\in\mathbb{Z}^d\\a_1+\dots+a_{p+1}=b_1+\dots+b_{p+1}}}\xi_{a_1}\cdots\xi_{a_{p+1}}\eta_{b_1}\cdots\eta_{b_{p+1}},
	\end{align*}
such that, formally speaking,
\begin{align}\label{eqn:Hamiltonian_structure_NLS}
	\frac{\dd\xi_a}{\dd t}=-\frac{\partial\mathcal{H}}{\partial \eta_a}(\xi,\eta),\quad	\frac{\dd\eta_a}{\dd t}=\frac{\partial\mathcal{H}}{\partial \xi_a}(\xi,\eta),\quad\forall a\in\mathbb{Z}^d.
\end{align}
In addition, it possesses the following quadratic first integral
\begin{align}\label{eqn:quadratic_first_integral_nlse}
I_0^{[NLSE]}[u]=\int_{\mathbb{T}}|u|^2\dd x.
\end{align}
The corresponding symplectic form is given by \begin{align}\label{eqn:symplectic_form_NLSE}
\omega=\sum_{a\in\mathbb{Z}}\dd\xi_a\wedge\dd\eta_a.
\end{align}

As mentioned in Section~\ref{sec:construction_of_rk_res_schemes}, a central step in the construction of our symplectic low-regularity schemes will be to consider the twisted variable $v=\exp(-it{\partial_x^2})u$ which satisfies
\begin{align}\label{eqn:twisted_NLS}
	i\partial_t v(t)=\mu e^{-it{\partial_x^2}}\left[\left|e^{it{\partial_x^2}}v(t)\right|^{2p}e^{it{\partial_x^2} }v(t)\right].
\end{align}
The twisted equation \eqref{eqn:twisted_NLS} is a Hamiltonian system of the structure \eqref{eqn:Hamiltonian_structure_NLS} with Hamiltonian coordinates
$\xi^{(v)}_a=\hat{v}_a,\eta^{(v)}_a=\overline{\hat{v}_a}$ and \textit{time-dependent} Hamiltonian
\begin{align}\nonumber
	{\mathcal{H}}^{(v)}(t;\xi^{(v)},\eta^{(v)})&=\int_{\mathbb{T}^d}\left|e^{it{\partial_x^2}}u\right|^{2(p+1)}\dd x\\\label{eqn:Hamiltonian_twisted}
	&=\sum_{\substack{a_i,b_i\in\mathbb{Z}^d\\a_1+\dots+a_{p+1}=b_1+\dots+b_{p+1}}}c^{(v)}_{a_1,\dots,a_{p+1},b_1,\dots,b_{p+1}}(t)\xi_{a_1}^{(v)}\cdots\xi_{a_{p+1}}^{(v)}\eta_{b_1}^{(v)}\cdots\eta_{b_{p+1}}^{(v)},
\end{align}
where
\begin{align*}
	c^{(v)}_{a_1,\dots,a_{p+1},b_1,\dots,b_{p+1}}(t)=\frac{\mu}{p+1}\exp\left(-it\sum_{l=1}^{p+1}a_l^2+it\sum_{l=1}^{p+1}b_l^2\right).
\end{align*}
Again, if $\bm{\xi},\bm{\eta}\in l^2_s, s>3/2$ all of the above sums converge. For further details on the properties of the NLSE the interested reader is referred to \cite{ablowitz2011nonlinear,tao2006nonlinear}.
\subsection{Symplectic approximations to nonlinear frequency interactions}\label{sec:symplectic_kernel_approximations}
{We note that the approximation of the kernels $\mathcal{K}^{(M,N)}$ in \eqref{eqn:Duhamel_forumla_general_dispersive_Fourier_collected_coeffs} was central to the construction of low-regularity integrators. In order to permit our Runge--Kutta resonance-based methods ultimately to preserve the symplectic structure of the underlying Hamiltonian formulation we have to ensure this approximation originates in another Hamiltonian system with the same symplectic structure. Since for the KdV equation (cf. Section~\ref{sec:RK_res_methods_for_KdV}) no kernel approximation was necessary, this trivially preserves the symplectic structure. However, for the NLSE we have to introduce} a novel low-regularity kernel approximation in Duhamel's formula which respects the symplectic structure of the original equation. In the interest of notational simplicity we focus in the following on the case of the cubic NLSE (i.e. $p=1$ in \eqref{eqn:Cauchy_problem_NLS}) however we note that the construction can be carried out analogously in the general case $p\in\mathbb{N}$ {(see Remark~\ref{rmk:higher_degree_nonlinearities}). While this construction can be performed also in the case $d>1$ the significant low-regularity advantage is only achieved when $d=1$ and thus throughout this section we focus on the case $d=1$.}

{To begin with, we recall that the exact Duhamel kernel in the cubic NLSE is given by \eqref{eqn:cubic_NLSE_kernel}
\begin{align*}
    \mathcal{K}(t;k,k_1,{k}_2,{k}_3)=e^{it(k^2+k_1^2-{k}_2^2-{k}_3^2)}=e^{-2iskk_1+2isk_2k_3},\quad\text{whenever}\ k=k_2+k_3-k_1,
\end{align*}
where for notational convenience we choose to denote the indices $k_1,k_2,k_3$ associated with $\overline{v_{k_1}},\hat{v}_{k_2},\hat{v}_{k_3}$ respectively throughout this section. A simple approximation which preserves the symplectic structure can be obtained from the following} observation:
\begin{align}\begin{split}\label{eqn:second_order_kernel_approx_NLSE}	e^{-2iskk_1+2isk_2k_3}&=e^{-2iskk_1}+e^{2isk_2k_3}-1+\left(e^{-2iskk_1}-1\right)\left(e^{2isk_2k_3}-1\right)\\
 &\approx e^{-2iskk_1}+e^{2isk_2k_3}-1=:\mathcal{K}_{2}(s;k,k_1,k_2,k_3),\end{split}
\end{align}
which incurs a local error of low regularity in the following sense:
{\begin{align}\begin{split}\label{eqn:local_error_second_order_kernel_approx_NLSE}
\left|e^{-2iskk_1+2isk_2k_3}-\mathcal{K}_{2}(s;k,k_1,k_2,k_3)\right|=\left|e^{-2iskk_1}-1\right|\left|e^{2isk_2k_3}-1\right|\leq &s^rC_r|k k_1k_2k_3|^{\frac{r}{2}},\\
&\quad \forall\,0\leq r\leq2, s\in [0,\infty),\end{split}
\end{align}}
for some constants $C_r$ depending only on $r$. We will see how we can construct similar approximations of higher order in {Section}~\ref{sec:symplectic_kernel_approximations_NLSE} below. The approximation \eqref{eqn:second_order_kernel_approx_NLSE} preserves the Hamiltonian formulation in the sense that the expression
{\begin{align*}
\hat{v}_k(t_n+\tau)&=\hat{v}_k(t_n)-i\mu\!\!\!\!\!\!\!\!\sum_{k+k_1=k_2+k_3}\!\!\!\!\!\!\!\!e^{it_n(k^2+k_1^2-k_2^2-k_3^2)}\int_0^{\tau}\mathcal{K}_{2}(s;k,k_1,k_2,k_3)\overline{\hat{v}_{k_1}(t_n+s)} \hat{v}_{k_2}(t_n+s)\hat{v}_{k_3}(t_n+s)\dd s
\end{align*}}
is exactly Duhamel's formula arising from the following piecewise regular infinite-dimensional ODE system which we will see to have similar properties as the NLSE \eqref{eqn:twisted_NLS}
\begin{align}\label{eqn:approximate_ODE_system}
    \frac{\dd}{\dd t} \hat{v}_k(t)=(-i\mu)\sum_{k+k_1=k_2+k_3}e^{it_n(k^2+k_1^2-k_2^2-k_3^2)}\mathcal{K}_{2}(t-t_n;k,k_1,k_2,k_3)\overline{\hat{v}_{k_1}(t)} \hat{v}_{k_2}(t)\hat{v}_{k_3}(t),\ \text{for\ }t\in[t_n,t_{n+1}].
\end{align}
Indeed, from this expression we observe immediately two central properties: Firstly, the normalisation {\eqref{eqn:quadratic_first_integral_nlse} is also} preserved in the approximate system \eqref{eqn:approximate_ODE_system} for any $\hat{v}_k\in l_{s}^2,$ some $s>1/2$, since:
\begin{align*}
    \frac{\dd}{\dd t}\sum_{k\in\mathbb{Z}}\overline{\hat{v}_k}\hat{v}_k&=2\re\left[\sum_{k\in\mathbb{Z}}(-i\mu)\sum_{k+k_1=k_2+k_3}e^{it_n(k^2+k_1^2-k_2^2-k_3^2)}\mathcal{K}_{2}(t-t_n;k,k_1,k_2,k_3)\overline{\hat{v}_k(t)}\overline{\hat{v}_{k_1}(t)} \hat{v}_{k_2}(t)\hat{v}_{k_3}(t)\right]\\
    &=0,
\end{align*}
where the final equality follows immediately from the symmetry
\begin{align}\label{eqn:central_symmetry_kernel_approx_order_2}
    \mathcal{K}_{2}(s;k,k_1,k_2,k_3)=\overline{\mathcal{K}_{2}(s;k_2,k_3,k,k_1)}.
\end{align}
Secondly, as a result of the same symmetry, the ODE system \eqref{eqn:approximate_ODE_system} can also be reformulated as a piecewise infinite-dimensional Hamiltonian system with the same coordinates and symplectic form as introduced in Section~\ref{sec:structure_of_NLSE}:
$\xi^{(v)}_a=\hat{v}_a,\eta^{(v)}_a=\overline{\hat{v}_a}$ and \textit{time-dependent} Hamiltonian
\begin{align}\nonumber
	{\mathcal{H}}^{(v)}(t;\xi^{(v)},\eta^{(v)})&=(-i\mu)\sum_{\substack{k,k_1,k_2,k_3\in\mathbb{Z}^d\\k+k_1=k_2+k_3}}e^{it_n(k^2+k_1^2-k_2^2-k_3^2)}\mathcal{K}_{2}(t-t_n;k,k_1,k_2,k_3)\overline{\hat{v}_k(t)}\xi_{k_2}^{(v)}\xi_{k_3}^{(v)}\eta_{k}^{(v)}\eta_{k_1}^{(v)},
\end{align}
meaning in particular that the symplectic form \eqref{eqn:symplectic_form_NLSE} is preserved over each interval $[t_n,t_n+\tau]$ and thus, by continuity, globally. This justifies the kernel approximation \eqref{eqn:second_order_kernel_approx_NLSE} as a suitable choice and we {can proceed to design RK resonance-based schemes with symplecticity-preserving kernel approximations using $\mathcal{K}_2$ and the construction in \eqref{eqn:RK_res_based_schemes_general}. Note, for a general choice of coefficients $a_{p,q,r}^{\tilde{p},\tilde{q},\tilde{r}},b^{p,q,r}$ these schemes are not necessarily symplectic, but indeed we will provide, in Theorem~\ref{thm:preservation_of_symplectic_form} sufficient conditions on the coefficients for this to be the case.} {We note that the following maps appear in the construction of these RK resonance-based schemes (cf. \eqref{eqn:nonlinear_maps_in_general_RK_schemes}):}
{\begin{align}\nonumber
   v\mapsto &\mathcal{F}^{[NLSE,2]}_p(\tau ;c_q;v)\\\nonumber&\hspace{0.29cm}:={-i\mu}\sum_{k\in\mathbb{Z}}e^{ix k}\sum_{k+k_1=k_2+k_3}\frac{1}{\tau^{p+1}}\int_{0}^{c_q\tau}\left[e^{-2iskk_1}+e^{2isk_2k_3}-1\right] s^{p}\dd s\overline{\hat{v}_{k_1}}\hat{v}_{k_2}\hat{v}_{k_3}\\\label{eqn:central_integrals_in_NLSE_RK_construction_order_2}
   &\quad={-i\mu}\sum_{k\in\mathbb{Z}}e^{ix k}\sum_{k+k_1=k_2+k_3}c_q^{p+1} \left[\varphi_{p+1}(-2ic_q\tau kk_1)+\varphi_{p+1}(2ic_q\tau k_2k_3)-\frac{1}{p+1}\right]\overline{\hat{v}_{k_1}}\hat{v}_{k_2}\hat{v}_{k_3}.
\end{align}}
The final important aspect of the kernel approximation \eqref{eqn:second_order_kernel_approx_NLSE} is that the functionals $\mathcal{F}^{[NLSE,2]}_p(\tau; c_q;v)$ have exact representations in physical space meaning we can use FFT based methods to compute their action on a spectral discretisation, for example:
\begin{align*}
    {(-i\mu)^{-1}}&\mathcal{F}^{[NLSE,2]}_0(\tau ;c_q;v)\\
    &=\frac{i}{2\tau}e^{-ic_q\tau \partial_x^2}\partial_x^{-1}\left(\left[e^{-ic_q\tau \partial_x^2}\overline{\partial_x^{-1}v}\right]\left[e^{ic_q\tau \partial_x^2}\left(v\right)^2\right]\right)-\frac{i}{2\tau}\partial_x^{-1}\left(\overline{\partial_x^{-1}v}\left(v\right)^2\right)\\&\quad+\frac{i}{2\tau}\overline{v}e^{-ic_q\tau \partial_x^2}\left(e^{ic_q\tau \partial_x^2}\partial_x^{-1}v\right)^2-\frac{i}{2\tau}\overline{v}\left(\partial_x^{-1}v\right)^2-c_q |v|^2v\\
    &\quad {+c_q^{2}\left(\int_{\mathbb{T}}|v|^2v\dd x+\overline{\hat{v}_0}v^2-\overline{\hat{v}_0}\int_{\mathbb{T}}\left(v\right)^2\dd x+2\hat{v}_0|v|^2-\left(\hat{v}_0\right)^2\overline{v}\right),}\\
    {(-i\mu)^{-1}}&\mathcal{F}^{[NLSE,2]}_1(\tau ;c_q;v)\\&= -\frac{ic_q}{2\tau}e^{-ic_q\tau \partial_x^2}\partial_x^{-1}\left(\left[e^{-ic_q\tau \partial_x^2}\overline{\partial_x^{-1}v}\right]\left[e^{ic_q\tau \partial_x^2}\left(v\right)^2\right]\right)\\
    &\quad+\frac{1}{4\tau^2}e^{-ic_q\tau \partial_x^2}\partial_x^{-2}\left(\left[e^{-ic_q\tau \partial_x^2}\overline{\partial_x^{-2}v}\right]\left[e^{ic_q\tau \partial_x^2}\left(v\right)^2\right]\right)-\frac{1}{4\tau^2}\partial_x^{-2}\left(\overline{\partial_x^{-2}v}\left(v\right)^2\right)\\
    &\quad+\frac{ic_q}{2\tau}\overline{v}e^{-ic_q\tau \partial_x^2}\left(e^{ic_q\tau \partial_x^2}\partial_x^{-1}v\right)^2+\frac{1}{4\tau^2}\overline{v}e^{-ic_q\tau \partial_x^2}\left(e^{ic_q\tau \partial_x^2}\partial_x^{-2}v\right)^2-\frac{1}{4\tau^2}\overline{v}\left(\partial_x^{-2}v\right)^2\\
    &\quad -\frac{1}{2}c_q^2 |v|^2v{+\frac{c_q^{3}}{2}\left(\int_{\mathbb{T}}|v|^2v\dd x+\overline{\hat{v}_0}v^2-\overline{\hat{v}_0}\int_{\mathbb{T}}\left(v^n\right)^2\dd x+2\hat{v}_0|v|^2-\left(\hat{v}_0\right)^2\overline{v}\right).}\\
\end{align*}
\begin{remark}\label{rmk:need_for_higher_order_kernel_approximations}
    However, we note that by virtue of the estimate \eqref{eqn:local_error_second_order_kernel_approx_NLSE} the approximation \eqref{eqn:second_order_kernel_approx_NLSE} necessarily incurs a local error of order {$\mathcal{O}(\tau^3)$} in the Duhamel formula, meaning any numerical scheme designed based on this formula can have global convergence order no more than {$\mathcal{O}(\tau^2)$}. Thus we need to find a structured way of performing kernel approximations with comparable properties to arrive at {higher order methods}.
\end{remark}

\subsubsection{Systematic {higher order} symplectic kernel approximations for the NLSE}\label{sec:symplectic_kernel_approximations_NLSE}
We note that the above construction of structure respecting kernel approximation is entirely new in the construction of low-regularity integrators and differs significantly from prior work (e.g. in \cite{ostermann2018low, bruned_schratz_2022}). {However, as mentioned in Remark~\ref{rmk:need_for_higher_order_kernel_approximations}, when using \eqref{eqn:central_symmetry_kernel_approx_order_2} in the construction of a Runge--Kutta resonance-based scheme, the resulting method can be at most of second order in $\tau$. Thus let us now describe how we can construct higher order symplecticity-preserving approximations to the kernel function $\exp(-2iskk_1+2isk_2k_3)$ to ultimately attain Runge--Kutta resonance-based schemes of higher order.} Let us fix $\sigma\in\mathbb{N}$ choose a set of distinct interpolation points $0\leq \gamma_1<\dots<\gamma_{\sigma}\leq 1$ (for example Clenshaw--Curtis points \cite{clenshaw1960method,trefethen2019approximation}) and define by $\mathcal{P}_\sigma[f]$ the unique interpolating polynomial of degree $\sigma-1$ which matches the function values of $f$ at the points $\tau \gamma_j, j=1,\dots, \sigma$, i.e.
\begin{align*}
    \mathcal{P}_\sigma[f](\tau \gamma_j)=f(\tau \gamma_j), \ j=1,\dots, \sigma.
\end{align*}
Let us consider the kernel approximation $\exp(-2is kk_1+2isk_2k_3)\approx\mathcal{K}_{2\sigma}(s;k,k_1,k_2,k_3)$ where
\begin{align}\begin{split}\label{eqn:general_stable_kernel_approx}
    \mathcal{K}_{2\sigma}(s;k,k_1,k_2,k_3)&:=e^{-2iskk_1}\mathcal{P}_\sigma[\exp(2i\,\cdot\,k_2k_3)]+e^{2isk_2k_3}\mathcal{P}_\sigma[\exp(-2i\,\cdot\,kk_1)]\\&\quad\quad\quad\quad\quad\quad\quad\quad\quad\quad\quad\quad-\mathcal{P}_\sigma[\exp(2i\,\cdot\,k_2k_3)]\mathcal{P}_\sigma[\exp(-2i\,\cdot\,kk_1)],\end{split}
\end{align}
which immediately results in a stable numerical scheme since all coefficients that appear are of the form $\exp(is \omega), \omega\in\mathbb{R},$ and therefore uniformly bounded. Moreover, the approximations $\mathcal{K}_{2\sigma}$ have the central symmetry
\begin{align}\label{eqn:central_symmetry_kernel_approx}
\mathcal{K}_{2\sigma}(s;k,k_1,k_2,k_3)=\overline{\mathcal{K}_{2\sigma}(s;k_2,k_3,k,k_1)},
\end{align}
which analogously to \eqref{eqn:central_symmetry_kernel_approx_order_2} immediately implies that the approximated Duhamel formula arises under a modified flow which exactly preserves the symplectic form and quadratic first integral (the normalisation). In addition, we can estimate the local error of this approximation {as follows.}
{\begin{proposition}\label{prop:local_error_higher_order_symplectic_kernel_approx} The approximation \eqref{eqn:general_stable_kernel_approx} is such that for any {measurable function $g:[0,\tau]\rightarrow\mathbb{C}$ and} any $d\in\mathbb{N},\beta\in[0,2d]$:
    \begin{align*}
        \left|\int_{0}^{\tau} e^{-2is kk_1+2isk_2k_3} g(s)\dd s-\int_{0}^{\tau} \mathcal{K}_{2\sigma}(s;k,k_1,k_2,k_3)g(s)\dd s\right|\leq \tau^{1+2\beta} |kk_1k_2k_3|^{\beta}\sup_{s\in[0,\tau]}|g(s)|
    \end{align*}
\end{proposition}}
{This is} based on the following standard approximation result:
\begin{lemma}[{Theorem 4.2 in \cite{powell1981approximation}}]
If $f\in\mathcal{C}^{(\sigma)}([0,\tau])$, then for every $s\in[0,\tau]$ there is a $\theta_s\in[0,\tau]$ such that
\begin{align*}
    f(s)-P_\sigma[f](s)=\frac{f^{(\sigma)}(\theta_s)}{\sigma!}\Pi_{j=1}^{\sigma}(s-\gamma_j).
\end{align*}
\end{lemma}
Note this implies the following simple consequence by interpolation:
\begin{cor}\label{cor:refined_interpolation_estimate}
    Suppose $\{f_\alpha(s)\}_{\alpha\in A}$ is a family of $\mathcal{C}^\sigma$ functions which are uniformly bounded in their function values and $\sigma^{th}$ derivatives in the form
    \begin{align*}
    |f^{(j)}_\alpha(s)|\leq C_{j,\alpha},\, j=0,\sigma, 
    \end{align*}
    for some constants $C_{j,\alpha}>0$, then we have the following bound for all $s\in[0,\tau]$ and all $\beta\in[0,\sigma]$:
    \begin{align*}
        |f_\alpha(s)-P_\sigma[f_\alpha](s)|\leq \tau^\beta \left(\frac{C_{\sigma,\alpha}}{\sigma!}\right)^{\beta}C_{0,\alpha}^{1-\beta}.
    \end{align*}
\end{cor}
\begin{proof}[{Proof of Proposition~\ref{prop:local_error_higher_order_symplectic_kernel_approx}}] We observe the simple identity
    \begin{align*}
    e^{-2is kk_1+2isk_2k_3}&=e^{-2iskk_1}\mathcal{P}_\sigma[\exp(2i\,\cdot\,k_2k_3)](s)+e^{2isk_2k_3}\mathcal{P}_\sigma[\exp(-2i\,\cdot\,kk_1)](s)\\
    &\quad\quad\quad -\mathcal{P}_\sigma[\exp(2i\,\cdot\,k_2k_3)](s)\mathcal{P}_\sigma[\exp(-2i\,\cdot\,kk_1)](s)\\
    &\quad\quad\quad+\left(e^{-2iskk_1}-\mathcal{P}_\sigma[\exp(-2i\,\cdot\,kk_1)](s)\right)\left(e^{2isk_2k_3}-\mathcal{P}_\sigma[\exp(2i\,\cdot\,k_2k_3)](s)\right).
\end{align*}
This identity combined with {Corollary~\ref{cor:refined_interpolation_estimate}} immediately yields the desired estimate.
\end{proof}

{As above, in the RK resonance-based schemes we then fix $\sigma\in\mathbb{N}$ and consider the following nonlinear operators:}
\begin{align}\label{eqn:central_integrals_in_NLSE_RK_construction}
	\mathcal{F}^{[NLSE,2\sigma]}_p(\tau;c_q;v)=(-i\mu)\sum_{k\in\mathbb{Z}}e^{ixk}\sum_{k+k_1=k_2+k_3}\frac{1}{\tau^{p+1}}\int_0^{c_q\tau}\mathcal{K}_{2\sigma}(s;k,k_1,k_2,k_3)s^p\dd s  \overline{\hat{v}_{k_1}} \hat{v}_{k_2}\hat{v}_{k_3}.
\end{align}
We note that the integrals appearing in the definition of $\mathcal{F}^{[NLSE,2\sigma]}_p(\tau;c_q;v)$ result in terms of the form
\begin{align}\label{eqn:general_expression_F_NLSE_p_1}
    \sum_{k+k_1=k_2+k_3}\frac{1}{\tau}e^{2i\gamma_j\tau k_2k_3}\varphi_{l}(-2ic_q\tau kk_1) \overline{\hat{v}_{k_1}} \hat{v}_{k_2}\hat{v}_{k_3},\  \text{and}\ 
    \sum_{k+k_1=k_2+k_3}\frac{1}{\tau}e^{-2i\gamma_j\tau kk_1}\varphi_{l}(2ic_q\tau k_2k_3) \overline{\hat{v}_{k_1}} \hat{v}_{k_2}\hat{v}_{k_3},
\end{align}
for some $l\in\mathbb{N},\gamma_j,c_q\in\mathbb{R}$. All of these terms have an expression in physical space, meaning the action of $\mathcal{F}^{[NLSE,2\sigma]}_p(\tau;c_q;v)$ can be computed efficiently using FFT based methods for a spectral spatial discretisation. An example of this is the term $T^{(1)}$, given by 
\begin{align*}
     \hat{T}^{(1)}_k=\sum_{k+k_1=k_2+k_3}\frac{1}{\tau}e^{2i\gamma_j\tau k_2k_3}\varphi_{1}(-2ic_q\tau kk_1) \overline{\hat{v}_{k_1}} \hat{v}_{k_2}\hat{v}_{k_3},
\end{align*}
which can be written in the form
\begin{align*}
     \hat{T}^{(1)}_k&=\sum_{\substack{k+k_1=k_2+k_3\\k,k_1\neq0}}\frac{1}{\tau}e^{i\gamma_j\tau (k_2+k_3)^2}e^{-i\gamma_j\tau k_2^2}e^{-i\gamma_j\tau k_3^2}\frac{(-i)}{2c_q\tau (ik)(ik_1)}\left(e^{-i\tau c_q (k_2+k_3)^2}e^{i\tau c_q k^2}e^{i\tau c_qk_1^2}-1\right)\overline{\hat{v}_{k_1}} \hat{v}_{k_2}\hat{v}_{k_3}\\
     &\quad+ \sum_{\substack{k+k_1=k_2+k_3\\k=0}}\frac{1}{\tau}e^{i\gamma_j\tau (k_2+k_3)^2}e^{-i\gamma_j\tau k_2^2}e^{-i\gamma_j\tau k_3^2}\overline{\hat{v}_{k_1}} \hat{v}_{k_2}\hat{v}_{k_3}\\
      &\quad+ \sum_{\substack{k+k_1=k_2+k_3\\k_1=0}}\frac{1}{\tau}e^{i\gamma_j\tau (k_2+k_3)^2}e^{-i\gamma_j\tau k_2^2}e^{-i\gamma_j\tau k_3^2}\overline{\hat{v}_{k_1}} \hat{v}_{k_2}\hat{v}_{k_3}\\
       &\quad- \sum_{\substack{k+k_1=k_2+k_3\\k=k_1=0}}\frac{1}{\tau}e^{i\gamma_j\tau (k_2+k_3)^2}e^{-i\gamma_j\tau k_2^2}e^{-i\gamma_j\tau k_3^2}\overline{\hat{v}_{k_1}} \hat{v}_{k_2}\hat{v}_{k_3}\\
    &=\sum_{\substack{k+k_1=k_2+k_3\\k,k_1\neq0}}\frac{1}{\tau}e^{i\gamma_j\tau (k_2+k_3)^2}e^{-i\gamma_j\tau k_2^2}e^{-i\gamma_j\tau k_3^2}\frac{(-i)}{2c_q\tau (ik)(ik_1)}\left(e^{-i\tau c_q (k_2+k_3)^2}e^{i\tau c_q k^2}e^{i\tau c_qk_1^2}-1\right)\overline{\hat{v}_{k_1}} \hat{v}_{k_2}\hat{v}_{k_3}\\
     &\quad+ \sum_{\substack{k+k_1=k_2+k_3\\k=0}}\frac{1}{\tau}e^{i\gamma_j\tau (k_2+k_3)^2}e^{-i\gamma_j\tau k_2^2}e^{-i\gamma_j\tau k_3^2}\overline{\hat{v}_{k_1}} \hat{v}_{k_2}\hat{v}_{k_3}\\
      &\quad+ \sum_{\substack{k+k_1=k_2+k_3\\k_1=0}}\frac{1}{\tau}e^{i\gamma_j\tau (k_2+k_3)^2}e^{-i\gamma_j\tau k_2^2}e^{-i\gamma_j\tau k_3^2}\overline{\hat{v}_{0}} \hat{v}_{k_2}\hat{v}_{k_3}\\
       &\quad- \sum_{\substack{k_2+k_3=0\\k=k_1=0}}\frac{1}{\tau}e^{-i\gamma_j\tau k_2^2}e^{-i\gamma_j\tau k_3^2}\overline{\hat{v}_{0}} \hat{v}_{k_2}\hat{v}_{k_3},
\end{align*}
and thus has the following representation in physical space
\begin{align*}
    T^{(1)}&=\frac{(-i)}{2c_q\tau^2}e^{-i\tau c_q\partial_x^2}\partial_x^{-1}\left[\overline{e^{i\tau c_q\partial_x^2}\partial_x^{-1}v}e^{-i(\gamma_j-c_q)\tau \partial_x^2}\left(e^{i\gamma_j\tau \partial_x^2}v\right)^2\right]+\frac{i}{2c_q\tau^2}\overline{v}e^{-i\gamma_j\tau \partial_x^2}\left(e^{i\gamma_j\tau \partial_x^2}v\right)^2\\
    &\quad+\frac{1}{\tau}\int_{\mathbb{T}}\overline{v}e^{-i\gamma_j\tau \partial_x^2}\left(e^{i\gamma_j\tau \partial_x^2}v\right)^2\dd x+\frac{1}{\tau} \overline{\hat{v}_{0}}e^{-i\gamma_j\tau \partial_x^2}\left(e^{i\gamma_j\tau \partial_x^2}v\right)^2+\frac{1}{\tau} \overline{\hat{v}_{0}}\int_{\mathbb{T}}\left(e^{i\gamma_j\tau \partial_x^2}v\right)^2\dd x,
\end{align*}
where we recall the definition of $\partial_x^{-1}$ from \eqref{eqn:definition_dx_inv}. Similar expressions are available for the remaining terms appearing in \eqref{eqn:general_expression_F_NLSE_p_1}.
\begin{remark}\label{rmk:higher_degree_nonlinearities}
For higher order nonlinearities ($p>1$ in \eqref{eqn:Cauchy_problem_NLS}) Duhamel's formula in the twisted variable takes the form
\begin{align*}
    \hat{v}_{a_0}(t_n+\tau)&=\hat{v}_{a_0}(t_n)-i\mu\!\!\!\!\!\!\!\!\!\!\!\!\!\!\sum_{\sum_{j=0}^\mu a_j-\sum_{j=0}^\mu b_j}e^{it_n(\sum_{j=0}^\mu a_j^2-\sum_{j=0}^\mu b_j^2)}\\
    &\quad\quad\quad\quad\quad\int_0^\tau \!\!e^{-is(\sum_{j,k=0, j\neq k}^\mu a_ja_k-\sum_{j,k=0, j\neq k}^\mu b_jb_k)}\prod_{j=1}^{\mu}\overline{\hat{v}_{a_j}(t_n+s)}\prod_{j=0}^{\mu}\hat{v}_{b_j}(t_n+s)\dd s.
\end{align*}
Thus the Duhamel kernel is given by $\exp\left(-is\sum_{j,k=0, j\neq k}^\mu a_ja_k+is\sum_{j,k=0, j\neq k}^\mu b_jb_k\right)$ for which we can use similar kernel approximations to the above, in particular for a second order approximation we can write
\begin{align}\label{eqn:higher_order_nonlinearity_second_order_approx}
e^{-is\sum_{j,k=0, j\neq k}^\mu a_ja_k+is\sum_{j,k=0, j\neq k}^\mu b_jb_k}&\approx\sum_{j,k=0, j\neq k}^\mu e^{-is a_ja_k}+\sum_{j,k=0, j\neq k}^\mu e^{isb_jb_k}-2\mu+1
\end{align}
which, by iterating \eqref{eqn:second_order_kernel_approx_NLSE}, leads to the local error
\begin{align*}
    &\left(e^{-is\sum_{j,k=0, j\neq k}^\mu a_ja_k+is\sum_{j,k=0, j\neq k}^\mu b_jb_k-2is b_0b_1}-1\right)\left(e^{2is b_0b_1}-1\right)\\
    &\quad\quad+\left(e^{-is\sum_{j,k=0, j\neq k}^\mu a_ja_k+is\sum_{j,k=0, j\neq k}^\mu b_jb_k-2is b_0b_1-2is b_0b_2}-1\right)\left(e^{2is b_0b_1}-1\right)\left(e^{2is b_0b_2}-1\right)+\cdots,
\end{align*}
and therefore, in the spirit of estimate \eqref{eqn:local_error_second_order_kernel_approx_NLSE}, the approximation \eqref{eqn:higher_order_nonlinearity_second_order_approx} leads to a loss of $3\beta/2$ derivatives to provide an approximation of order $\mathcal{O}(\tau^{\beta})$ to the kernel and a local error of $\mathcal{O}(\tau^{1+\beta})$ in any resulting scheme so long as the Runge--Kutta interpolation coefficients are chosen appropriately. This is still significantly better than splitting methods and exponential integrators which require a loss of $2\beta$ derivatives to achieve a comparable local error. 
\end{remark}
\subsubsection{Further examples of RK resonance-based schemes for NLSE}
Following the above discussion {of constructing symplecticity-preserving kernel approximations in Section~\ref{sec:symplectic_kernel_approximations_NLSE} (specifically Proposition~\ref{prop:local_error_higher_order_symplectic_kernel_approx})} let us recall the RK resonance-based schemes for the NLSE {as follows (cf. Section~\ref{sec:RK_res_schemes_for_general_eqns}):}
\begin{align}\begin{split}\label{eqn:RK_res_based_schemes_NLSE}
	u^{n+1}&=e^{i\tau{\partial_x^2}}u^{n}+\tau \sum_{p,q,r=0}^{S} b^{p,q,r}e^{i\tau{\partial_x^2} }K_{p,q,r},\\
	K_{p,q,r}&=\mathcal{F}^{[NLSE,2d]}_p(\tau; c_q; u^{n}+\tau \sum_{\tilde{p},\tilde{q},\tilde{r}=0}^{S} a_{p,q,r}^{\tilde{p},\tilde{q},\tilde{r}}K_{\tilde{p},\tilde{q},\tilde{r}}),\end{split}
\end{align}
for some constants $c_q, a_{p,q,r}^{\tilde{p},\tilde{q},\tilde{r}},b^{p,q,r}\in\mathbb{R}$. To illustrate that the formulation \eqref{eqn:RK_res_based_schemes_NLSE} encompasses a wide range of low-regularity integrators we consider the following two examples.

\begin{example}[First order method {comparable to \cite{ostermann2018low}}] 
    Take $d=1$,$S=0, c_0=b^{0,0,0}=1,a_{0,0,0}^{0,0,0}=0$, leading to the scheme
    \begin{align*}
        	u^{n+1}&=e^{i\tau{\partial_x^2} }u^{n}+\tau e^{i\tau{\partial_x^2} }\mathcal{F}^{[NLSE,2]}_0(\tau; 1; u^{n}), 
    \end{align*}
    which in Fourier coordinates corresponds to
    \begin{align*}
        \hat{u}^{n+1}_k=e^{-ik^2\tau}\hat{u}^n_k+(-i\mu)\tau e^{-ik^2\tau}\sum_{k+k_1=k_2+k_3} \left[\varphi_{1}(-2i\tau kk_1)+\varphi_{1}(2i\tau k_2k_3)-1\right]\overline{\hat{u}_{k_1}^n}\hat{u}_{k_2}^n\hat{u}_{k_3}^n.
    \end{align*}
    \end{example}
    Using the estimate \eqref{eqn:local_error_second_order_kernel_approx_NLSE} and a stability analysis comparable to Section~\ref{sec:convergence_analysis_NLSE} one can prove the following convergence result for this method:
    \begin{proposition}
         {Let} $\gamma\in[0,1], T>0$ and $r>1/2${. Suppose, for $u_0\in H^{r+\gamma}$, the NLSE \eqref{eqn:Cauchy_problem_NLS} has a solution $u(\,\cdot\,)$ on $t\in[0,T]$ taking values in $H^{r+\gamma}$. Then} there are constants $C,\tau_0>0${, independent of $u_0,u$,} such that for all $\tau\in(0,\tau_0)$ we have
    \begin{align*}
        \|u(n\tau)-u^n\|_{H^r}\leq C \tau^{\gamma}\sup_{t\in[0,n\tau]}\|u(t)\|_{H^{r+\gamma}},\quad\forall 0\leq n\leq \lfloor T/\tau\rfloor{.}
    \end{align*}
    \end{proposition}
\begin{example}[Optimal (non-symplectic) second-order scheme] We can also consider the case $S=1$ which allows us to consider the following construction:
\begin{align}\begin{split}\label{eqn:NLSE_non-symplectic_second_order_integrator}
    u^{n+1}&=e^{i{\partial_x^2} \tau}\left[u^{n}+\tau K_{0,0,0}+\tau K_{1,0,1}-\tau K_{1,0,0}\right],\\
	K_{0,0,0}&=\mathcal{F}^{[NLSE,2]}_0(\tau;1;u^n),\\
	K_{1,0,0}&=\mathcal{F}^{[NLSE,2]}_1(\tau;1;u^n),\\
	K_{1,0,1}&=\mathcal{F}^{[NLSE,2]}_1(\tau;1;u^{n}+\tau K_{0,0,0}+\tau K_{1,0,1}-\tau K_{1,0,0}),\end{split}
\end{align}
corresponding to the choice of parameters
\begin{align*}
    a^{0,0,0}_{1,0,1}&=1, a^{1,0,0}_{1,0,1}=-1, a^{1,0,1}_{1,0,1}=1,\\
    b^{0,0,0}&=1,b^{1,0,0}=-1,b^{1,0,1}=1, c_0=1,
\end{align*}
and all remaining constants equal to zero. This is equivalent to the following implicit numerical scheme:
\begin{align}\begin{split}\label{eqn:non-symplectic_second_order_NLSE_fourier_coordinates}
\hat{u}^{n+1}_k&=e^{-ik^2\tau}\hat{u}^n_k+(-i\mu)e^{-ik^2\tau}\sum_{k+k_1=k_2+k_3}\int_0^\tau \left[e^{-2iskk_1}+e^{2isk_2k_3}-1\right]\\
    &\quad\quad\quad\quad\quad\quad\quad\quad\quad\quad\quad\quad\quad\left[\overline{\hat{u}_{k_1}^n}\hat{u}_{k_2}^n\hat{u}_{k_3}^n+\frac{s}{\tau}\left(e^{i\tau(k_2^2+k_3^2-k_1^2)}\overline{\hat{u}_{k_1}^{n+1}}\hat{u}_{k_2}^{n+1}\hat{u}_{k_3}^{n+1}-\overline{\hat{u}_{k_1}^n}\hat{u}_{k_2}^n\hat{u}_{k_3}^n\right)\right]\dd s.
    \end{split}
\end{align}
\end{example}
It turns out this scheme has a local error of the following form:
\begin{proposition}\label{prop:local_error_non-symplectic_second_order_scheme} {Fix $R>0,s>1/2,\gamma\in[0,2]$, and let} us denote by {$\tau\mapsto\phi_{\tau}(u(t_n))\in H^{s+2}$} the solution to \eqref{eqn:Cauchy_problem_NLS} with initial condition {$u(t_n)\in H^{s+2}$} and denote by $\Phi_{\tau}$ the time-stepping scheme \eqref{eqn:NLSE_non-symplectic_second_order_integrator}{. Then} there is a $\tau_R>0$ such that for all $\tau\in [0,\tau_R)$ we have
whenever $\sup_{t\in[0,\tau]}\|{\phi_{t}}(u(t_n))\|_{H^{s+2}}<R$ then
\begin{align*}
	\left\|{\phi_\tau}(u(t_n))-\Phi_{\tau}(u(t_n))\right\|_{H^s}\leq c_R\tau^{3} 
	\end{align*}
for some constant $c_R>0$ depending on $R>0,s$.
\end{proposition}
\begin{proof}
This claim is proved in Appendix~\ref{app:proof_local_error_non-symplectic_second_order_scheme}.
\end{proof}
\subsection{Structure preservation properties}\label{sec:structure_preservation}
The ultimate purpose of our construction of RK resonance-based methods \eqref{eqn:RK_res_based_schemes_KdV} \& \eqref{eqn:RK_res_based_schemes_NLSE} is to use the large number of additional degrees of freedom as compared to prior (explicit) low-regularity schemes to facilitate structure preservation properties of the corresponding methods.

\subsubsection{Conservation of quadratic invariants in the direct flow of KdV and NLSE}
We begin our discussion by characterising those RK resonance-based methods which conserve the quadratic invariants \eqref{eqn:quadratic_first_integral_kdv} \& \eqref{eqn:quadratic_first_integral_nlse}.
\begin{theorem}\label{thm:L2_pres_condition_direct_flow} Suppose {that} the real-valued coefficients $b^{p,q,r},a^{\tilde{p},\tilde{q},\tilde{r}}_{p,q,r}$ satisfy
	\begin{align}\label{eqn:condition_for_L2_preservation}
	{b^{\tilde{p},\tilde{q},\tilde{r}}}b^{p,q,r}=b^{p,q,r}{a^{\tilde{p},\tilde{q},\tilde{r}}_{p,q,r}}+{b^{\tilde{p},\tilde{q},\tilde{r}}}a^{p,q,r}_{\tilde{p},\tilde{q},\tilde{r}},
	\end{align}
 for all indices $0\leq p,q,r,\tilde{p},\tilde{q},\tilde{r}\leq S$. Then the RK resonance-based schemes \eqref{eqn:RK_res_based_schemes_KdV} \& \eqref{eqn:RK_res_based_schemes_NLSE} preserve the corresponding quadratic first integrals, \eqref{eqn:quadratic_first_integral_kdv} \& \eqref{eqn:quadratic_first_integral_nlse}, exactly {whenever $u\in H^{r}$ with $r\geq 1$ \& $r>1/2$} respectively.
\end{theorem}
{\begin{remark}
    We note that the condition \eqref{eqn:condition_for_L2_preservation} is indeed very similar to the characterisation of classical Runge--Kutta schemes which preserve quadratic invariants. In particular, it was first shown in \cite{cooper87} that a Runge--Kutta method with coefficient matrix $A_{ij}, 0\leq i,j\leq S$, and weights $b_i, 0 \leq S$, preserves quadratic invariants of the underlying differential equation if 
\begin{align*}
    b_i A_{i j}+b_j A_{j i}=b_i b_j \quad \text { for all } i, j=0, \ldots, S.
\end{align*}
\end{remark}}
In the proof {of Theorem~\ref{thm:L2_pres_condition_direct_flow}} we rely on the following crucial lemma (where we denote by $C^\infty$ infinitely differentiable functions on the domain $\mathbb{T}$):
\begin{lemma}\label{lem:orthogonality_condition_L2}
	For any $u{\in C^\infty}$ we have
	\begin{align}\label{eqn:orthogonality_condition_KdV}
	\int_{\mathbb{T}} u\mathcal{F}^{[KdV]}_p(\tau; c_q;u)\dd x&=0,\\\label{eqn:orthogonality_condition_NLSE}
 \re\left[\int_{\mathbb{T}} \overline{u}\mathcal{F}^{[NLSE,2d]}_p(\tau; c_q;u)\dd x\right]&=0,
	\end{align}
 for all $p,d\in\mathbb{N},\tau\in\mathbb{R}_{+},c_q\in[0,1]$, where the nonlinear operators in the identities are defined as in \eqref{eqn:central_integrals_in_KdV_RK_construction} \& \eqref{eqn:central_integrals_in_NLSE_RK_construction}.
\end{lemma}
\begin{proof}
Since \eqref{eqn:quadratic_first_integral_kdv} is preserved under the exact flow of the KdV equation we have, for all initial data $u^0$
\begin{align}\label{eqn:direct_orthogonality_KdV}
	\sum_{m\in\mathbb{Z}} \hat{u}_m(t) \sum_{-m=a+b} (-i)m\e^{it3mab} \hat{u}_a(t)\hat{u}_b(t)=2\int_{\mathbb{T}} u(t,x)u'(t,x)\dd x=\frac{\dd}{\dd t}\int_{\mathbb{T}} u(t,x)^2\dd x=0{.}
 \end{align}
 Thus in particular the identity \eqref{eqn:direct_orthogonality_KdV} holds at $t=0$ and therefore for any (time-independent) $u\in H^{r}$ we have
\begin{align*}
 \sum_{m\in\mathbb{Z}} \hat{u}_m \sum_{-m=a+b} (-i)m\e^{it3mab} \hat{u}_a\hat{u}_b=0.
\end{align*}
Note due to the regularity assumptions on $u$ and local well-posedness of KdV all of the above sums converge absolutely. Multiplying \eqref{eqn:direct_orthogonality_KdV} by $s^p\tau^{-(p+1)}$ and integrating over $[0,c_q\tau]$ with respect to $s$ immediately implies \eqref{eqn:orthogonality_condition_KdV}.

The orthogonality property \eqref{eqn:orthogonality_condition_NLSE} follows in similar vein from \eqref{eqn:central_symmetry_kernel_approx} which implies that for any fixed (time-independent) $u\in H^{r}$
\begin{align*}
    2\re\left[\sum_{k\in\mathbb{Z}}(-i\mu)\sum_{k+k_1=k_2+k_3}e^{it_n(k^2+k_1^2-k_2^2-k_3^2)}\mathcal{K}_{d}(t-t_n;k,k_1,k_2,k_3)\overline{\hat{u}_k}\overline{\hat{u}_{k_1}} \hat{v}_{k_2}\hat{v}_{k_3}\right]&=0
\end{align*}
which we can again multiply by $s^p\tau^{-(p+1)}$ and integrate over $[0,c_q\tau]$ to arrive at \eqref{eqn:orthogonality_condition_NLSE}.
\end{proof}
\begin{proof}[Proof of Theorem~\ref{thm:L2_pres_condition_direct_flow}] Using \eqref{eqn:orthogonality_condition_NLSE} we will prove the result for the NLSE and we note that the corresponding characterisation for the KdV equation follows analogously from using \eqref{eqn:orthogonality_condition_KdV}. {Suppose to begin with that $u\in C^{\infty}$.} Stability estimates similar to those presented in {Section}~\ref{sec:convergence_analysis_NLSE} show that under the assumptions of the theorem $K_{p,q,r}\in H^{r}$ for each $0\leq p,q,r\leq S$. We have
	\begin{align*}
	\int_{\mathbb{T}} |u^{n+1}|^2-|u^{n}|^2\dd x&=\int_{\mathbb{T}}|u^{n+1}-u^n|^2\dd x+2\re\int_{\mathbb{T}}\overline{u^{n}}(u^{n+1}-u^n)\dd x\\
	&=\int_{\mathbb{T}}\left|\tau e^{i\tau{\partial_x^2}} \sum_{p,q,r=0}^{S} b^{p,q,r}K_{p,q,r}\right|^2\dd x+2\re\int_{\mathbb{T}}\overline{u^{n}}\left(\tau  e^{i\tau{\partial_x^2}}\sum_{p,q,r=0}^{S} b^{p,q,r}K_{p,q,r}\right)\dd x\\
	&=\tau^2\sum_{p,q,r,\tilde{p},\tilde{q},\tilde{r}=0}^S{b^{\tilde{p},\tilde{q},\tilde{r}}}b^{p,q,r}\int_{\mathbb{T}}\overline{ e^{i\tau{\partial_x^2}}K_{\tilde{p},\tilde{q},\tilde{r}}} e^{i\tau{\partial_x^2}}K_{p,q,r}\dd x\\
	&\quad+2\tau \sum_{p,q,r=0}^S\re\left[b^{p,q,r}\int_{\mathbb{T}}\overline{u^n} e^{i\tau{\partial_x^2}}\mathcal{F}^{[NLSE,d]}_p\left(c_q;\tau;u^{n}+\tau \sum_{\tilde{p},\tilde{q},\tilde{r}=0}^Sa_{p,q,r}^{\tilde{p},\tilde{q},\tilde{r}}K_{\tilde{p},\tilde{q},\tilde{r}}\right)\dd x\right].
	\end{align*}
Thus, using \eqref{eqn:orthogonality_condition_NLSE}, we find
\begin{align*}
&=\tau^2\sum_{p,q,r,\tilde{p},\tilde{q},\tilde{r}=0}^S{b^{\tilde{p},\tilde{q},\tilde{r}}}b^{p,q,r}\int_{\mathbb{T}}\overline{ e^{i\tau{\partial_x^2}}K_{\tilde{p},\tilde{q},\tilde{r}}} e^{i\tau{\partial_x^2}}K_{p,q,r}\dd x\\
	&\quad-2\tau^2 \sum_{p,q,r=0}^S\re\left[b^{p,q,r}\int_{\mathbb{T}}\overline{\sum_{\tilde{p},\tilde{q},\tilde{r}=0}^Sa_{p,q,r}^{\tilde{p},\tilde{q},\tilde{r}} e^{i\tau{\partial_x^2}}K_{\tilde{p},\tilde{q},\tilde{r}}} e^{i\tau{\partial_x^2}}K_{p,q,r}\dd x\right].
	\end{align*}
Therefore we have (bringing all terms under the same summation)
\begin{align*}
\int_{\mathbb{T}} |u^{n+1}|^2-|u^{n}|^2\dd x&=\sum_{p,q,r,\tilde{p},\tilde{q},\tilde{r}=0}^S\left({b^{\tilde{p},\tilde{q},\tilde{r}}}b^{p,q,r}-b^{p,q,r}{a^{\tilde{p},\tilde{q},\tilde{r}}_{p,q,r}}-{b^{\tilde{p},\tilde{q},\tilde{r}}}a^{p,q,r}_{\tilde{p},\tilde{q},\tilde{r}}\right)\int_{\mathbb{T}}\overline{ e^{i\tau{\partial_x^2}}K_{\tilde{p},\tilde{q},\tilde{r}}} e^{i\tau{\partial_x^2}}K_{p,q,r}\dd x,
\end{align*}
and the result follows {for $u\in C^\infty$}. {From Section~\ref{sec:convergence_analysis_NLSE} we then know that $u^{n}\mapsto u^{n+1}$ is a continuous map on $H^{r}, r>1/2$ and that the map $u\mapsto \int_{\mathbb{T}}|u|^2\dd x$ is continuous as a map from $L^2$ to $\mathbb{R}$. Thus the result follows by density.}
\end{proof}

\subsubsection{Symplectic Runge--Kutta resonance-based methods}
It turns out, analogously to classical Runge--Kutta methods, that the conditions \eqref{eqn:condition_for_L2_preservation} are also sufficient for the method to preserve the symplectic form. For classical Runge--Kutta methods this follows directly from the fact that they are closed under differentiation (cf. \cite{bochev1994quadratic}). Seeing as we are in the PDE case we will instead follow a more direct approach first given by \cite{sanz1988runge}.
\begin{theorem}\label{thm:preservation_of_symplectic_form}
    Suppose that the real-valued coefficients $b^{p,q,r},a^{\tilde{p},\tilde{q},\tilde{r}}_{p,q,r}$ satisfy
	\begin{align}\label{eqn:condition_for_L2_preservation_vsymplecticity}
	{b^{\tilde{p},\tilde{q},\tilde{r}}}b^{p,q,r}=b^{p,q,r}{a^{\tilde{p},\tilde{q},\tilde{r}}_{p,q,r}}+{b^{\tilde{p},\tilde{q},\tilde{r}}}a^{p,q,r}_{\tilde{p},\tilde{q},\tilde{r}},
	\end{align}
 for all indices $0\leq p,q,r,\tilde{p},\tilde{q},\tilde{r}\leq S$. Then the RK resonance-based schemes \eqref{eqn:RK_res_based_schemes_KdV} \& \eqref{eqn:RK_res_based_schemes_NLSE} preserve the corresponding symplectic form (\eqref{eqn:symplectic_form_KdV} \& \eqref{eqn:symplectic_form_NLSE}, respectively) exactly {under the same regularity assumptions as in Theorem~\ref{thm:L2_pres_condition_direct_flow}}.
\end{theorem}
In the interest of brevity we describe the proof of this statement only for the (slightly more challenging) NLSE case, the simpler KdV case follows analogously, adapting relevant notation to the corresponding symplectic formulation of the KdV equation. First of all we note that using the symmetry condition \eqref{eqn:central_symmetry_kernel_approx} we can rewrite the RK resonance-based method \eqref{eqn:RK_res_based_schemes_NLSE} simultaneously both in $u$ and $\overline{u}$ to understand its action on the Hamiltonian coordinates $\bm{\xi}=(\hat{u}_k)_{k\in\mathbb{Z}},\bm{\eta}=(\overline{\hat{u}_k})_{k\in\mathbb{Z}}$:
\begin{align*}
\xi_k^{n+1}&=e^{-ik^2 \tau}\xi_k^{n}+\tau \sum_{p,q,r=0}^{S} b^{p,q,r}e^{-ik^2 \tau}\widehat{\left(K_{p,q,r}\right)}_k,\\
\eta_k^{n+1}&=e^{ik^2 \tau}\eta_k^{n}+\tau \sum_{p,q,r=0}^{S} b^{p,q,r}e^{ik^2 \tau}\widehat{\left(L_{p,q,r}\right)}_k,\\
\bm{K}_{p,q,r}&=\bm{f}^{[NLSE,2d]}_p\left(\tau;c_q;\bm{\xi}+\tau \sum_{\tilde{p},\tilde{q},\tilde{r}=0}^{S} a_{p,q,r}^{\tilde{p},\tilde{q},\tilde{r}}\bm{K}_{\tilde{p},\tilde{q},\tilde{r}};\bm{\eta}+\tau \sum_{\tilde{p},\tilde{q},\tilde{r}=0}^{S} a_{p,q,r}^{\tilde{p},\tilde{q},\tilde{r}}\bm{L}_{\tilde{p},\tilde{q},\tilde{r}}\right),\\
\bm{L}_{p,q,r}&=\bm{g}^{[NLSE,2d]}_p\left(\tau;c_q;\bm{\xi}+\tau \sum_{\tilde{p},\tilde{q},\tilde{r}=0}^{S} a_{p,q,r}^{\tilde{p},\tilde{q},\tilde{r}}\bm{K}_{\tilde{p},\tilde{q},\tilde{r}};\bm{\eta}+\tau \sum_{\tilde{p},\tilde{q},\tilde{r}=0}^{S} a_{p,q,r}^{\tilde{p},\tilde{q},\tilde{r}}\bm{L}_{\tilde{p},\tilde{q},\tilde{r}}\right),
\end{align*}
where we defined the functions $\bm{f}^{[NLSE,2d]}_p,\bm{g}^{[NLSE,2d]}_p$ as follows
\begin{align*}
    \left(f^{[NLSE,2d]}_p(\tau;c_q;\bm{v};\bm{w})\right)_k&=(-i\mu)\sum_{k+k_1=k_2+k_3}\frac{1}{\tau^{p+1}}\int_0^{c_q\tau}\mathcal{K}_{2d}(s;k,k_1,k_2,k_3)s^p\dd s  w_{k_1} v_{k_2}v_{k_3},\\
    \left(g^{[NLSE,2d]}_p(\tau;c_q;\bm{v};\bm{w})\right)_k&=i\mu\sum_{k+k_1=k_2+k_3}\frac{1}{\tau^{p+1}}\int_0^{c_q\tau}\mathcal{K}_{2d}(s;k_2,k_3,k,k_1)s^p\dd s  v_{k_1} w_{k_2}w_{k_3}.
\end{align*}
In the proof of Theorem~\ref{thm:preservation_of_symplectic_form} we rely on the following crucial lemma, which is an analogue of Lemma~\ref{lem:orthogonality_condition_L2} and a direct consequence of the symplectic form being a quadratic first integral of the tangent flow to the NLSE.
\begin{lemma}\label{lem:orthogonality_condition_symplectic_form}
For any $\bm{v},\bm{w}\in l^2_s\left(\mathbb{Z}\right), s>1/2$,
    \begin{align*}
    \dd\bm{v}\wedge\dd\left(\bm{g}^{[NLSE,2d]}_p(\tau;c_q;\bm{v};\bm{w})\right)+\dd\left(\bm{f}^{[NLSE,2d]}_p(\tau;c_q;\bm{v};\bm{w})\right)\wedge\dd\bm{w}=0
    \end{align*}
\end{lemma}
\begin{proof} By the polynomial nature of the nonlinearity, and noting that $$\mathcal{K}_{2d}(s;k_2,k_3,k,k_1)=\mathcal{K}_{2d}(s;k_2,k_3,k_1,k)=\mathcal{K}_{2d}(s;k_3,k_2,k,k_1),$$ we have
\begin{align*}
 \dd\bm{v}\wedge\dd\left(\bm{g}^{[NLSE,2d]}_p(\tau;c_q;\bm{v};\bm{w})\right)&=i\mu \frac{1}{\tau^{p+1}}\int_{0}^{c_q\tau}\sum_{k+k_1=k_2+k_3}\mathcal{K}_{2d}(s;k_2,k_3,k,k_1)\left(\dd v_k\wedge \dd v_{k_1}\right) w_{k_2}w_{k_3}\ s^{p}\dd s\\
 &+2i\mu \frac{1}{\tau^{p+1}}\int_{0}^{c_q\tau}\sum_{k+k_1=k_2+k_3}\mathcal{K}_{2d}(s;k_2,k_3,k,k_1)\left(\dd v_k\wedge \dd w_{k_2}\right) v_{k_1}w_{k_3}\ s^{p}\dd s.
\end{align*}
Since $\wedge$ is antisymmetric and $\mathcal{K}_{2d}(s;k_2,k_3,k,k_1)$ is symmetric in $k,k_1$ the first term cancels and we are left with
\begin{align}\label{eqn:auxilliary_equation_symplectic_identity1}
     \dd\bm{v}\wedge\dd\left(\bm{g}^{[NLSE,2d]}_p(\tau;c_q;\bm{v};\bm{w})\right)&= 2i\mu \frac{1}{\tau^{p+1}}\int_{0}^{c_q\tau}\!\!\!\!\!\!\sum_{k+k_1=k_2+k_3}\!\!\!\!\!\!\mathcal{K}_{2d}(s;k_2,k_3,k,k_1)\left(\dd v_k\wedge \dd w_{k_2}\right) v_{k_1}w_{k_3}\ s^{p}\dd s.
\end{align}
Similarly we have
\begin{align}\nonumber
    \dd\left(\bm{f}^{[NLSE,2d]}_p(\tau;c_q;\bm{v};\bm{w})\right)\wedge\dd\bm{w}&=-2i\mu \frac{1}{\tau^{p+1}}\int_{0}^{c_q\tau}\!\!\!\sum_{k+k_1=k_2+k_3}\!\!\!\!\!\mathcal{K}_{2d}(s;k,k_1,k_2,k_3)\left(\dd v_{k_2}\wedge \dd w_{k}\right) w_{k_1}v_{k_3} s^{p}\dd s\\\label{eqn:auxilliary_equation_symplectic_identity2}
    &=-2i\mu \frac{1}{\tau^{p+1}}\int_{0}^{c_q\tau}\!\!\!\sum_{k+k_1=k_2+k_3}\!\!\!\!\!\mathcal{K}_{2d}(s;k_2,k_3,k,k_1)\left(\dd v_{k}\wedge \dd w_{k_2}\right) w_{k_3}v_{k_1} s^{p}\dd s,
\end{align}
where in the final line we simply relabelled the dummy indices in the summation. Adding \eqref{eqn:auxilliary_equation_symplectic_identity1} \& \eqref{eqn:auxilliary_equation_symplectic_identity2} gives the desired result.
\end{proof}
\begin{proof}[Proof of Theorem~\ref{thm:preservation_of_symplectic_form}]
    We follow the steps taken in \cite{sanz1988runge}. In the interest of brevity we prove the statement for the NLSE and note that it follows analogously for the KdV equation. We want to understand the evolution of $\omega=\sum_{a\in\mathbb{Z}}\dd\xi_a\wedge\dd\eta_a$, in particular we would like to show that
    \begin{align*}
        \dd\bm{\xi}^{n+1}\wedge\dd\bm{\eta}^{n+1}=\dd\bm{\xi}^{n}\wedge\dd\bm{\eta}^{n},
    \end{align*}
    where $\bm{\xi}^{n}=\left(u_k^{n}\right)_{k\in\mathbb{Z}}$ and $\bm{\eta}^{n}=\left(\overline{u_k^{n}}\right)_{k\in\mathbb{Z}}$. 
Differentiating and taking external products we find
\begin{align*}
   \dd\bm{\xi}^{n+1}\wedge\dd\bm{\eta}^{n+1}&=\dd\left(e^{i\tau{\partial_x^2}}\bm{\xi}^{n}\right)\wedge\dd\left(e^{-i\tau{\partial_x^2}}\bm{\eta}^{n}\right)+\sum_{p,q,r=0}^{S} b^{p,q,r}\dd\left(e^{i\tau{\partial_x^2}}\bm{K}_{p,q,r}^{n}\right)\wedge\dd\left(e^{-i\tau{\partial_x^2}}\bm{\eta}^{n}\right)\\
&\quad+\sum_{p,q,r=0}^{S} b^{p,q,r}\dd\left(e^{i\tau{\partial_x^2}}\bm{\xi}^{n}\right)\wedge\dd\left(e^{-i\tau{\partial_x^2}}\bm{L}_{p,q,r}^{n}\right)\\
&\quad+\sum_{p,q,r=0}^{S}\sum_{\tilde{p},\tilde{q},\tilde{r}=0}^{S} b^{p,q,r}b^{\tilde{p},\tilde{q},\tilde{r}}\dd\left(e^{i\tau{\partial_x^2}}\bm{K}_{p,q,r}^{n}\right)\wedge\dd\left(e^{-i\tau{\partial_x^2}}\bm{L}_{p,q,r}^{n}\right).
\end{align*}
By linearity we note that for any vectors $\bm{x},\bm{y}\in l^2_s({\mathbb{Z})},s>1/2,$ we have
\begin{align*}
\dd\left(e^{i\tau{\partial_x^2}}\bm{x}\right)\wedge\dd\left(e^{-i\tau{\partial_x^2}}\bm{y}\right)=\sum_{k\in\mathbb{Z}}\dd \left(e^{-i\tau k^2}x_k\right)\wedge\left(e^{i\tau k^2}y_k\right)=\sum_{k\in\mathbb{Z}}e^{-i\tau k^2}e^{i\tau k^2}\dd x_k\wedge\dd y_k=\dd \bm{x}\wedge\dd\bm{y}.
\end{align*}
Thus the above immediately simplifies to
\begin{align*}
    \dd\bm{\xi}^{n+1}\wedge\dd\bm{\eta}^{n+1}-\dd\bm{\xi}^{n}\wedge\dd\bm{\eta}^{n}&=\sum_{p,q,r=0}^{S} b^{p,q,r}\dd\bm{\xi}^{n}\wedge\dd\bm{L}_{p,q,r}^{n}+\sum_{p,q,r=0}^{S} b^{p,q,r}\dd\bm{K}_{p,q,r}^{n}\wedge\dd\bm{\eta}^{n}\\
    &\quad+\sum_{p,q,r=0}^{S}\sum_{\tilde{p},\tilde{q},\tilde{r}=0}^{S} b^{p,q,r}b^{\tilde{p},\tilde{q},\tilde{r}}\dd\bm{K}_{p,q,r}^{n}\wedge\dd\bm{L}_{p,q,r}^{n}.
\end{align*}
By adding and subtracting the same terms we arrive at
\begin{align*}
    \dd\bm{\xi}^{n+1}\wedge\dd\bm{\eta}^{n+1}-\dd\bm{\xi}^{n}\wedge\dd\bm{\eta}^{n}&=\sum_{p,q,r=0}^{S} b^{p,q,r}\dd\left(\bm{\xi}^{n}+\tau \sum_{\tilde{p},\tilde{q},\tilde{r}=0}^{S} a_{p,q,r}^{\tilde{p},\tilde{q},\tilde{r}}\bm{K}_{\tilde{p},\tilde{q},\tilde{r}}\right)\wedge\dd\bm{L}_{p,q,r}^{n}\\
    &\quad+\sum_{p,q,r=0}^{S} b^{p,q,r}\dd\bm{K}_{p,q,r}^{n}\wedge\dd\left(\bm{\eta}^{n}+\tau \sum_{\tilde{p},\tilde{q},\tilde{r}=0}^{S} a_{p,q,r}^{\tilde{p},\tilde{q},\tilde{r}}\bm{L}_{\tilde{p},\tilde{q},\tilde{r}}\right)\\
    &\quad+\sum_{p,q,r=0}^{S}\sum_{\tilde{p},\tilde{q},\tilde{r}=0}^{S} \left(b^{p,q,r}b^{\tilde{p},\tilde{q},\tilde{r}}-b^{p,q,r}{a^{\tilde{p},\tilde{q},\tilde{r}}_{p,q,r}}-{b^{\tilde{p},\tilde{q},\tilde{r}}}a^{p,q,r}_{\tilde{p},\tilde{q},\tilde{r}}\right)\dd\bm{K}_{p,q,r}^{n}\wedge\dd\bm{L}_{p,q,r}^{n}.
\end{align*}
Thus, if \eqref{eqn:condition_for_L2_preservation_vsymplecticity} holds, then
\begin{align*}
    \dd\bm{\xi}^{n+1}\wedge\dd\bm{\eta}^{n+1}-\dd\bm{\xi}^{n}\wedge\dd\bm{\eta}^{n}&=\sum_{p,q,r=0}^{S} b^{p,q,r}\dd\left(\bm{\xi}^{n}+\tau \sum_{\tilde{p},\tilde{q},\tilde{r}=0}^{S} a_{p,q,r}^{\tilde{p},\tilde{q},\tilde{r}}\bm{K}_{\tilde{p},\tilde{q},\tilde{r}}\right)\wedge\dd\bm{L}_{p,q,r}^{n}\\
    &\quad+\sum_{p,q,r=0}^{S} b^{p,q,r}\dd\bm{K}_{p,q,r}^{n}\wedge\dd\left(\bm{\eta}^{n}+\tau \sum_{\tilde{p},\tilde{q},\tilde{r}=0}^{S} a_{p,q,r}^{\tilde{p},\tilde{q},\tilde{r}}\bm{L}_{\tilde{p},\tilde{q},\tilde{r}}\right),
\end{align*}
and the result follows from Lemma~\ref{lem:orthogonality_condition_symplectic_form}.
\end{proof}

\subsection{Examples}\label{sec:examples_of_symplectic_integrators}

An interesting consequence of the above conditions for symplecticity is that even in the RK resonance-based setting they can only be satisfied by implicit methods, which justifies the paradigm shift from explicit low-regularity integrators to implicit ones taken in the present work.
\begin{cor}\label{cor:symplectic_RK_res_are_implicit}
Any consistent method satisfying \eqref{eqn:condition_for_L2_preservation} is necessarily implicit.
\end{cor}
\begin{proof}
In order for the method \eqref{eqn:RK_res_based_schemes_KdV} (\eqref{eqn:RK_res_based_schemes_NLSE} respectively) to be consistent we have to have $b^{p,q,r}\neq 0$ for some $0\leq p,q,r\leq S$. Consider the identity \eqref{eqn:condition_for_L2_preservation} for $p=\tilde{p},q=\tilde{q},r=\tilde{r}$ which gives
    \begin{align*}
    \left(b^{p,q,r}\right)^2=2b^{p,q,r}a^{p,q,r}_{p,q,r},
    \end{align*}
    i.e. $a^{p,q,r}_{p,q,r}=b^{p,q,r}/2\neq0$ which of course means that the equation defining $K_{p,q,r}$ is implicit.
\end{proof}
In other words, we can at best hope for diagonally implicit symplectic low-regularity integrators in the classes \eqref{eqn:RK_res_based_schemes_KdV} \& \eqref{eqn:RK_res_based_schemes_NLSE}. In the following we present two examples of such symplectic low-regularity schemes. The first example has an analogue in the classical midpoint rule.

\begin{example}[Resonance-based midpoint rule]\label{ex:resonance-based_midpoint_rule}
If we take $S=0$, and the choice $c_0=1, a_{0,0,0}^{0,0,0}=1/2, b_{0,0,0}=1$ then the method \eqref{eqn:RK_res_based_schemes_KdV} simplifies to
\begin{align*}
    u^{n+1}&=e^{-\tau\partial_x^3}u^{n}+\tau e^{-\tau\partial_x^3}K_{0,0,0},\\
	K_{0,0,0}&=\mathcal{F}^{[KdV]}_0(\tau; 1; u^{n}+\frac{\tau}{2}K_{0,0,0}).
\end{align*}
This can be further simplified into the following form
\begin{align*}
    u^{n+1}&=e^{-\tau\partial_x^3}u^{n}+\tau e^{-\tau\partial_x^3}\mathcal{F}^{[KdV]}_0\left(\tau; 1; \frac{u^n+e^{\tau\partial_x^3}u^{n+1}}{2}\right),
\end{align*}
or, alternatively, in physical coordinates
\begin{align}\label{eqn:resonance_based_midpoint_rule_u_KdV}
u^{n+1}=\e^{-\tau\partial_x^3}u^{n}+\frac{1}{24}\left(\partial_x^{-1}u^{n+1}+\e^{-\tau\partial_x^3}\partial_x^{-1}u^n\right)^2-\frac{1}{24}\e^{-\tau\partial_x^3}\left(\e^{\tau \partial_x^3}\partial_x^{-1}u^{n+1}+\partial_x^{-1}u^n\right)^2.
\end{align}
The method resembles the classical midpoint rule, but is able to capture nonlinear frequency interactions in the KdV flow more carefully, thus leading to improved convergence in low-regularity regimes. An analogous method can of course be constructed for the NLSE by taking the same coefficients in \eqref{eqn:RK_res_based_schemes_NLSE}, which leads to the scheme
\begin{align*}
	u^{n+1}&=e^{i\tau {\partial_x^2}}u^{n}+\tau e^{i\tau {\partial_x^2}}\mathcal{F}^{[NLSE,2]}_0\left(\tau; 1;\frac{u^{n}+e^{-i\tau{\partial_x^2}} u^{n+1}}{2}\right),
\end{align*}
taking the following form in physical coordinates:
\begin{align}\begin{split}\label{eqn:resonance_based_midpoint_rule_u_NLSE}
			u^{n+1}&=e^{i\partial_x^2 \tau}u^{n}-i\mu\left[\frac{i}{2}\partial_x^{-1}\left(\left[e^{-i\tau \partial_x^2}\overline{\partial_x^{-1}u^{n+\frac{1}{2}}}\right]\left[e^{i\tau \partial_x^2}\left(u^{n+\frac{1}{2}}\right)^2\right]\right)-\frac{i}{2}e^{i\partial_x^2 \tau}\partial_x^{-1}\left(\overline{\partial_x^{-1}u^{n+\frac{1}{2}}}\left(u^{n+\frac{1}{2}}\right)^2\right)\right]\\
   &\quad-i\mu e^{i\partial_x^2 \tau}\left[\frac{i}{2}\overline{u^{n+\frac{1}{2}}}e^{-i\tau \partial_x^2}\left(e^{i\tau \partial_x^2}\partial_x^{-1}u^{n+\frac{1}{2}}\right)^2-\frac{i}{2}\overline{u^{n+\frac{1}{2}}}\left(\partial_x^{-1}u^{n+\frac{1}{2}}\right)^2-\tau|u^{n+\frac{1}{2}}|^2u^{n+\frac{1}{2}}\right]\\
   &\quad-i\mu\tau\left[\int_{\mathbb{T}}|u^{n+\frac{1}{2}}|^2u^{n+\frac{1}{2}}\dd x+\overline{\hat{u}^{n+\frac{1}{2}}_0}e^{i\partial_x^2 \tau}(u^{n+\frac{1}{2}})^2-\overline{\hat{u}^{n+\frac{1}{2}}_0}\int_{\mathbb{T}}\left(u^{n+\frac{1}{2}}\right)^2\dd x+2\hat{u}_0^{n+\frac{1}{2}}e^{i\partial_x^2 \tau}\left(|u^{n+\frac{1}{2}}|^2\right)\right.\\
   &\quad\quad\quad\quad\quad\quad\quad\left.-\left(\hat{u}^{n+\frac{1}{2}}_0\right)^2e^{i\partial_x^2 \tau}\overline{u^{n+\frac{1}{2}}}\right],
		\end{split}
\end{align}
where $u^{n+\frac{1}{2}}:=(u^n+\exp(-i\tau \partial_x^2)u^{n+1})/{2}$. We will study the convergence properties of this resonance-based midpoint rule in further detail in Sections~\ref{sec:convergence_analysis_KdV} \& \ref{sec:convergence_analysis_NLSE}.
\end{example}
In the next section we will see in particular that the resonance-based midpoint rule has the same low-regularity requirements for convergence as the second order scheme presented in \cite{bruned_schratz_2022}, meaning it converges in $H^{r},r>1/2,$ at order $\mathcal{O}(\tau^{\gamma})$ if the exact solution is at least in $H^{r+\gamma}$ for the NLSE and $H^{r+2\gamma}$ for the KdV equation, for $\gamma\in [0,2]$. 

The symplectic midpoint rule is, of course, just one example of a large number of symplectic low-regularity integrators in this class. By virtue of the construction, for methods of order greater than two, the regularity requirements will be slightly higher for RK resonance-based schemes than for direct explicit resonance-based constructions based on Duhamel iterates \cite{bruned_schratz_2022}. However, even for higher order methods the fact that the first iteration of Duhamel's formula is approximated using highly oscillatory quadrature techniques as shown in Section~\ref{sec:construction_of_rk_res_schemes} means the regularity requirements will still be lower than classical integrators including splitting methods and exponential integrators, even in the higher order setting. A large subclass of symplectic RK resonance-based schemes is given by diagonally implicit schemes:
\begin{example}[Diagonally implicit scheme with $S>0$] Motivated by diagonally implicit symplectic Runge--Kutta methods (cf. \cite[Section 3]{meng1992symplectic}), we can construct further symplectic resonance-based schemes, for example with the choice:
\begin{align*}
    S&=1,c_1=1, b^{0,1,0}=1,b^{0,1,1}=0,b^{1,1,0}=-1,b^{1,1,1}=1\\
    a^{0,1,0}_{0,1,0}&=\frac{1}{2},a^{0,1,0}_{0,1,1}=1,a^{0,1,0}_{1,1,0}=1,a^{0,1,0}_{1,1,1}=1,a^{1,1,0}_{1,1,0}=-\frac{1}{2},a^{1,1,0}_{1,1,1}=-1,a^{1,1,1}_{1,1,1}=\frac{1}{2}.
    \end{align*}
The method then takes the form
\begin{align*}
    u^{n+1}&=e^{i\tau{\partial_x^2}}u^n+\tau e^{i\tau{\partial_x^2}}K_{0,1,0}-\tau e^{i\tau{\partial_x^2}} K_{1,1,0}+\tau e^{i\tau{\partial_x^2}}K_{1,1,1}\\
    K_{0,1,0}&=\mathcal{F}_{0}^{[NLSE,2]}\left(\tau;1;u^n+\frac{1}{2}\tau K_{0,1,0}\right)\\
    K_{1,1,0}&=\mathcal{F}_{1}^{[NLSE,2]}\left(\tau;1;u^n+\tau K_{0,1,0}-\frac{1}{2}\tau K_{1,1,0}\right)\\
    K_{1,1,1}&=\mathcal{F}_{1}^{[NLSE,2]}\left(\tau;1;u^n+\tau K_{0,1,0}-\tau K_{1,1,0}+\frac{1}{2}\tau K_{1,1,1}\right).
\end{align*}
which leads to a second order scheme for the NLSE with similar convergence properties to the resonance-based midpoint rule (as studied in Theorem~\ref{thm:global_error_midpoint_NLSE}). The same choice of coefficients leads, of course, to a second order low-regularity symplectic scheme for the KdV equation as well.
\end{example}
\section{Convergence analysis}\label{sec:convergence_analysis}
{While there are some very recent approaches to structured error analysis for explicit low-regularity integrators \cite{bruned_schratz_2022}, the implicit case is generally much more challenging due to additional stability considerations in the solution of the implicit equations. In this section, we outline a general recipe which consists of three steps: (i) solubility of the implicit equations; (ii) stability analysis; and (iii) local error estimates, that then combine to a global convergence result. We exhibit this recipe in detail on two specific cases - the resonance-based midpoint rule from Example~\ref{ex:resonance-based_midpoint_rule} for both the KdV equation and the NLSE, before outlining how these ideas can be generalised to higher order RK resonance-based methods in Section~\ref{sec:general_idea_convergence_analysis}.}
\subsection{Convergence analysis of resonance-based midpoint rule in the KdV setting}\label{sec:convergence_analysis_KdV}
In this section we focus on the {error analysis of the resonance-based midpoint rule for the} KdV equation but a similar convergence analysis applies to the NLSE case as well and the main steps and differences in that analysis are given in Section~\ref{sec:convergence_analysis_NLSE}. It will be helpful for the analysis to consider the formulation in the twisted variable, whereby \eqref{eqn:resonance_based_midpoint_rule_u_KdV} becomes
\begin{align}\label{eqn:resonance_based_midpoint_rule_v_KdV}
v^{n+1}=v^{n}+\frac{1}{24}\e^{(t_n+\tau)\partial_x^3}\left(\e^{-(t_n+\tau)\partial_x^3}\partial_x^{-1}\left(v^{n+1}+v^n\right)\right)^2-\frac{1}{24}\e^{t_n\partial_x^3}\left(\e^{-t_n\partial_x^3}\partial_x^{-1}\left(v^{n+1}+v^n\right)\right)^2.
\end{align}

The main results in this section are then the following:
\begin{theorem}\label{thm:global_error_in_H1_first_order}Let us denote by $v(t)$ the exact solution to \eqref{eqn:interation_picture_kdv}, let $v^n, n\geq 0,$ be the iterates in the numerical method \eqref{eqn:resonance_based_midpoint_rule_v_KdV}, and let $t_n=n\tau$. {Given $T>0,R>0$} there is a $\tau_R>0$ such that for all $\tau\in [0,\tau_R)$, and as long as $\sup_{t\in [0,T]}\|v(t)\|_{H^3}<R/2$, we have
	\begin{align*}
		\|v(t_n)-v^{n}\|_{H^1}\leq \tau {c_{R,T}}, \ \forall 0\leq n\leq \left\lfloor \frac{T}{\tau}\right\rfloor,
	\end{align*}
	for some constant $c_{R,T}>0$ depending on {$R,T$, but which may be chosen independently of $\tau$.}
\end{theorem}
\begin{theorem}\label{thm:global_error_in_H1_second_order}
Let $v(t), v^n, n\geq0,$ and $t_n=n\tau$ be as in Theorem~\ref{thm:global_error_in_H1_first_order}. Then we have that given $T>0,R>0$ there is a $\tau_R>0$ such that for all $\tau\in [0,\tau_R)$, and as long as $\sup_{t\in [0,T]}\|v(t)\|_{H^5}<R/2$, we have
\begin{align*}
	\|v(t_n)-v^{n}\|_{H^1}\leq \tau^2{c_{R,T}}, \ \forall 0\leq n\leq \left\lfloor \frac{T}{\tau}\right\rfloor,
\end{align*}
for some constant ${c_{R,T}}>0$ depending on {$R,T$, but which may be chosen independently of $\tau$.}
\end{theorem}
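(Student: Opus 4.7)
The plan is to prove Theorem 4.4 by running the same Lady Windermere / telescoping framework used in the first-order result Theorem~\ref{thm:global_error_in_H1_first_order}, with two essential upgrades: a refined local error bound of order $\tau^{3}$ in $H^{1}$ that exploits the symmetric trapezoidal-type averaging built into our scheme, and the tracking of one additional Taylor term, which is what raises the required regularity from $H^{3}$ to $H^{5}$. First I would import, essentially unchanged from the first-order analysis, the $H^{1}$-stability estimate $\|\Phi_{\tau}(f)-\Phi_{\tau}(g)\|_{H^{1}}\leq (1+c_{R}\tau)\|f-g\|_{H^{1}}$ valid on a ball in $H^{3}$, together with the a priori boundedness of the iterates $v^{k}$ obtained from Theorem~\ref{thm:fixed_point_iterations_v} and a bootstrap against the exact bound $\|v(t)\|_{H^{5}}<R/2$. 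The telescoping identity then reduces the global bound to a sum over one-step local errors, and it remains to show $\|v(t_{n+1})-\Phi_{\tau}(v(t_{n}))\|_{H^{1}}\leq C\tau^{3}$.

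For the local error, write $\tilde{v}^{n+1}=\Phi_{\tau}(v(t_{n}))$. The $m$-th Fourier coefficient of $v(t_{n+1})-\tilde{v}^{n+1}$ equals
\begin{align*}
\sum_{a+b=m}\frac{im}{2}\e^{-it_{n}3mab}\int_{0}^{\tau}\e^{-is3mab}\,\Delta_{a,b}(s)\,\dd s,
\end{align*}
where $\Delta_{a,b}(s)=\hat{v}_{a}(t_{n}+s)\hat{v}_{b}(t_{n}+s)-\tfrac{1}{4}(\hat{v}_{a}(t_{n})+\hat{\tilde{v}}_{a}^{n+1})(\hat{v}_{b}(t_{n})+\hat{\tilde{v}}_{b}^{n+1})$. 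Using the $O(\tau)$ bound from Theorem~\ref{thm:fixed_point_iterations_v} I would first replace $\hat{\tilde{v}}_{\sigma}^{n+1}$ by $\hat{v}_{\sigma}(t_{n+1})$ at the cost of a correction already of local-error size (absorbable on the left-hand side for $\tau$ small). Then I would Taylor-expand both $\hat{v}_{\sigma}(t_{n}+s)$ and $\tfrac{1}{2}(\hat{v}_{\sigma}(t_{n})+\hat{v}_{\sigma}(t_{n+1}))$ around the midpoint $s=\tau/2$. The constant-in-$s$ part of $\Delta_{a,b}$ cancels by symmetry, leaving a leading piece $(s-\tau/2)(\partial_{t}\hat{v}_{a}\hat{v}_{b}+\hat{v}_{a}\partial_{t}\hat{v}_{b})(t_{n}+\tau/2)$ plus a Taylor remainder of size $O(\tau^{2}\partial_{t}^{2}\hat{v}_{\sigma})$.

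Next I would evaluate $\int_{0}^{\tau}(s-\tau/2)\e^{-is3mab}\dd s$ exactly; one obtains a kernel bounded by $\min(\tau^{3}|3mab|,\,\tau/|3mab|+1/|3mab|^{2})$. Combined with the prefactor $\tfrac{im}{2}$ and the sum over $a+b=m$, the small-$\omega$ regime of this kernel absorbs the $|m|$ loss through the factor $|3mab|$, while in the large-$\omega$ regime the gain $1/|3mab|$ combines with Lemma~\ref{lem:bilinear_estimates} to close the estimate in $H^{1}$. The $O(\tau^{2})$ Taylor-remainder piece of $\Delta_{a,b}$ integrates to $O(\tau^{3})$ trivially and is likewise controlled by applying Lemma~\ref{lem:bilinear_estimates} to $\partial_{t}^{2}\hat{v}_{\sigma}$. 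The crucial regularity bookkeeping is that the twisted equation \eqref{eqn:interation_picture_kdv} forces $\partial_{t}v$ to lose one spatial derivative and $\partial_{t}^{2}v$ to lose two, so $v\in H^{5}$ is exactly what is needed to place $\partial_{t}v\in H^{4}$ and $\partial_{t}^{2}v\in H^{3}$ and thereby close all bilinear estimates. Feeding the resulting $O(\tau^{3})$ local error into Lady Windermere with stability $(1+c_{R}\tau)$ yields the desired bound $\|v(t_{n})-v^{n}\|_{H^{1}}\leq c_{R,t_{n}}\tau^{2}$.

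The main obstacle will be the oscillatory integral analysis just described: the identity $\int_{0}^{\tau}(s-\tau/2)\dd s=0$ that underlies second-order accuracy of the classical midpoint rule is broken by the presence of $\e^{-is3mab}$, so one does not obtain a pointwise-in-$\omega$ $O(\tau^{3})$ gain. Instead the $\tau^{3}$ gain emerges only after matching the small-$\omega$ Taylor expansion of the kernel against its large-$\omega$ decay, and after absorbing the prefactor $|m|$ either via the $\omega=3mab$ factor in the low-frequency regime or via the bilinear and regularity structure in the high-frequency one. Making these interlocking regimes fit together, without incurring a derivative loss beyond what $H^{5}$ can absorb, is the key new technical work beyond the first-order proof.
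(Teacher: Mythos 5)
Your overall strategy coincides with the paper's: the global bound is obtained by the standard telescoping argument combining the $H^1$-stability estimate (Lemma~\ref{lem:stability_in_H1}) with an $O(\tau^3)$ local error in $H^1$, and the local error is extracted exactly as you describe --- the midpoint-symmetric average kills the zeroth-order term, the leading contribution is the oscillatory moment $\int_0^\tau(s-\tau/2)\e^{-is\,3mab}\dd s$, and the $H^5$ regularity is spent on absorbing the resulting factor $|3mab|$ (and on $\partial_t^2 v$) through the convolution estimate Lemma~\ref{lem:auxilliary_convolution_estimate}. Two cosmetic differences: the paper does not need your two-regime matching for the kernel --- the single global bound $\bigl|\tfrac{x\cos(x/2)-2\sin(x/2)}{x^2}\bigr|\leq|x|/12$ gives $|\text{kernel}|\lesssim\tau^3|3mab|$ for all frequencies, and the extra $|3mab|$ is then pushed onto $v$ and $\partial_t v$ in $H^4$; and your derivative count for $\partial_t^2 v$ is slightly off (it loses three derivatives, not two, e.g.\ via the term $\partial_x^2(\e^{\cdot}\partial_x v)^2$), though the conclusion that $H^5$ suffices, i.e.\ $\|\partial_t^2 v\|_{H^2}\lesssim\|v\|_{H^5}^2$, is still correct.

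There is, however, one step that does not work as stated: the replacement of $\hat{\tilde v}^{n+1}_\sigma=\widehat{\Phi_{t_n,\tau}(v(t_n))}_\sigma$ by $\hat v_\sigma(t_{n+1})$ ``at the cost of a correction of local-error size, absorbable on the left-hand side''. The correction is linear in $v(t_{n+1})-\Phi_{t_n,\tau}(v(t_n))$, but it sits inside $\partial_x(fg)$, so measuring it in $H^1$ costs the \emph{$H^2$}-norm of the local error: the correction is $\lesssim\tau\,\|v(t_{n+1})-\Phi_{t_n,\tau}(v(t_n))\|_{H^2}\,\|v\|_{H^2}$ and cannot be absorbed into the $H^1$-quantity on the left. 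The only $H^2$ local error bound available in the paper is the $O(\tau^{3/2})$ of Lemma~\ref{lem:local_error_estimate_H2}, which would only give $O(\tau^{5/2})$ here. This is repairable --- run the first-order argument of Lemma~\ref{lem:local_error_estimate_H1_first_order} one derivative higher to get an $O(\tau^2)$ local error in $H^2$ for $H^4\supset H^5$ data, which then makes the correction $O(\tau^3)$ --- but you would need to prove that auxiliary estimate. The paper avoids the issue entirely: it never compares $\Phi_{t_n,\tau}(v(t_n))$ with $v(t_{n+1})$ inside the nonlinearity, but instead writes $\tfrac12\bigl(v(t_n)+\Phi_{t_n,\tau}(v(t_n))\bigr)-v(t_n+\tau/2)=D_1$ using the scheme's own fixed-point equation and Duhamel's formula, and shows $\|D_1\|_{H^2}=O(\tau^2)$ directly (the quadrature-difference piece $D_{1,1}$ being controlled by the kernel $(\e^{-i\tau mab/2}-1)^2/(mab)\lesssim\tau^2|m||a||b|$ together with Lemma~\ref{lem:auxilliary_convolution_estimate}). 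You should either adopt that decomposition or supply the sharpened $H^2$ local error bound; otherwise the argument as written loses half a power of $\tau$.
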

Of course, the isomorphism properties of the twisting map $u(t,x)\mapsto v(t,x)=\exp(t\partial_x^3)u(t,x)$ imply that the analogous results hold true also for the variable $u$ and the method \eqref{eqn:resonance_based_midpoint_rule_u_KdV}:

\begin{cor}\label{cor:convergence_result_res_based_midpoint_KdV_u}
Let $u(t)$ be the exact solution to \eqref{eqn:standard_IVP_KdV} and be $u^n, n\geq 0,$ the iterates in the numerical method \eqref{eqn:resonance_based_midpoint_rule_u_KdV}, and $t_n=n\tau$. Given $T>0,R>0$ there is a $\tau_R>0$ such that for all $\tau\in [0,\tau_R)$:
\begin{itemize}
	\item If $\sup_{t\in [0,T]}\|u(t)\|_{H^3}<R/2$, we have
	\begin{align*}
	\|u(t_n)-u^{n}\|_{H^1}\leq \tau {c_{R,T}}, \ \forall 0\leq n\leq \left\lfloor \frac{T}{\tau}\right\rfloor,
	\end{align*}
	\item if $\sup_{t\in [0,T]}\|u(t)\|_{H^5}<R/2$, we have
\begin{align*}
\|u(t_n)-u^{n}\|_{H^1}\leq \tau^2 {C_{R,T}}, \ \forall 0\leq n\leq \left\lfloor \frac{T}{\tau}\right\rfloor,
\end{align*}
\end{itemize} 
for some constants ${C_{R,T}},{c_{R,T}}$ depending on {$R,T$, but which may be chosen independently of $\tau$.}
\end{cor}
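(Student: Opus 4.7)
The plan is to transfer the two convergence bounds of Theorems~\ref{thm:global_error_in_H1_first_order} and \ref{thm:global_error_in_H1_second_order} from the twisted variable $v$ back to the original unknown $u$ by exploiting the fact that the Fourier multiplier $\exp(\pm t\partial_x^3)$ is an isometry on every Sobolev space $H^s$. The first step is to verify the discrete twist identity $v^n = \exp(t_n\partial_x^3)u^n$ by induction on $n$: at $n=0$ it holds by the convention $v^0 = u^0 = u_0$, and the passage from Eq.~\eqref{eqn:resonance_based_midpoint_rule_v} to Eq.~\eqref{eqn:resonance_based_midpoint_rule_u} carried out in Section~\ref{sec:devScheme} shows that this relation propagates to the $(n+1)$-st iterate, provided both implicit equations are uniquely solvable, which is guaranteed by Theorem~\ref{thm:fixed_point_iterations_v} and Corollary~\ref{cor:fixed_point_iterations_u} for $\tau$ below a common threshold $\tau_R>0$.

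Once this correspondence is in hand, the isometry property gives for every $s\geq 0$ and every $t\geq 0$ the identity $\|u(t)\|_{H^s}=\|v(t)\|_{H^s}$, so the regularity hypotheses $\sup_{t\in[0,t_n]}\|u(t)\|_{H^3}<R/2$ and $\sup_{t\in[0,t_n]}\|u(t)\|_{H^5}<R/2$ coincide with the corresponding hypotheses for $v$ that appear in the two theorems. Applying the same isometry to the error, one finds
\[
\|u(t_n)-u^n\|_{H^1} = \left\|\e^{-t_n\partial_x^3}\bigl(v(t_n)-v^n\bigr)\right\|_{H^1} = \|v(t_n)-v^n\|_{H^1},
\]
so the two bounds follow immediately, with the constants $c_{R,t_n}$ and $C_{R,t_n}$ inherited directly from the underlying theorems for $v$ without any further dependence on $\tau$.

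There is no real obstacle beyond carefully establishing the discrete twist identity; every other ingredient is pure bookkeeping. In particular, the fixed-point radius controlling uniqueness of $u^{n+1}$ in Corollary~\ref{cor:fixed_point_iterations_u} and of $v^{n+1}$ in Theorem~\ref{thm:fixed_point_iterations_v} is defined in terms of an $H^l$-norm preserved by the twist, so one may simply take the smaller of the two time-step thresholds provided by the preceding results and obtain a single $\tau_R$ valid for both statements of the corollary simultaneously.
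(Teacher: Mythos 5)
Your proposal is correct and follows essentially the same route as the paper, which justifies the corollary in one line by appealing to the isomorphism (isometry) properties of the twisting map $u\mapsto v=\exp(t\partial_x^3)u$ on every $H^s$; your additional verification of the discrete twist identity $v^n=\exp(t_n\partial_x^3)u^n$ via the derivation of Eq.~\eqref{eqn:resonance_based_midpoint_rule_u} from Eq.~\eqref{eqn:resonance_based_midpoint_rule_v} and the unique solvability of both implicit equations is exactly the bookkeeping the paper leaves implicit.
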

We note in particular that the regularity requirement in the second order estimate in Corollary~\ref{cor:convergence_result_res_based_midpoint_KdV_u} presents a non-trivial improvement over what might classically be expected - indeed had we not incorporated the resonance-structure in our design of the method we should see that second order convergence in $H^1$ can only be achieved for solutions in $H^6$ (as is the case for example with Strang splitting, cf. \cite{holden2011operator,holden2013operator}). 

Parts of our error analysis will follow the ideas in \cite{hofmanova2017exponential}. {However, we need to account for the implicit nature of our method and make use of novel estimates for certain integrals arising from Duhamel's formula to understand the second order convergence properties of this scheme. This particular aspect of the error analysis requires a novel approach, a `mild form' of the classical Gaussian quadrature analysis of the midpoint rule.}

We will present the proof of Theorems~\ref{thm:global_error_in_H1_first_order} \& \ref{thm:global_error_in_H1_second_order} in Section~\ref{sec:error_analysis_in_H1}, as a result of the lemmas introduced and proved in the following sections.

\begin{remark}
	We note that under Assumption~\ref{assumption:zero_mass}, for any $s\in\mathbb{N}$, the norm $\|\,\cdot\,\|_{H^s}$ on the quotient space $H^s/\langle 1\rangle$ is equivalent to $\|\partial_x^s\,\cdot\,\|_{H^0}$ and to the definition in terms of Fourier modes
	\begin{align*}
	\|g\|_{H^s}:=\left(\sum_{m\in\mathbb{Z}\setminus\{0\}}|m|^{2s}|\hat{g}_m|^2\right)^{\frac{1}{2}}.
	\end{align*}
	Therefore we will use these three notations interchangeably throughout the present section.
\end{remark}
\subsubsection{Remarks on the implicit nature of the symplectic resonance-based scheme}\label{sec:properties_of_implicit_equations}
In contrast to classical resonance-based methods \cite{Bronsardbruned2022,bruned_schratz_2022,ostermann2018low,roussetschratz2021,rousset2021general}, which are all explicit, all symplectic schemes presented in the present work are implicit (cf. Corollary~\ref{cor:symplectic_RK_res_are_implicit}).

The implicit nature of the method brings about novel challenges, such as the solution of a nonlinear equation at every time step and the stability analysis of the method. Through rigorous and careful analysis we are able to prove that fixed-point iterations yield a satisfactory means for solving the nonlinear system and derive stability and convergence results of the implicit method \eqref{eqn:resonance_based_midpoint_rule_u_KdV}. A particular strength of this approach is that no CFL condition needs to be imposed on the time-step and spatial discretisation. This results in a method that is truly able to resolve low-regularity solutions, unlike Runge--Kutta methods and {even exponential integrators, the latter of which typically rely on the weaker CFL condition $\Delta t\lesssim \Delta x$ due the Burgers type nonlinearity in the KdV equation. {In particular, to the best of our knowledge \eqref{eqn:resonance_based_midpoint_rule_u_KdV} is, in fact,} the first structure-preserving integrator for the KdV equation which does not require a CFL condition (cf. \cite{ascher2005symplectic,celledoni2008symmetric}).

In this section we show how one may efficiently solve the implicit equation in our scheme \eqref{eqn:resonance_based_midpoint_rule_u_KdV}  with fixed-point iterations at every time step. In practical implementation it is found (cf. Section~\ref{sec:numerical_experiments}) that even for moderate timesteps only a small number of fixed-point iterates is required for convergence. For the analysis let us define the following map:
\begin{align*}
\mathcal{S}_1(\tilde{v}):=v^{n}+\frac{1}{24}\e^{(t_n+\tau)\partial_x^3}\left(\e^{-(t_n+\tau)\partial_x^3}\partial_x^{-1}\left(\tilde{v}+v^n\right)\right)^2-\frac{1}{24}\e^{t_n\partial_x^3}\left(\e^{-t_n\partial_x^3}\partial_x^{-1}\left(\tilde{v}+v^n\right)\right)^2.
\end{align*}
The main result concerning the solution of \eqref{eqn:resonance_based_midpoint_rule_v_KdV} is then the following.
\begin{theorem}\label{thm:fixed_point_iterations_v}
	Fix $l\in \{1,2,3\}$ and $R>0$. Then there is a $\tau_R>0$ such that for all $\tau\in [0,\tau_R)$ and any $v^n\in B_R(H^l):=\{\tilde{v}\in H^l\,\vert\, \|\tilde{v}\|_{H^l}<R\}$ we have $v^{n+1}$ the exact solution of \eqref{eqn:resonance_based_midpoint_rule_v_KdV} is given by the following limit in $H^l$:
	\begin{align}\label{eqn:limit_expression_v^n+1}
	v^{n+1}=\lim_{j\rightarrow\infty}\mathcal{S}_1^{(j)}(v^n),\quad\text{where\ \ }\mathcal{S}_1^{(j)}(v^n)=\underbrace{\mathcal{S}_1\circ\cdots\circ\mathcal{S}_1}_{j-\text{times}}(v^n).
	\end{align}
	{Moreover, we have the estimate
	\begin{align}\label{eqn:distance_next_time_step_from_previous_Hl}
	\left\|v^{n+1}-v^n\right\|_{H^l}\leq \tau^{\frac{1}{2}} \tilde{C}_R,
	\end{align}
	and, if additionally $v^n\in B_{R}(H^{l+1})$,
	\begin{align}\label{eqn:distance_next_time_step_from_previous_Hl+1}
	\left\|v^{n+1}-v^n\right\|_{H^l}\leq \tau \tilde{C}_R,
	\end{align}
	for some $\tilde{C}_R>0$ which depends only on $R$ and $l$.}
\end{theorem}

Note an analogous result holds true for the numerical method in $u$, \eqref{eqn:resonance_based_midpoint_rule_u_KdV}, and is proven in Corollary~\ref{cor:fixed_point_iterations_u}. In order to prove Theorem~\ref{thm:fixed_point_iterations_v} we will rely on the following {two lemmas.}

\begin{lemma}\label{lem:stability_lemma_1} Let us introduce the notation
	\begin{align}\label{eqn:def_of_mathcalF}
	\mathcal{G}(t_n,\tau,\tilde{v}):=\frac{1}{6}\e^{(t_n+\tau)\partial_x^3}\left(\e^{-(t_n+\tau)\partial_x^3}\partial_x^{-1}\tilde{v}\right)^2-\frac{1}{6}\e^{t_n\partial_x^3}\left(\e^{-t_n\partial_x^3}\partial_x^{-1}\tilde{v}\right)^2.
	\end{align}
	Then, for $l=1,2,3$, there is a continuous function $M_l:\mathbb{R}_{\geq0}\times\mathbb{R}_{\geq0}\rightarrow \mathbb{R}_{\geq0}$ such that
	\begin{align*}
	\| \mathcal{G}(t_n,\tau,f)-\mathcal{G}(t_n,\tau,g)\|_{H^l}\leq {\tau^{\frac{1}{2}}} M_l\left(\|f\|_{H^l},\|g\|_{H^l}\right)\|f-g\|_{H^l}.
	\end{align*}
\end{lemma}
\begin{proof}[Proof of Lemma~\ref{lem:stability_lemma_1}]
	For the case $l=1$ see the proof of \cite[Eq.~(38)]{hofmanova2017exponential} and for the case $l=2$ see the proof of \cite[Lemma~2.4]{hofmanova2017exponential}. The case $l=3$ follows in a similar way, and for completeness we have included the proof in Appendix~\ref{app:proof_of_stability_lemma_1}.
\end{proof}
{Finally, we can establish the central contraction mapping property which facilitates the proof of theorem~\ref{thm:fixed_point_iterations_v}.
\begin{lemma}\label{lem:contraction_mapping_property}
    For any $R>0$ there is a constant $\tau_R>0$ such that for all $\tau\in [0,\tau_R)$ the following is true. If $v^n,f,g\in H^l$ are such that $\|f\|_{H^l},\|g\|_{H^l}<2R, \|v^n\|_{H^l}<R$ then 
    \begin{align*}
    \left\|\mathcal{S}_1(f)-\mathcal{S}_1(g)\right\|_{H^l}\leq \frac{1}{2}\|f-g\|_{H^l}.
    \end{align*}
\end{lemma}
\begin{proof}
    By Lemma~\ref{lem:stability_lemma_1} we have
    \begin{align*}
    \left\|\mathcal{S}_1(f)-\mathcal{S}_1(g)\right\|_{H^l}&=\left\|\mathcal{G}\left(\tau,\frac{f+v^n}{2}\right)-\mathcal{G}\left(\tau,\frac{g+v^n}{2}\right)\right\|_{H^l}\\
    &\leq \frac{{\tau^{\frac{1}{2}}}}{2} M_{l}\left(\left\|\frac{v^n+f}{2}\right\|_{H^l},\left\|\frac{v^n+g}{2}\right\|_{H^l}\right)\|f-g\|_{H^l}.
    \end{align*}
    Since $M_l$ is continuous, we have $\tilde{M}_l:=\sup_{|a|,|b|<2R}M_l(a,b)<\infty$ and the claim follows by taking {$\tau_R=\tilde{M}_l^{-2}$}.
\end{proof}
}
\begin{proof}[Proof of Theorem~\ref{thm:fixed_point_iterations_v}]
	Our goal is to apply a contraction mapping argument for $\tau\in [0,\tau_R),$ with $\tau_R>0$ sufficiently small. {Letting again $\tilde{M}_l:=\sup_{|a|,|b|<2R}M_l(a,b)<\infty$ we have, by Lemma~\ref{lem:stability_lemma_1},
\begin{align}\label{eqn:stability_estimate_tau_half}
\|\mathcal{S}_1(v^n)-v^n\|_{H^l}\leq \tau^{\frac{1}{2}} \tilde{M}_l\left\|v^n\right\|_{H^l}.
\end{align}
Thus $\|\mathcal{S}_1(v^n)\|_{H^r}\leq (1+\tau^{1/2} \tilde{M}_l)\left\|v^n\right\|_{H^l}$, and so if we let $\tau_R=\tilde{M}_l^{-2}/4$ we find by induction on $J${, and using Lemma~\ref{lem:contraction_mapping_property},}
\begin{align}\label{eqn:contraction_sum}
\|\mathcal{S}_1^{(J)}(v^n)-v^n\|_{H^l}&\leq \sum_{j=0}^{J-1}\|\mathcal{S}^{(j+1)}(v^n)-\mathcal{S}^{(j)}(v^n)\|_{H^l}\leq\|\mathcal{S}_1(v^n)-v^n\|_{H^l}\sum_{j=0}^{J-1}2^{-j}\leq2\|\mathcal{S}_1(v^n)-v^n\|_{H^l},\\\nonumber
 \text{and}\quad \|\mathcal{S}_1^{(J)}(v^n)\|&<R+2\|\mathcal{S}_1(v^n)-v^n\|_{H^l}<2R,
\end{align}
for all $J\in\mathbb{N}$.} Thus $\left\langle \mathcal{S}_1^{(j)}(v^n)\right\rangle_{j\in\mathbb{N}}$ is a Cauchy sequence and its limit in $H^l$ is a fixed point of $\mathcal{S}_1$, hence \eqref{eqn:limit_expression_v^n+1} follows.

{Taking $J\rightarrow \infty$ in \eqref{eqn:contraction_sum} we find
\begin{align}\label{eqn:generic_estimate_distance_previous_to_current}
    \|v^{n+1}-v^n\|_{H^l}\leq 2\|\mathcal{S}_1(v^n)-v^n\|_{H^l}.
\end{align}
The estimate \eqref{eqn:distance_next_time_step_from_previous_Hl} follows then by combining \eqref{eqn:stability_estimate_tau_half} and \eqref{eqn:generic_estimate_distance_previous_to_current}.} {We now have by the construction of the resonance-based method \eqref{eqn:resonance_based_midpoint_rule_v_KdV}:
\begin{align}\nonumber
    \left\|S_1(v^n)-v^n\right\|_{H^l}&=\left\|\frac{1}{2}\int_{0}^{\tau}\e^{(t_k+s)\partial_x^3}\partial_x\left(\e^{-(t_k+s)\partial_x^3}v^n\right)^2\dd s\right\|_{H^l}\\\label{eqn:stability_estimate_tau_one}
    &\leq \frac{1}{2}\int_{0}^{\tau}\left\|\left(\e^{-(t_k+s)\partial_x^3}v^n\right)^2\right\|_{H^{l+1}}\dd s\leq C_l \tau \|v^n\|_{H^{l+1}}^2,
\end{align}
for some constant $C_l>0$ independent of $v$, where in the final line we made use of the bilinear estimates Lemma~\ref{lem:bilinear_estimates}. Combining \eqref{eqn:stability_estimate_tau_one} and \eqref{eqn:generic_estimate_distance_previous_to_current} implies \eqref{eqn:distance_next_time_step_from_previous_Hl+1}.}
\end{proof}
The results of Theorem~\ref{thm:fixed_point_iterations_v} extend directly to the solution $u^{n+1}$ of \eqref{eqn:resonance_based_midpoint_rule_u_KdV}. For this let us introduce the map
\begin{align*}
\mathcal{S}_2(\tilde{u}):=\e^{-\tau\partial_x^3}u^{n}+\frac{1}{24}\left(\partial_x^{-1}\tilde{u}+\e^{-\tau\partial_x^3}\partial_x^{-1}u^{n}\right)^2-\frac{1}{24}\e^{-\tau\partial_x^3}\left(\e^{\tau \partial_x^3}\partial_x^{-1}\tilde{u}+\partial_x^{-1}u^{n}\right)^2.
\end{align*}
{\begin{cor}\label{cor:fixed_point_iterations_u}
		Fix $l\in \{1,2,3\}$ and $R>0$. Then there is a $\tilde{\tau}_R>0$ such that for all $\tau\in [0,\tilde{\tau}_R)$ and any $u^n\in B_R(H^l):=\{\tilde{u}\in H^l\,\vert\, \|\tilde{u}\|_{H^l}<R\}$ we have $u^{n+1}$ the exact solution of \eqref{eqn:resonance_based_midpoint_rule_u_KdV} is given by the following limit in $H^l$:
		\begin{align}\label{eqn:limit_expression_u^n+1}
		u^{n+1}=\lim_{j\rightarrow\infty}\mathcal{S}_2^{(j)}(u^n),\quad\text{where\ \ }\mathcal{S}_2^{(j)}(u^n)=\underbrace{\mathcal{S}_2\circ\cdots\circ\mathcal{S}_2}_{j-\text{times}}(u^n).
		\end{align}
		Moreover we have the estimate
		\begin{align*}
		\left\|u^{n+1}-u^n\right\|_{H^l}\leq \tau^{\frac{1}{2}} \tilde{C}_R,
		\end{align*}
		and, if additionally $u^n\in B_R(H^{l+1})$,
		\begin{align*}
		\left\|u^{n+1}-u^n\right\|_{H^l}\leq \tau \tilde{C}_R,
		\end{align*}
		for some $\tilde{C}_R>0$ which depends on $R$ and $l$.
\end{cor}}
\begin{proof}
	Let us define
\begin{align*}
\tilde{\mathcal{G}}(t_n,\tau,\tilde{u}):=\e^{-(t_n+\tau)\partial_x^3}\mathcal{G}(t_n,\tau,\e^{t_n\partial_x^3}\tilde{u}).
\end{align*}
Then, by recalling $u^n=\exp(-t_n\partial_x^3)v^n$, we can express $\mathcal{S}_2$ as
\begin{align*}
\mathcal{S}_2(\tilde{u})=\e^{-\tau\partial_x^3}u^n+\tilde{\mathcal{G}}\left(\tau,\frac{\e^{\tau\partial_x^3}\tilde{u}+u^n}{2} \right).
\end{align*}
Now, since $v\mapsto \exp(\pm t\partial_x^3)v$ is an isometry on $H^l$ we have from Lemma~\ref{lem:stability_lemma_1} that
\begin{align}\label{eqn:stability_lemma_for_u_implicit}
\| \tilde{\mathcal{G}}(t_n,\tau,f)-\tilde{\mathcal{G}}(t_n,\tau,g)\|_{H^l}\leq  {\tau^{\frac{1}{2}}} M_l\left(\|f\|_{H^l},\|g\|_{H^l}\right)\|f-g\|_{H^l}.
\end{align}
Hence the result follows by taking exactly the same steps as in the proof of Theorem~\ref{thm:fixed_point_iterations_v} but by replacing the use of Lemma~\ref{lem:stability_lemma_1} with the estimate \eqref{eqn:stability_lemma_for_u_implicit}.
\end{proof}

\subsubsection{Error analysis in $H^2$}\label{sec:error_analysis_in_H2}
As a first step in our proof we need to establish the boundedness of our numerical solution in $H^2$. In this section we will prove convergence and hence boundedness of the numerical solution in $H^2$ for initial data that lies in $H^3$. The stability of our numerical scheme is proved in Lemma~\ref{lem:stability_estimate_implicit_H2}, the local error bound is given in Lemma~\ref{lem:local_error_estimate_H2} and the global error bound is given in Theorem~\ref{thm:global_error_in_H2}.

\paragraph{Stability {in $H^2$}.}
Let us denote by $\Phi_{t_n,\tau}:H^l\rightarrow H^l$ the nonlinear solution map of \eqref{eqn:resonance_based_midpoint_rule_v_KdV}, i.e. let $\Phi_{t_n,\tau}$ be such that 
\begin{align*}
v^{n+1}=\Phi_{t_n,\tau}(v^n).
\end{align*}
We can then show the following stability estimate:
\begin{lemma}\label{lem:stability_estimate_implicit_H2}
	Fix $R>0$. Then there is a $\tau_R>0$ such that for all $\tau\in [0,\tau_R)$ and any $f\in B_R(H^2), g\in B_R(H^3)$ we have
	\begin{align*}
	\|\Phi_{t_n,\tau}(f)-\Phi_{t_n,\tau}(g)\|_{H^2}\leq \exp(\tau \tilde{C}_R)\|f-g\|_{H^2},
	\end{align*}
	where $\tilde{C}_R>0$ depends only on $R$.
\end{lemma}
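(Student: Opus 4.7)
The plan is to derive the stability estimate from the fixed-point characterisation of $\Phi_{t_n,\tau}$ given by Thm.~\ref{thm:fixed_point_iterations_v}. Writing $\tilde{f}=\Phi_{t_n,\tau}(f)$ and $\tilde{g}=\Phi_{t_n,\tau}(g)$, the theorem applied at regularity levels $l=2$ and $l=3$ guarantees that for $\tau<\tau_R$ small enough both iterates exist uniquely, $\tilde{f},\tilde{g}\in B_{2R}(H^2)$, and additionally $\tilde{g}\in H^3$ with $\|\tilde{g}\|_{H^3}\leq C_R$ (applying the fixed point theorem at $l=3$, for which $g\in B_R(H^3)$ is precisely the required data). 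Consequently the midpoints $\tilde{F}=(f+\tilde{f})/2$, $\tilde{G}=(g+\tilde{g})/2$, and the difference $W=\tilde{F}-\tilde{G}$ satisfy $\|\tilde{G}\|_{H^3}\leq C_R'$, $\|W\|_{H^2}\leq 2R$, and $W=\tfrac12\bigl((f-g)+(\tilde{f}-\tilde{g})\bigr)$. Subtracting the two fixed-point equations yields
\begin{align*}
\tilde{f}-\tilde{g}=(f-g)+\mathcal{F}(t_n,\tau,\tilde{F})-\mathcal{F}(t_n,\tau,\tilde{G}).
\end{align*}

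The heart of the argument is the ``one-sided'' bilinear estimate
\begin{align}\label{eqn:plan_key_stability}
\|\mathcal{F}(t_n,\tau,\tilde{F})-\mathcal{F}(t_n,\tau,\tilde{G})\|_{H^2}\leq \tau\,\widetilde{M}_R\,\|W\|_{H^2},
\end{align}
carrying the full power of $\tau$, in contrast to the generic $\tau^{1/2}$ provided by Lemma~\ref{lem:stability_lemma_1}. The full $\tau$ factor is achievable precisely because one of the midpoints, $\tilde{G}$, lies in $H^3$. To establish Eq.~\eqref{eqn:plan_key_stability}, I would decompose algebraically $\tilde{F}^2-\tilde{G}^2=2\tilde{G}W+W^2$ and correspondingly split $\mathcal{F}(\tilde{F})-\mathcal{F}(\tilde{G})=J_1+J_2$, where
\begin{align*}
J_1 &= \int_0^\tau \e^{(t_n+s)\partial_x^3}\partial_x\bigl(\e^{-(t_n+s)\partial_x^3}\tilde{G}\cdot \e^{-(t_n+s)\partial_x^3}W\bigr)\,\dd s,\\
J_2 &= \tfrac12\int_0^\tau \e^{(t_n+s)\partial_x^3}\partial_x\bigl(\e^{-(t_n+s)\partial_x^3}W\bigr)^2\,\dd s.
\end{align*}
For $J_1$, the Fourier representation has multiplier $(1-\e^{-i\tau 3mab})/(3ab)$ acting on $\hat{\tilde{G}}_a\hat{W}_b$. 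The smoothing $1/(3ab)$ combined with $|1-\e^{-i\tau 3mab}|\leq 3\tau|mab|$ produces an effective symbol bounded by $\tau|m|$, so that $|m|^2|\hat{J}_1(m)|\leq\tau|m|^3\bigl(|\hat{\tilde{G}}|*|\hat{W}|\bigr)(m)$. An asymmetric bilinear estimate (Lemma~\ref{lem:bilinear_estimates}) that places the three derivatives on the ``smooth'' slot $\tilde{G}$ then yields $\|J_1\|_{H^2}\leq \tau\,C_1(R)\,\|W\|_{H^2}$. For $J_2$, where no higher-regularity factor is available, I would proceed by a frequency-wise splitting of the Fourier sum: in the non-oscillatory regime $\tau|mab|\leq 1$ use $|1-\e^{-i\tau 3mab}|\leq 3\tau|mab|$ to extract the $\tau$ directly; in the oscillatory regime $\tau|mab|>1$ use $|1-\e^{-i\tau 3mab}|\leq 2$ combined with the inequality $1/|ab|<\tau|m|$, which produces a $\tau$ factor without requiring $W\in H^3$. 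The a priori bound $\|W\|_{H^2}\leq 2R$ then closes the estimate as $\|J_2\|_{H^2}\leq \tau\,C_2(R)\,\|W\|_{H^2}$, and summing gives Eq.~\eqref{eqn:plan_key_stability}.

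Finally, substituting Eq.~\eqref{eqn:plan_key_stability} and $\|W\|_{H^2}\leq \tfrac12(\|f-g\|_{H^2}+\|\tilde{f}-\tilde{g}\|_{H^2})$ into the fixed-point identity gives
\begin{align*}
\|\tilde{f}-\tilde{g}\|_{H^2}\leq \|f-g\|_{H^2}+\tfrac{\tau\widetilde{M}_R}{2}\bigl(\|f-g\|_{H^2}+\|\tilde{f}-\tilde{g}\|_{H^2}\bigr),
\end{align*}
and after shrinking $\tau_R$ so that $\tau\widetilde{M}_R<1$, rearranging yields $\|\tilde{f}-\tilde{g}\|_{H^2}\leq\frac{1+\tau\widetilde{M}_R/2}{1-\tau\widetilde{M}_R/2}\|f-g\|_{H^2}$; the elementary bound $(1+x)/(1-x)\leq \e^{3x}$ for $x\in[0,1/2]$ then delivers the claimed $\exp(\tau\tilde{C}_R)$ Lipschitz constant with $\tilde{C}_R=3\widetilde{M}_R/2$. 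The main obstacle is obtaining the full $\tau$ gain in Eq.~\eqref{eqn:plan_key_stability}, particularly within $J_2$ where no extra regularity can be exploited; this step relies squarely on the resonance structure $1/(3ab)$ of the oscillatory integral, rather than on the generic bilinear estimate of Lemma~\ref{lem:stability_lemma_1}.
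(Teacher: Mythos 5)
Your overall architecture (fixed-point identity, Lipschitz bound on the $\mathcal{F}$-difference, rearrangement, $\frac{1+x/2}{1-x/2}\le \e^{x}$) matches the paper's, but the central analytic claim on which everything rests --- that $\|\mathcal{F}(t_n,\tau,\tilde{F})-\mathcal{F}(t_n,\tau,\tilde{G})\|_{H^2}\le \tau\,\widetilde{M}_R\|W\|_{H^2}$ with a \emph{full} power of $\tau$ --- is not provable by your argument and is in fact false as a norm estimate. Look at your treatment of $J_2$: in the non-oscillatory regime the symbol $|1-\e^{-i\tau 3mab}|/|ab|$ is bounded by $3\tau|m|$, and in the oscillatory regime your substitution $1/|ab|<\tau|m|$ produces \emph{exactly the same} bound $\lesssim\tau|m|$. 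In both regimes you are therefore left with controlling $\sum_{a+b=m}|m|^{3}|\hat W_a||\hat W_b|$ in $\ell^2_m$, which by Lemma~\ref{lem:auxilliary_convolution_estimate} (or any Young-type distribution of the weight $|m|^3\lesssim |a|^3+|b|^3$) costs three derivatives on $W$ --- i.e.\ it requires $W\in H^3$, which you do not have. The parenthetical ``without requiring $W\in H^3$'' is not substantiated; the splitting merely converts the oscillatory regime into the non-oscillatory one. Worse, the estimate cannot be repaired: take $W$ with a unit-size low mode and a high mode at frequency $N\sim\tau^{-1/2}$ of amplitude $R/N^2$ (so $\|W\|_{H^2}\sim R$). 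The high--low interaction at output frequency $m\sim N$ has $|ab|\sim N$ and phase $\tau|mab|\sim\tau N^2\sim 1$, so no oscillatory gain is available, and one computes $\|J_2\|_{H^2}\gtrsim R^2\tau^{1/2}$, which is not $O(\tau)\|W\|_{H^2}$. The same high-frequency-on-the-rough-slot obstruction defeats your ``asymmetric'' estimate for $J_1$ when $W$ carries the high frequency and $\tilde G$ the low one. This is precisely why Lemma~\ref{lem:stability_lemma_1} only delivers $\tau^{1/2}$.

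The paper circumvents this by never proving a full-$\tau$ \emph{norm} bound. It expands the \emph{squared} $H^2$ norm of $\Phi_{t_n,\tau}(f)-\Phi_{t_n,\tau}(g)$, so that the difference of the nonlinearities enters in two ways: (i) as a cross term paired in an $L^2$ inner product against $\partial_x^2(\tilde F-\tilde G)$, where the quasilinear energy cancellation of Lemma~\ref{lem:auxillary_stability_estimate_hofmanova} (integration by parts moving the worst derivative onto the $H^3$ function) yields a genuine factor $\tau$; and (ii) as a pure square, where the $\tau^{1/2}$ of Lemma~\ref{lem:stability_lemma_1} automatically becomes $\tau=(\tau^{1/2})^2$. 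You should restructure your proof around this energy-method identity for $\|\cdot\|_{H^2}^2$ rather than attempting a direct Lipschitz bound on $\mathcal{F}$; as written, the key step fails.
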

In order to prove this result we rely on the following bound from \cite{hofmanova2017exponential}:
{\begin{lemma}\label{lem:auxillary_stability_estimate_hofmanova}
Let $\mathcal{F}$ be defined as in \eqref{eqn:def_of_mathcalF}. Then, there is a continuous function $L:\mathbb{R}_{\geq 0}\times \mathbb{R}_{\geq 0}\rightarrow \mathbb{R}_{\geq 0}$ such that for any $f\in H^{2},g\in H^3$ and any $t_n\geq 0$ we have
\begin{align*}
    \left|\left\langle\partial_x^2\left(\mathcal{G}(t_n,\tau,f)-\mathcal{G}(t_n,\tau,g)\right),\partial_x^2(f-g)\right\rangle\right|\leq \tau L(\|f\|_{H^2},\|g\|_{H^3})\|f-g\|_{H^2}^2,
\end{align*}
where by $\langle\,\cdot\,,\,\cdot\,\rangle$ we denoted the usual $L^2$-inner product.
\end{lemma}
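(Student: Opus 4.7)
The plan is to rewrite $\mathcal{F}(t_n,\tau,\cdot)$ as a time integral over $[0,\tau]$ so that the prefactor $\tau$ appears naturally from integration in $s$, and then to exploit the asymmetric regularity hypothesis $f\in H^2$, $g\in H^3$ by redistributing the three derivatives in $\partial_x^3[\ldots]$ so that at most one of them ever lands on the $H^2$-argument $f-g$.

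First, either by applying the fundamental theorem of calculus directly to the definition of $\mathcal{F}$, or equivalently in Fourier space via the identity $(a+b)^3-a^3-b^3=3(a+b)ab$, one obtains
$$\mathcal{F}(t_n,\tau,\tilde{v}) = \frac{1}{2}\int_0^\tau \e^{(t_n+s)\partial_x^3}\partial_x\left(\e^{-(t_n+s)\partial_x^3}\tilde{v}\right)^2\dd s.$$
Setting $F(s):=\e^{-(t_n+s)\partial_x^3}f$, $G(s):=\e^{-(t_n+s)\partial_x^3}g$, and $H(s):=F(s)-G(s)=\e^{-(t_n+s)\partial_x^3}(f-g)$, the factorisation $F^2-G^2=(F+G)H$, the antisymmetry $(\partial_x^3)^*=-\partial_x^3$, and the commutation of $\partial_x^2$ with $\e^{\pm t\partial_x^3}$ reduce the inner product to
$$\left\langle\partial_x^2\bigl(\mathcal{F}(t_n,\tau,f)-\mathcal{F}(t_n,\tau,g)\bigr),\partial_x^2(f-g)\right\rangle = \frac{1}{2}\int_0^\tau\left\langle\partial_x^3\bigl[(F(s)+G(s))H(s)\bigr],\partial_x^2 H(s)\right\rangle\dd s.$$
Since $\e^{\pm t\partial_x^3}$ is an $H^r$-isometry for every $r$, I would aim to bound the integrand uniformly in $s$ by $L(\|f\|_{H^2},\|g\|_{H^3})\|f-g\|_{H^2}^2$, whereupon integration in $s$ immediately yields the claimed factor $\tau$.

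Next, the key structural move is to write $F+G=2G+H$, so that $(F+G)H=H^2+2GH$ and the only factor carrying the extra $H^3$-regularity is isolated. Leibniz gives $\partial_x^3(H^2)=2H\partial_x^3H+6\,\partial_x H\,\partial_x^2 H$ and $\partial_x^3(GH)=G\partial_x^3H+3\,\partial_x G\,\partial_x^2 H+3\,\partial_x^2 G\,\partial_x H+\partial_x^3G\cdot H$. The two potentially forbidden contributions that would naively require $H\in H^3$ collapse after a single integration by parts via the identity $\partial_x^3 H\,\partial_x^2 H=\tfrac{1}{2}\partial_x\bigl((\partial_x^2 H)^2\bigr)$:
$$\int 2H\partial_x^3 H\,\partial_x^2 H\,\dd x = -\int \partial_x H\,(\partial_x^2 H)^2\,\dd x,\qquad \int 2G\partial_x^3 H\,\partial_x^2 H\,\dd x = -\int \partial_x G\,(\partial_x^2 H)^2\,\dd x,$$
both of which are then controlled using the 1D Sobolev embedding $H^2(\mathbb{T})\hookrightarrow W^{1,\infty}(\mathbb{T})$.

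Finally, the remaining Leibniz terms involving only first and second derivatives are handled directly by H\"older's inequality and the same embedding, while the only inner product that genuinely consumes the additional regularity of $g$, namely $\bigl\langle \partial_x^3 G\cdot H,\partial_x^2 H\bigr\rangle$, is bounded by $\|\partial_x^3 G\|_{L^2}\|H\|_{L^\infty}\|\partial_x^2 H\|_{L^2}\lesssim \|G\|_{H^3}\|H\|_{H^2}^2$. Collecting the pieces and using the isometry to identify $\|G(s)\|_{H^3}=\|g\|_{H^3}$ and $\|H(s)\|_{H^2}=\|f-g\|_{H^2}\leq \|f\|_{H^2}+\|g\|_{H^3}$ gives a uniform-in-$s$ bound on the integrand of the form $C\bigl(\|f\|_{H^2}+\|g\|_{H^3}\bigr)\|f-g\|_{H^2}^2$, and integration in $s\in[0,\tau]$ completes the argument. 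The only genuinely non-routine ingredient is the structural observation that after the splitting $F+G=2G+H$ the unique unavoidable loss of derivative ($\partial_x^3 G$) falls exclusively on $g\in H^3$, while the two $\partial_x^3 H$ contributions cancel to lower order via a single integration by parts; this is precisely what makes the estimate survive the mismatched regularity $f\in H^2$, $g\in H^3$ forced by the global-error analysis in Section~\ref{sec:error_analysis_in_H2}.
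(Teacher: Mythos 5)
Your argument is correct, but it is worth noting that the paper does not actually prove this lemma: it simply cites Lemma~2.3 of \cite{hofmanova2017exponential}. Your proof is therefore a genuinely self-contained alternative, and its mechanism differs from the one used there (and from the analogous computation in Appendix~A of this paper, which is the closest in-paper model): the cited argument works on the Fourier side with the closed-form difference of phase factors and extracts the factor $\tau$ from the elementary bound $|\e^{i\theta}-1|\le|\theta|$ applied to $\theta=3\tau mab$, redistributing the powers of $m=a+b$ among $a$ and $b$ by hand. You instead undo the exact integration in the derivation of the scheme, recover $\mathcal{F}$ as a Duhamel-type integral $\frac12\int_0^\tau\e^{(t_n+s)\partial_x^3}\partial_x(\e^{-(t_n+s)\partial_x^3}\tilde v)^2\,\dd s$ (which is exactly the identity used to define the method, so it is legitimate), obtain $\tau$ from the length of the integration interval, and run a Burgers-type energy estimate in physical space. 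The splitting $F+G=2G+H$ together with the single integration by parts $\int(2G+2H)\partial_x^3H\,\partial_x^2H\,\dd x=-\int\partial_x(G+H)(\partial_x^2H)^2\,\dd x$ isolates cleanly the unique term $\langle\partial_x^3G\cdot H,\partial_x^2H\rangle$ that consumes the extra regularity of $g$; this makes the role of the asymmetric hypothesis $f\in H^2$, $g\in H^3$ more transparent than the Fourier-side bookkeeping. The one point you should make explicit is that for $H=\e^{-(t_n+s)\partial_x^3}(f-g)$ with $f-g\in H^2$ only, the intermediate expressions $H\partial_x^3H$ and $G\partial_x^3H$ are not defined pointwise as $L^2$ pairings, so the integration by parts must be justified by first proving the estimate for smooth $f,g$ and concluding by density, using that both sides of the final inequality depend continuously on $(f,g)\in H^2\times H^3$; this is routine but should be said.
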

\begin{proof}
	See Lemma~2.3 from \cite{hofmanova2017exponential}.
\end{proof}}
We can now proceed to prove the stability estimate Lemma~\ref{lem:stability_estimate_implicit_H2}, {the proof of this stability result is comparable to the stability analysis of the trapezoidal rule, where in our case the boundedness of the nonlinear operators is provided by the estimate in Lemma~\ref{lem:auxillary_stability_estimate_hofmanova}.}
\begin{proof}[Proof of Lemma~\ref{lem:stability_estimate_implicit_H2}] By the definition of the numerical method \eqref{eqn:resonance_based_midpoint_rule_v_KdV} we have for any function $f$:
	\begin{align}\label{eqn:recall_definition_of_numerical_method_in_terms_of_mathcalF}
\Phi_{t_n,\tau}(f)=f+\mathcal{G}\left(t_n,\tau, \frac{f+\Phi_{t_n,\tau}(f)}{2}\right).		
	\end{align}
{Thus, we have
\begin{align*}
\|&\Phi_{t_n,\tau}(f)-\Phi_{t_n,\tau}(g)\|_{H^2}^2\\&= \|f-g\|_{H^2}^2+\underbrace{2\left\langle\partial_x^2\left(\mathcal{G}\left(t_n,\tau, \frac{f+\Phi_{t_n,\tau}(f)}{2}\right)-\mathcal{G}\left(t_n,\tau, \frac{g+\Phi_{t_n,\tau}(g)}{2}\right)\right),\partial_x^2(f-g)\right\rangle}_{=:A} \\
&\quad+\underbrace{\left\|\mathcal{G}\left(t_n,\tau, \frac{f+\Phi_{t_n,\tau}(f)}{2}\right)-\mathcal{G}\left(t_n,\tau, \frac{g+\Phi_{t_n,\tau}(g)}{2}\right)\right\|_{H^2}^2}_{=:B}.\\
\end{align*}
We can then estimate the term $A$ as follows:
\begin{align*}
    |A|&\leq2\left|\left\langle\partial_x^2\left(\mathcal{G}\left(t_n,\tau, \frac{f+\Phi_{t_n,\tau}(f)}{2}\right)-\mathcal{G}\left(t_n,\tau, \frac{g+\Phi_{t_n,\tau}(g)}{2}\right)\right),\partial_x^2\left(\frac{f+\Phi_{t_n,\tau}(f)}{2}-\frac{g+\Phi_{t_n,\tau}(g)}{2}\right)\right\rangle\right|\\
    &\quad +\left\|\partial_x^2\left(\mathcal{G}\left(t_n,\tau, \frac{f+\Phi_{t_n,\tau}(f)}{2}\right)-\mathcal{G}\left(t_n,\tau, \frac{g+\Phi_{t_n,\tau}(g)}{2}\right)\right)\right\|_{L^2}^2\\
    &\leq \tau L\left(\frac{1}{2}\left\|f+\Phi_{t_n,\tau}(f)\right\|_{H^2},\frac{1}{2}\left\|g+\Phi_{t_n,\tau}(g)\right\|_{H^3}\right) \frac{1}{2}\|f-g+\Phi_{t_n,\tau}(f)-\Phi_{t_n,\tau}(g)\|_{H^2}^2+B,
\end{align*}
where we used \eqref{eqn:recall_definition_of_numerical_method_in_terms_of_mathcalF} and Lemma~\ref{lem:auxillary_stability_estimate_hofmanova}. For the term $B$ we have by Lemma~\ref{lem:stability_lemma_1}
\begin{align*}
B\leq \tau M_2\left(\frac{1}{2}\left\|f+\Phi_{t_n,\tau}(f)\right\|_{H^2},\frac{1}{2}\left\|g+\Phi_{t_n,\tau}(g)\right\|_{H^3}\right)^2\frac{1}{2}\|f-g+\Phi_{t_n,\tau}(f)-\Phi_{t_n,\tau}(g)\|_{H^2}^2.
\end{align*}}
Now we have from {\eqref{eqn:distance_next_time_step_from_previous_Hl}} that there is a $\tilde{\tau}_R>0$ such that, for all $\tau\in[0,\tilde{\tau}_R)$,
\begin{align*}
\|\Phi_{t_n,\tau}(f)\|_{H^2}&\leq \|f\|_{H^2}+\|f-\Phi_{t_n,\tau}(f)\|_{H^2}\leq R+R=2R,\\
\|\Phi_{t_n,\tau}(g)\|_{H^3}&\leq \|g\|_{H^3}+\|g-\Phi_{t_n,\tau}(g)\|_{H^3}\leq R+R=2R.
\end{align*}
Therefore, by the continuity of the functions $L,M_2$, there is a constant $\tilde{C}_R>0$ such that for all $\tau\in[0,\tilde{\tau}_R)$
\begin{align*}
\|\Phi_{t_n,\tau}(f)-\Phi_{t_n,\tau}(g)\|_{H^2}^2\leq \|f-g\|_{H^2}^2+\tau \frac{\tilde{C}_R}{2}\left(\|f-g\|_{H^2}^2+\|\Phi_{t_n,\tau}(f)-\Phi_{t_n,\tau}(g)\|_{H^2}^2\right).
\end{align*}
Equivalently for all $\tau\in[0,\tilde{\tau}_R)$
\begin{align*}
\|\Phi_{t_n,\tau}(f)-\Phi_{t_n,\tau}(g)\|_{H^2}^2\leq \frac{1+\frac{\tau}{2}\tilde{C}_R}{1-\frac{\tau}{2}\tilde{C}_R}\|f-g\|_{H^2}^2.
\end{align*}
We now recall that
\begin{align*}
	\frac{1+x/2}{1-x/2}\leq \exp{\left(\frac{3x}{2}\right)},\quad \forall x<1
\end{align*}
Thus the result follows immediately by taking $\tau_R=\min\{\tilde{\tau}_R,1/\tilde{C}_R\}$.
\end{proof}
\paragraph{Local error in $H^2$.}
We can now proceed to estimate the local error of a single time step. We note at this point that a local error of $\mathcal{O}(\tau^{3/2})$ is sufficient to guarantee convergence and hence boundedness of the numerical method in $H^2$. We will describe estimates that provide faster convergence rates in $H^1$ in Section~\ref{sec:error_analysis_in_H1}.
\begin{lemma}\label{lem:local_error_estimate_H2}
	Let us denote by {$t\mapsto\phi_{t_n,t}(z)$ the solution to }
 {\begin{align}\label{eqn:interation_picture_kdv_tn}
     \begin{cases}
		\partial_t\left(\phi_{t_n,t}(z)\right)=\frac{1}{2}\e^{(t_n+t)\partial_x^3}\partial_x\left(\e^{-(t_n+t)\partial_x^3}\phi_{t_n,t}(z)\right)^2, t\in [0,\tau]\\
		\phi_{t_n,0}(z)=z.
	\end{cases}
 \end{align}}
 Fix $R>0$, then there is a $\tau_R>0$ such that for all $\tau\in [0,\tau_R)$ and any {$z\in B_R(H^3)$} such that $\sup_{t\in[0,\tau]}\|\phi_{t_n,t}({z})\|_{H^3}<R$ we have
	\begin{align*}
	\|\phi_{t_n,\tau}({z})-\Phi_{t_n,\tau}({z})\|_{H^2}\leq c_R\tau^{\frac{3}{2}} 
	\end{align*}
	for some constant $c_R>0$ depending only on $R>0$.
\end{lemma}
\begin{proof} The proof of this statement is closely inspired by the proof of Lemma 2.5 in \cite{hofmanova2017exponential}. {However, we need to account for the implicit nature of our scheme by using Theorem~\ref{thm:fixed_point_iterations_v}.} According to Duhamel's formula \eqref{eqn:duhamel_in_fourier_for_v} and the construction of our resonance-based scheme \eqref{eqn:resonance_based_midpoint_rule_v_KdV} we have
	\begin{align}\begin{split}\label{eqn:first_estimate_local_error_H2}
		&\|\phi_{t_n,\tau}({z})-\Phi_{t_n,\tau}({z})\|_{H^2}\\
		&\quad\leq \underbrace{\left\|\frac{1}{2}\int_{0}^\tau \e^{(t_n+s)\partial_x^3}\partial_x\left(\e^{-(t_n+s)\partial_x^3}{\phi_{t_n,s}(z)}\right)^2-\partial_x\left(\e^{-(t_n+s)\partial_x^3}{z}\right)^2\dd s\right\|_{H^2}}_{=:A_1}\\
		&\quad \quad+\underbrace{\left\|\frac{1}{2}\int_{0}^\tau \e^{(t_n+s)\partial_x^3}\partial_x\left(\e^{-(t_n+s)\partial_x^3}{z}\right)^2-\partial_x\left(\e^{-(t_n+s)\partial_x^3}\frac{{z}+\Phi_{t_n,\tau}({z})}{2}\right)^2\dd s\right\|_{H^2}}_{=:A_2}.
		\end{split}
	\end{align}
We will now estimate each term $A_1,A_2$ individually. For the first term, $A_1$ the following bound was shown in \cite[Lemma~2.5]{hofmanova2017exponential} (under the assumption that $\sup_{t\in[0,\tau]}\|\phi_{t_n,\tau}({z})\|_{H^3}<R$):
\begin{align}\label{eqn:second_estimate_local_error_H2}
A_1\leq c_{1,R}\tau^{3/2},
\end{align}
where $c_{1,R}>0$ is a constant depending on $R$. For the second term $A_2$ we note
\begin{align*}
A_2&=\left\|\frac{1}{2}\int_{0}^\tau \e^{(t_n+s)\partial_x^3}\partial_x\left(\e^{-(t_n+s)\partial_x^3}{z}\right)^2-\partial_x\left(\e^{-(t_n+s)\partial_x^3}\frac{{z}+\Phi_{t_n,\tau}({z})}{2}\right)^2\dd s\right\|_{H^2}\\
&\leq \underbrace{\left\|\frac{1}{2}\int_{0}^\tau \e^{(t_n+s)\partial_x^3}\partial_x\left[\left(\e^{-(t_n+s)\partial_x^3}{z}\right)\left(\e^{-(t_n+s)\partial_x^3}({z}-\Phi_{t_n,\tau}({z}))\right)\right]\dd s\right\|_{H^2}}_{A_{2,1}}\\
&\quad+\underbrace{\left\|\frac{1}{2}\int_{0}^\tau \e^{(t_n+s)\partial_x^3}\partial_x\left(\e^{-(t_n+s)\partial_x^3}\frac{{z}-\Phi_{t_n,\tau}({z})}{2}\right)^2\dd s\right\|_{H^2}}_{A_{2,2}}.
\end{align*}
We can again estimate those contributions individually. {Firstly, we have by Theorem~\ref{thm:fixed_point_iterations_v} (specifically \eqref{eqn:distance_next_time_step_from_previous_Hl}) and by the usual bilinear estimate Lemma~\ref{lem:bilinear_estimates} that under the assumptions on $\tau,{z}$,
\begin{align}\label{eqn:third_estimate_local_error_H2}
	A_{2,1}\leq \tilde{c}_{2}\int_{0}^\tau \|{z}\|_{H^3}\|{z}-\Phi_{t_n,\tau}({z})\|_{H^3}\dd s\leq c_{2,R}\tau^{\frac{3}{2}},
\end{align}
where $c_{2,R}>0$ depends on $R$.} Similarly, we have
\begin{align}\label{eqn:fourth_estimate_local_error_H2}
	A_{2,2}\leq \tilde{c}_{3}\int_{0}^{\tau}\|{z}-\Phi_{t_n,\tau}({z})\|_{H^3}^2\dd s\leq {\tau^2} c_{3,R},
\end{align}
where $c_{3,R}>0$ depends on $R$. Combining \eqref{eqn:first_estimate_local_error_H2}-\eqref{eqn:fourth_estimate_local_error_H2} yields the desired estimate.
\end{proof}
\paragraph{Global error in $H^2$.}
We can now combine the estimates from Lemma~\ref{lem:stability_estimate_implicit_H2} and Lemma~\ref{lem:local_error_estimate_H2} to prove the following global error estimate.
\begin{theorem}\label{thm:global_error_in_H2} Let us again denote by $v(t)$ the exact solution to \eqref{eqn:interation_picture_kdv} and be $v^n, n\geq 0,$ the iterates in the numerical method \eqref{eqn:resonance_based_midpoint_rule_v_KdV}, and $t_n=n\tau$. Given $T>0,R>0$ there is a $\tau_R>0$ such that for all $\tau\in [0,\tau_R)$ and as long as $\sup_{t\in [0,T]}\|v(t)\|_{H^3}<R/2$ we have
	\begin{align*}
	\|v(t_n)-v^{n}\|_{H^2}\leq \tau^{\frac{1}{2}}{c_{R,T}}, \ \forall 0\leq n\leq \left\lfloor \frac{T}{\tau}\right\rfloor,
	\end{align*}
	for some constant {$c_{R,T}>0$ depending on $R,T$, but which may be chosen independently of $\tau$.}
\end{theorem}
\begin{proof}
	By the triangle inequality we have
	\begin{align*}
	\|v(t_n)-v^{n}\|_{H^2}\leq \left\|\Phi_{t_{n-1},\tau}(v(t_{n-1}))-\Phi_{t_{n-1},\tau}(v^{n-1})\right\|_{H^2}+\left\|\phi_{t_{n-1},\tau}(v(t_{n-1})-\Phi_{t_{n-1},\tau}(v(t_{n-1}))\right\|_{H^2}.
	\end{align*}
	Iterating the estimate we have, so long as $v^{k}\in B_R(H^2)$ for $0\leq k\leq n$ and $\tau\in [0,\tilde{\tau}_R)$ where $\tilde{\tau}_R$ is as given in Lemmas \ref{lem:stability_estimate_implicit_H2} \& \ref{lem:local_error_estimate_H2}, that
	\begin{align*}
		\|v(t_n)-v^{n}\|_{H^2}&\leq \e^{\tau C_R}\|v(t_{n-1})-v^{n-1}\|_{H^2}+c_R\tau^{\frac{3}{2}}\\
		&\leq \e^{2\tau C_R}\|v(t_{n-2})-v^{n-2}\|_{H^2}+\e^{\tau C_R}c_R\tau^{\frac{3}{2}}+c_R\tau^{\frac{3}{2}}\\
		&\leq c_R\tau^{\frac{3}{2}}\sum_{k=0}^{n-1}\e^{k\tau C_R}\leq c_R\tau^{\frac{1}{2}} t_n \e^{t_n C_R}.
	\end{align*}
	Thus in particular if we choose $\tau<\tau_R$ where $\tau_R=\min\{\tilde{\tau}_R,R^2\exp(-2TC_R)/(4T^2c_R^2)\}$, we ensure that $v^{n}\in B_R(H^2)$ and the result follows by induction.
\end{proof}

\subsection{Error analysis in $H^1$}\label{sec:error_analysis_in_H1}
Having proved the boundedness of our numerical approximation in $H^2$ (see Theorem~\ref{thm:global_error_in_H2}) we can proceed to study its convergence properties in $H^1$.
\paragraph{Stability.}
As in Section~\ref{sec:error_analysis_in_H2} we begin by proving the crucial stability estimate, based on the following estimate from \cite{hofmanova2017exponential}:
{\begin{lemma}\label{lem:auxillary_stability_estimate_H1_hofmanova}
   	Let $\mathcal{F}$ be defined as in \eqref{eqn:def_of_mathcalF}. Then, there is a continuous function $L:\mathbb{R}_{\geq 0}\times \mathbb{R}_{\geq 0}\rightarrow \mathbb{R}_{\geq 0}$ such that for any $f\in H^{2},g\in H^2$ and any $t_n\geq 0$ we have
	\begin{align*}
	\left|\left\langle\partial_x\left(\mathcal{G}(t_n,\tau,f)-\mathcal{G}(t_n,\tau,g)\right),\partial_x(f-g)\right\rangle\right|\leq \tau L(\|f\|_{H^2},\|g\|_{H^2})\|f-g\|_{H^1}.
	\end{align*} 
\end{lemma}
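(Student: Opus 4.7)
The plan is to recognise that the map $\mathcal{F}$ defined in Eq.~\eqref{eqn:def_of_mathcalF} is nothing other than the ``frozen'' KdV nonlinearity integrated over one step, and then to apply a classical $H^1$ energy-type argument after sliding the linear propagator across the $L^2$ pairing. A short Fourier calculation (or, equivalently, differentiation of $\tfrac{1}{6}\e^{t\partial_x^3}(\e^{-t\partial_x^3}\partial_x^{-1}\tilde v)^2$ in $t$) shows that
\begin{align*}
\mathcal{F}(t_n,\tau,\tilde v)=\tfrac{1}{2}\int_0^\tau \e^{(t_n+s)\partial_x^3}\partial_x\bigl((\e^{-(t_n+s)\partial_x^3}\tilde v)^2\bigr)\,\dd s.
\end{align*}
Writing $h=f-g$, $p=f+g$, $h_s=\e^{-(t_n+s)\partial_x^3}h$ and $p_s=\e^{-(t_n+s)\partial_x^3}p$, the identity $f^2-g^2=(f-g)(f+g)$ together with the linearity of the propagator immediately gives
\begin{align*}
\mathcal{F}(t_n,\tau,f)-\mathcal{F}(t_n,\tau,g)=\tfrac{1}{2}\int_0^\tau \e^{(t_n+s)\partial_x^3}\partial_x(h_s p_s)\,\dd s.
\end{align*}

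Next I would use that $\e^{t\partial_x^3}$ is $L^2$-unitary and commutes with $\partial_x$, together with the fact that $\e^{-(t_n+s)\partial_x^3}$ is an isometry on every $H^r$, to transfer the propagator onto the $\partial_x h$ test factor. Since $\e^{-(t_n+s)\partial_x^3}\partial_x h=\partial_x h_s$, the pairing reduces to
\begin{align*}
\bigl\langle \partial_x\bigl(\mathcal{F}(t_n,\tau,f)-\mathcal{F}(t_n,\tau,g)\bigr),\partial_x h\bigr\rangle =\tfrac{1}{2}\int_0^\tau \bigl\langle \partial_x^2(h_s p_s),\partial_x h_s\bigr\rangle\,\dd s,
\end{align*}
i.e. to the classical $H^1$ energy estimate for the Burgers-type term $\partial_x(h p)$ tested against $\partial_x h$, accompanied by an extra integration in $s\in[0,\tau]$ that will furnish the prefactor $\tau$.

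The concluding step is the careful handling of the three contributions in the expansion $\partial_x^2(h_s p_s)=(\partial_x^2 h_s)p_s+2(\partial_x h_s)(\partial_x p_s)+h_s(\partial_x^2 p_s)$. For the first I would integrate by parts, using $(\partial_x^2 h_s)(\partial_x h_s)=\tfrac{1}{2}\partial_x((\partial_x h_s)^2)$, to obtain $-\tfrac{1}{2}\int(\partial_x h_s)^2\partial_x p_s$; the middle contribution is already of this shape. Both are then bounded by $\|\partial_x p_s\|_{L^\infty}\|\partial_x h_s\|_{L^2}^2$, which by the one-dimensional Sobolev embedding $H^2\hookrightarrow W^{1,\infty}$ is controlled by $C\|f+g\|_{H^2}\|f-g\|_{H^1}^2$. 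The third term, $\int h_s(\partial_x^2 p_s)(\partial_x h_s)$, is the main obstacle: a further integration by parts would produce a $\partial_x^3 p_s$ and therefore demand $H^3$ regularity for $f+g$, which is unavailable. The remedy is to \emph{not} integrate by parts and to estimate directly via $\|h_s\|_{L^\infty}\|\partial_x h_s\|_{L^2}\|\partial_x^2 p_s\|_{L^2}\leq C\|f-g\|_{H^1}^2\|f+g\|_{H^2}$, invoking $H^1\hookrightarrow L^\infty$. Combining the three estimates, using the isometry property of $\e^{-(t_n+s)\partial_x^3}$ to replace norms of $h_s,p_s$ by those of $h,p$, and integrating over $s\in[0,\tau]$ gives the claimed bound with the continuous function $L(a,b):=C(a+b)$.
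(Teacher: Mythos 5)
Your proof is correct, and it is genuinely more than what the paper offers: the paper's ``proof'' of this lemma is a one-line citation to Eq.~(36) of Hofmanov\'a \& Schratz (2017), whereas you supply a self-contained argument. Your two key moves both check out. First, the identity $\mathcal{F}(t_n,\tau,\tilde v)=\tfrac12\int_0^\tau \e^{(t_n+s)\partial_x^3}\partial_x\bigl((\e^{-(t_n+s)\partial_x^3}\tilde v)^2\bigr)\dd s$ is verified by a direct Fourier computation (and is implicitly used by the paper itself in Eq.~\eqref{eqn:stability_estimate_tau_one}); combined with $f^2-g^2=(f-g)(f+g)$ and the unitarity of $\e^{t\partial_x^3}$ on $L^2$, it reduces the pairing to $\tfrac12\int_0^\tau\langle\partial_x^2(h_sp_s),\partial_x h_s\rangle\dd s$. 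Second, the treatment of the three terms in the Leibniz expansion is exactly right: the dangerous term $(\partial_x^2h_s)p_s\,\partial_xh_s$ is tamed by the integration by parts $\tfrac12\partial_x((\partial_xh_s)^2)$, and the remaining terms need only $\|\partial_xp_s\|_{L^\infty}\lesssim\|p\|_{H^2}$ and $\|h_s\|_{L^\infty}\lesssim\|h\|_{H^1}$ via one-dimensional Sobolev embedding together with the $H^r$-isometry of the propagator, so that $H^2$ regularity of $f+g$ suffices --- which is precisely why the lemma can ask only for $f,g\in H^2$ (in contrast to the $H^2$-analogue, Lemma~\ref{lem:auxillary_stability_estimate_hofmanova}, which needs $g\in H^3$). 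One small reconciliation with the statement: your argument yields $\tau\,C(\|f\|_{H^2}+\|g\|_{H^2})\|f-g\|_{H^1}^2$, i.e.\ the difference appears squared; this is in fact the form needed in the stability proof of Lemma~\ref{lem:stability_in_H1} (mirroring Lemma~\ref{lem:auxillary_stability_estimate_hofmanova}), and it implies the bound as printed since one factor $\|f-g\|_{H^1}\le\|f\|_{H^2}+\|g\|_{H^2}$ can be absorbed into the continuous function $L$; the missing square in the statement appears to be a typo in the paper.
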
}
\begin{proof}
	See Eq.~(36) from \cite{hofmanova2017exponential}.
\end{proof}
The stability estimate for $H^1$ can now be deduced analogously to the proof of Lemma~\ref{lem:stability_estimate_implicit_H2}:
\begin{lemma}\label{lem:stability_in_H1}
	Fix $R>0$. Then there is a $\tau_R>0$ such that for all $\tau\in [0,\tau_R)$ and any $f, g\in B_R(H^2)$ we have
	\begin{align*}
	\|\Phi_{t_n,\tau}(f)-\Phi_{t_n,\tau}(g)\|_{H^1}\leq \exp(\tau \tilde{C}_R)\|f-g\|_{H^1},
	\end{align*}
	where $\tilde{C}_R>0$ depends on $R$.
\end{lemma}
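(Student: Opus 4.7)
The plan is to follow very closely the structure of the proof of Lemma~\ref{lem:stability_estimate_implicit_H2}, but replacing each application of the $H^2$ ingredients with their $H^1$ counterparts. First I would introduce the shorthand $F':=(f+\Phi_{t_n,\tau}(f))/2$ and $G':=(g+\Phi_{t_n,\tau}(g))/2$, and write, using Eq.~\eqref{eqn:recall_definition_of_numerical_method_in_terms_of_mathcalF},
\begin{align*}
\Phi_{t_n,\tau}(f)-\Phi_{t_n,\tau}(g)=(f-g)+\bigl(\mathcal{F}(t_n,\tau,F')-\mathcal{F}(t_n,\tau,G')\bigr).
\end{align*}
Expanding $\|\Phi_{t_n,\tau}(f)-\Phi_{t_n,\tau}(g)\|_{H^1}^2$ then produces the three contributions: $\|f-g\|_{H^1}^2$, a cross term of inner-product type, and a quadratic remainder $\|\mathcal{F}(t_n,\tau,F')-\mathcal{F}(t_n,\tau,G')\|_{H^1}^2$.

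For the cross term, the identity $f-g=(F'-G')-\tfrac{1}{2}\bigl(\mathcal{F}(t_n,\tau,F')-\mathcal{F}(t_n,\tau,G')\bigr)$ (obtained exactly as in the proof of Lemma~\ref{lem:stability_estimate_implicit_H2}) lets me split it into a piece of the form $\langle\partial_x(\mathcal{F}(t_n,\tau,F')-\mathcal{F}(t_n,\tau,G')),\partial_x(F'-G')\rangle$, to which Lemma~\ref{lem:auxillary_stability_estimate_H1_hofmanova} applies directly, and a piece that is absorbed into the quadratic remainder. The quadratic remainder itself is handled by Lemma~\ref{lem:stability_lemma_1} with $l=1$, giving a bound $\tau M_1(\|F'\|_{H^1},\|G'\|_{H^1})^2\|F'-G'\|_{H^1}^2$.

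At this point I will use the elementary estimate $\|F'-G'\|_{H^1}^2\leq \tfrac{1}{2}\bigl(\|f-g\|_{H^1}^2+\|\Phi_{t_n,\tau}(f)-\Phi_{t_n,\tau}(g)\|_{H^1}^2\bigr)$ and rearrange to obtain
\begin{align*}
\|\Phi_{t_n,\tau}(f)-\Phi_{t_n,\tau}(g)\|_{H^1}^2\leq \frac{1+\tfrac{\tau}{2}\tilde{C}_R}{1-\tfrac{\tau}{2}\tilde{C}_R}\|f-g\|_{H^1}^2,
\end{align*}
from which the stated exponential bound follows via $(1+x/2)/(1-x/2)\leq\mathrm{e}^x$ for $x<1$. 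The only point requiring care is that Lemma~\ref{lem:auxillary_stability_estimate_H1_hofmanova} demands $F',G'\in H^2$ with a controlled norm, but this is exactly where the hypothesis $f,g\in B_R(H^2)$ (rather than merely $H^1$) is used: by Thm.~\ref{thm:fixed_point_iterations_v} applied at level $l=2$, for $\tau$ small enough the images $\Phi_{t_n,\tau}(f),\Phi_{t_n,\tau}(g)$ remain in $B_{2R}(H^2)$, so $\|F'\|_{H^2},\|G'\|_{H^2}\leq 2R$ and the constants $L,M_1$ can be uniformly bounded in terms of $R$ alone.

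The main obstacle, and the reason the argument is more delicate than a naive adaptation of Lemma~\ref{lem:stability_estimate_implicit_H2}, is ensuring that the implicit character of the scheme does not inflate the stability constant: one has to verify that the $H^2$-a priori bound on $\Phi_{t_n,\tau}(f)$ and $\Phi_{t_n,\tau}(g)$ is available \emph{before} the $H^1$ stability estimate is used, which is why the present lemma must be positioned after, and rely on, the $H^2$ analysis of Section~\ref{sec:error_analysis_in_H2}. Once that bound is in hand, the absorption of the $\|\Phi_{t_n,\tau}(f)-\Phi_{t_n,\tau}(g)\|_{H^1}^2$ term on the right-hand side is routine for $\tau<\tau_R:=\min\{\tilde\tau_R,1/\tilde C_R\}$.
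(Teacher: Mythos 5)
Your proposal is correct and follows essentially the same route as the paper, which simply states that Lemma~\ref{lem:stability_in_H1} is proved by repeating the argument of Lemma~\ref{lem:stability_estimate_implicit_H2} with Lemma~\ref{lem:auxillary_stability_estimate_hofmanova} replaced by the $H^1$-estimate Lemma~\ref{lem:auxillary_stability_estimate_H1_hofmanova}; your write-up supplies exactly those omitted details (the splitting of the cross term via $f-g=(F'-G')-\tfrac12(\mathcal{F}(t_n,\tau,F')-\mathcal{F}(t_n,\tau,G'))$, the use of Lemma~\ref{lem:stability_lemma_1} with $l=1$ for the quadratic remainder, and the absorption argument). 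One minor remark: the $H^2$ a priori control of $\Phi_{t_n,\tau}(f)$ and $\Phi_{t_n,\tau}(g)$ needed for the constants $L$ and $M_1$ comes from Thm.~\ref{thm:fixed_point_iterations_v} alone (as you in fact invoke), so the lemma does not logically require the global $H^2$ error analysis of Section~\ref{sec:error_analysis_in_H2} — that result is only needed later to guarantee the numerical iterates stay in $B_R(H^2)$.
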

\begin{proof}
	This statement can be proved analogously to Lemma~\ref{lem:stability_estimate_implicit_H2} at each point replacing the use of Lemma~\ref{lem:auxillary_stability_estimate_hofmanova} by the $H^1$-estimate Lemma~\ref{lem:auxillary_stability_estimate_H1_hofmanova}. In the interest of brevity the arguments are not repeated here.	
\end{proof}
\paragraph{Local error in $H^1$.} We may now proceed to prove the crucial local error estimates on the numerical scheme in $H^1$. In the following we will show two central results: in Lemma~\ref{lem:local_error_estimate_H1_first_order} we show that the method incurs a local error of size $\mathcal{O}(\tau^2)$ provided the solution $v(t)$ remains uniformly bounded in $H^3$ over the time-interval of interest $[t_n,t_n+\tau]$. {However, like the classical midpoint rule, our present method actually exhibits a perhaps surprising improved convergence property: in Lemma~\ref{lem:local_error_estimate_H1_second_order} we demonstrate the method incurs a local error of size $\mathcal{O}(\tau^3)$ provided the solution remains uniformly bounded in $H^5$ over the corresponding time-interval.}
\begin{lemma}\label{lem:local_error_estimate_H1_first_order} As {in Lemma~\ref{lem:local_error_estimate_H2},} let us denote by $\phi_{t_n,\tau}({z})$ {the solution to \eqref{eqn:interation_picture_kdv_tn}.} Fix $R>0$, then there is a $\tau_R>0$ such that for all $\tau\in [0,\tau_R)$ and any ${z}\in B_R(H^3)$ such that $\sup_{t\in[0,\tau]}\|\phi_{t_n,t}({z})\|_{H^3}<R$ we have
	\begin{align*}
		\|\phi_{t_n,\tau}({z})-\Phi_{t_n,\tau}({z})\|_{H^1}\leq c_R\tau^{2} 
	\end{align*}
	for some constant $c_R>0$ depending on $R>0$.
\end{lemma}

\begin{proof}By construction of our numerical scheme \eqref{eqn:resonance_based_midpoint_rule_v_KdV} we have
	\begin{align*}
		\phi^\tau({z})-&\Phi_{t_n}^\tau({z})\\&=\frac{1}{2}\int_{0}^{\tau}\e^{(t_n+s)\partial_x^3}\partial_x\left[\left(\e^{-(t_n+s)\partial_x^3}{\phi_{t_n,s}(z)}\right)^2-\left(\e^{-(t_n+s)\partial_x^3}\frac{1}{2}\left({z}+\Phi_{t_n}^\tau({z})\right)\right)^2\right]\dd s.
	\end{align*}
Thus, using the usual bilinear estimate Lemma~\ref{lem:bilinear_estimates}, we find for some constant $c>0$
\begin{align}\nonumber
	&\left\|\phi^\tau({z})-\Phi_{t_n,\tau}({z})\right\|_{H^1}\\\nonumber
	&
	\quad\leq c\frac{1}{2}\int_0^\tau \left\|{\phi_{t_n,s}(z)}-\frac{1}{2}\left({z}+\Phi_{t_n,\tau}({z})\right)\right\|_{H^2}\left(\|{\phi_{t_n,s}(z)}\|_{H^2}+\frac{1}{2}\left\|{z}+\Phi_{t_n,\tau}({z})\right\|_{H^2}\right)\dd s\\\begin{split}\label{eqn:initial_estimate_H1_local_error_first_order}
	&\quad\leq c\frac{\tau}{2}\sup_{s\in[0,\tau]} \left\|{\phi_{t_n,s}(z)}-\frac{1}{2}\left({z}+\Phi_{t_n,\tau}({z})\right)\right\|_{H^2}\\
	&\hspace{6.5cm}\left(\sup_{s\in[0,\tau]}\|{\phi_{t_n,s}(z)}\|_{H^2}+\frac{1}{2}\left\|{z}+\Phi_{t_n,\tau}({z})\right\|_{H^2}\right)\
 \end{split}
\end{align}
Note that by Theorem~\ref{thm:fixed_point_iterations_v} {(specifically \eqref{eqn:distance_next_time_step_from_previous_Hl+1})} we have for some $\tilde{\tau}_R>0$ and all $\tau\in [0,\tilde{\tau}_R)$
\begin{align}
\frac{1}{2}\left\|{z}+\Phi_{t_n,\tau}({z})\right\|_{H^2}\leq R+\tau\tilde{C}_R\leq \tilde{\tilde{C}}_R,
\end{align}
for some $\tilde{C}_R,\tilde{\tilde{C}}_R>0$ which depend on $R$. It remains to bound the term 
\begin{align*}
	\left\|{\phi_{t_n,s}(z)}-\frac{1}{2}\left({z}+\Phi_{t_n,\tau}({z})\right)\right\|_{H^2}.
\end{align*}
This can be done as follows:
\begin{align}\nonumber
	\left\|{\phi_{t_n,s}(z)}-\frac{1}{2}\left({z}+\Phi_{t_n,\tau}({z})\right)\right\|_{H^2}&\leq 
	\left\|{\phi_{t_n,s}(z)}-{\phi_{t_n,\tau}(z)}\right\|_{H^2}\\\nonumber
	&\quad+
	\left\|{\phi_{t_n,\tau}(z)}-\Phi_{t_n,\tau}({z})\right\|_{H^2}\\&\quad\quad+
	\frac{1}{2}\left\|{z}-\Phi_{t_n,\tau}({z})\right\|_{H^2}.
\end{align}
We then have by Lemma~\ref{lem:local_error_estimate_H2} for some $\tilde{\tilde{\tau}}_R>0$ and all $\tau\in [0,\tilde{\tilde{\tau}}_R)$
\begin{align}
\left\|{\phi_{t_n,\tau}(z)}-\Phi_{t_n,\tau}({z})\right\|_{H^2}\leq c_R\tau^{\frac{3}{2}},
\end{align}
and, by Theorem~\ref{thm:fixed_point_iterations_v},
\begin{align}
\frac{1}{2}\left\|{z}-\Phi_{t_n,\tau}({z})\right\|_{H^2}\leq \tau \tilde{C}_R.
\end{align}
Finally, we can estimate using Duhamel's formula \eqref{eqn:Duhamel_for_v} and the bilinear estimate Lemma~\ref{lem:bilinear_estimates}:
\begin{align}\nonumber
\left\|{\phi_{t_n,s}(z)}-{\phi_{t_n,\tau}(z)}\right\|_{H^2}&=\left\|\frac{1}{2}\int_{s}^\tau \e^{(t_n+\tilde{s})\partial_x^3}\partial_x\left(\e^{-(t_n+\tilde{s})\partial_x^3}{\phi_{t_n,\tilde{s}}(z)}\right)^2\dd\tilde{s}\right\|_{H^2}\\\label{eqn:final_estimate_H1_local_error_first_order}
&\leq c \frac{1}{2}\int_{s}^\tau \left\|{\phi_{t_n,\tilde{s}}(z)}\right\|_{H^3}^2 \dd\tilde{s}\leq \tau \tilde{c}_R
\end{align}
for some constant $\tilde{c}_R$. Thus if we choose $\tau_R=\min\{\tilde{\tau}_R,\tilde{\tilde{\tau}}_R,R/\tilde{C}_R\}$ and combine \eqref{eqn:initial_estimate_H1_local_error_first_order}-\eqref{eqn:final_estimate_H1_local_error_first_order}, the result follows.
\end{proof}

Although Lemma~\ref{lem:local_error_estimate_H1_first_order} is already sufficient to guarantee convergence of our method in $H^1$ we can show that faster rates of $H^1$-convergence can be obtained if we allow for slightly more regular initial data. {This particular approach to the local error analysis, which is based on iterating Duhamel's formula around the midpoint value $v(t_n+\tau/2)$ instead of the usual left endpoint $v(t_n)$, is a completely new and different from ideas that were previously used for the convergence analysis of low-regularity integrators.}

\begin{lemma}\label{lem:local_error_estimate_H1_second_order} Let {$\phi_{t_n,\tau}(z)$} be as above. Fix $R>0$, then there is a $\tau_R>0$ such that for all $\tau\in [0,\tau_R)$ and any ${z}\in B_R(H^5)$ such that $\sup_{t\in[0,\tau]}\|\phi_{t_n,\tau}({z})\|_{H^5}<R$ we have
\begin{align*}
	\|\phi_{t_n,\tau}({z})-\Phi_{t_n,\tau}({z})\|_{H^1}\leq c_R\tau^{3} 
\end{align*}
for some constant $c_R>0$ depending on $R>0$.
\end{lemma}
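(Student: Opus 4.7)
The idea is to follow the $H^1$ local error argument of Lemma~\ref{lem:local_error_estimate_H1_first_order} while exploiting the symmetric, midpoint-like structure of the discretisation in order to extract an additional factor of $\tau$. As a first step I would use Duhamel's formula Eq.~\eqref{eqn:Duhamel_for_v} together with the definition Eq.~\eqref{eqn:integral_expression_resonance_scheme_v} of the scheme and the factorisation $a^2-b^2=(a-b)(a+b)$ to write
\begin{align*}
\phi_{t_n,\tau}(v(t_n)) - \Phi_{t_n,\tau}(v(t_n)) = \frac{1}{2}\int_0^\tau \e^{(t_n+s)\partial_x^3}\partial_x\Bigl[\e^{-(t_n+s)\partial_x^3}(v(t_n+s)-\bar v)\cdot \e^{-(t_n+s)\partial_x^3}(v(t_n+s)+\bar v)\Bigr]\,\dd s,
\end{align*}
with $\bar v := \tfrac{1}{2}(v(t_n)+\Phi_{t_n,\tau}(v(t_n)))$. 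Writing $B_s(f,g)$ for the resulting symmetric bilinear form, Lemma~\ref{lem:bilinear_estimates} yields $\|B_s(f,g)\|_{H^1}\lesssim \|f\|_{H^2}\|g\|_{H^2}$; since $v(t_n+s)+\bar v$ is uniformly bounded in $H^2$ by the hypothesis on $v$ together with Thm.~\ref{thm:fixed_point_iterations_v}, the task reduces to controlling the contribution from $v(t_n+s)-\bar v$.

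The crucial intermediate estimate is
\begin{align*}
\|\bar v - v(t_n+\tau/2)\|_{H^2} = O(\tau^2),
\end{align*}
which I would establish by writing $\bar v - v(t_n+\tau/2) = \tfrac{1}{2}(v(t_n)+v(t_n+\tau) - 2v(t_n+\tau/2)) + \tfrac{1}{2}(\Phi_{t_n,\tau}(v(t_n)) - v(t_n+\tau))$. The first summand equals $\tfrac{\tau^2}{8}\partial_t^2 v(t_n) + O(\tau^3)$ by Taylor expansion, and a direct computation using the twisted equation~\eqref{eqn:interation_picture_kdv} shows $\partial_t^2 v \in H^2$ whenever $v \in H^5$: although $\partial_t^2 v$ naively contains a top-order contribution $\partial_x^4(u^2)$ (with $u=\e^{-t\partial_x^3}v$), this cancels exactly against $-2\partial_x(u\partial_x^3 u)$ arising from the KdV equation for $u$, leaving only expressions of derivative order at most three in $u$. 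For the second summand I would upgrade Lemma~\ref{lem:local_error_estimate_H2} to $\|\Phi_{t_n,\tau}(v(t_n)) - v(t_n+\tau)\|_{H^2} = O(\tau^2)$ under the $H^5$-hypothesis, revisiting the three contributions $A_1,A_{2,1},A_{2,2}$ of its proof with the sharper bound $\|v(t_n+s)-v(t_n)\|_{H^3}=O(s)$ (via $\partial_t v\in H^4$) and the improved $H^3$-bound from Eq.~\eqref{eqn:distance_next_time_step_from_previous_Hl+1}.

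With the bound on $\bar v$ in place, I would split $v(t_n+s)-\bar v = (v(t_n+s)-v(t_n+\tau/2)) + (v(t_n+\tau/2)-\bar v)$. The $s$-independent second piece contributes $O(\tau)\cdot O(\tau^2) = O(\tau^3)$ in $H^1$ directly via the bilinear estimate. For the first piece, expand $v(t_n+s)=v(t_n+\tau/2)+(s-\tau/2)\partial_t v(t_n+\tau/2)+R(s)$ with $\|R(s)\|_{H^2}=O((s-\tau/2)^2)$ (using $\partial_t^2 v\in H^2$), so the $R$-contribution also integrates to $O(\tau^3)$. The remaining linear-in-$(s-\tau/2)$ term
\begin{align*}
\int_0^\tau (s-\tau/2)\,B_s\bigl(\partial_t v(t_n+\tau/2),\,v(t_n+s)+\bar v\bigr)\,\dd s
\end{align*}
is where the symmetric structure becomes essential: denoting the integrand by $G(s)$ and Taylor-expanding $G(s)=G(\tau/2)+(s-\tau/2)G'(\xi_s)$, the zeroth-order contribution vanishes because $\int_0^\tau(s-\tau/2)\,\dd s = 0$, and the remainder is bounded by $O(\tau^3)\sup_s\|G'(s)\|_{H^1}$. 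A short Leibniz calculation shows the top-order $\partial_x^3$-terms in $\partial_s B_s$ cancel, leaving $\partial_s B_s(a,b) = \tfrac{3}{2}\e^{(t_n+s)\partial_x^3}\partial_x[(\partial_x^2 A)(\partial_x B)+(\partial_x A)(\partial_x^2 B)]$ with $A,B$ the twisted arguments, so that $\|G'(s)\|_{H^1}$ is controlled uniformly in $s$ provided all arguments lie in $H^4$.

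The main obstacle is the refined $H^2$ local error estimate $\|\Phi_{t_n,\tau}(v(t_n))-v(t_n+\tau)\|_{H^2}=O(\tau^2)$ under the $H^5$-assumption, which requires a careful re-derivation of the proof of Lemma~\ref{lem:local_error_estimate_H2} using the sharper bounds afforded by the extra regularity. A secondary technical subtlety is that controlling $G'(s)$ requires $\bar v\in H^4$, whereas Thm.~\ref{thm:fixed_point_iterations_v} is only stated for $l\in\{1,2,3\}$; this can be handled either by extending that result (and Lemma~\ref{lem:stability_lemma_1}) to higher $l$ via the same contraction argument, or by separating out the rough contribution $\bar v - v(t_n+\tau/2)$ (which is $O(\tau^2)$ in $H^2$ and so contributes $O(\tau^3)$ without the midpoint cancellation) from the smooth $H^5$-piece $v(t_n+\tau/2)$ on which the symmetric argument applies directly.
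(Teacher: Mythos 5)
Your proposal is correct in substance and rests on the same two structural ingredients as the paper's proof --- an expansion around the midpoint value $v(t_n+\tau/2)$ together with the estimate $\|\bar v - v(t_n+\tau/2)\|_{H^2}=\mathcal{O}(\tau^2)$, and a midpoint-quadrature cancellation for the term linear in $(s-\tau/2)$ --- but you execute both in a genuinely different way. The paper expands the two \emph{squares} around $(v(t_n+\tau/2))^2$ and handles the resulting cross terms in Fourier space: the analogue of your linear term is its $A_{1,1}$, controlled via the explicit oscillatory integral bound $\bigl|\frac{x\cos(x/2)-2\sin(x/2)}{x^2}\bigr|\leq\frac{|x|}{12}$ together with Lemma~\ref{lem:auxilliary_convolution_estimate}, and the analogue of your bound on $\bar v - v(t_n+\tau/2)$ is its $D_{1,1}$ estimate, obtained from the identity $\int_{\tau/2}^{\tau}\e^{-is\omega}\dd s-\int_0^{\tau/2}\e^{-is\omega}\dd s=\frac{(\e^{-i\tau\omega/2}-1)^2}{-i\omega}=\mathcal{O}(\tau^2\omega)$. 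You instead factor $a^2-b^2=(a-b)(a+b)$ first and run both cancellations in physical space: the quadrature cancellation via $\int_0^\tau(s-\tau/2)G(s)\dd s=\int_0^\tau(s-\tau/2)(G(s)-G(\tau/2))\dd s$ with the Leibniz identity $\partial_x^3(AB)-(\partial_x^3A)B-A\partial_x^3B=3(\partial_x^2A)(\partial_xB)+3(\partial_xA)(\partial_x^2B)$ playing the role of the resonance relation $(a+b)^3-a^3-b^3=3ab(a+b)$, and the $D_1$-type bound via a Taylor expansion of $\tfrac12(v(t_n)+v(t_n+\tau))-v(t_n+\tau/2)$ using $\partial_t^2v\in H^2$ (your cancellation of $\partial_x^4(u^2)$ against $-2\partial_x(u\partial_x^3u)$ is correct and is exactly the same resonance structure, landing on $6u_xu_{xx}$ and hence on $H^5$). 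The regularity count comes out identical, so neither approach buys extra generality; yours is arguably more transparent about \emph{why} the symmetry gives order two, while the paper's Fourier route avoids having to differentiate the propagator in time. Your two flagged obstacles are real but benign: the upgraded bound $\|\Phi_{t_n,\tau}(v(t_n))-v(t_n+\tau)\|_{H^2}=\mathcal{O}(\tau^2)$ under the $H^5$ hypothesis follows exactly as you describe (and is in effect already contained in the paper's $\mathcal{O}(\tau^2)$ bounds for $D_{1,2}$ and $D_{1,3}$), and your second workaround for the $H^4$-regularity of $\bar v$ --- peeling off $\bar v - v(t_n+\tau/2)$, which is $\mathcal{O}(\tau^2)$ in $H^2$ and needs no cancellation --- is the cleaner of the two fixes. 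One small omission: since $G(s)=B_s(\partial_tv(t_n+\tau/2),\,v(t_n+s)+\bar v)$ depends on $s$ both through the form and through its second argument, $G'(s)$ also contains the chain-rule term $B_s(\partial_tv(t_n+\tau/2),\partial_tv(t_n+s))$; this is harmless (bounded by $\|\partial_tv\|_{H^2}^2$) but should be recorded.
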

In order to prove this statement we have to rely on the following crucial estimate:
\begin{lemma}\label{lem:auxilliary_convolution_estimate}
	For any $j,l\in\mathbb{N}$ such that $j+l\geq 1$ there is a constant $c>0$ such that for all $f,g\in H^{j+l}$ and any $F\in L^2$ whose Fourier coefficients satisfy
\begin{align*}
\hat{F}_m\leq \sum_{m=a+b}|m|^l |\hat{f}_a||\hat{g}_b|, \quad \forall m\in\mathbb{Z},
\end{align*}
we have
\begin{align*}
\|F\|_{H^j}\leq c \|f\|_{H^{j+l}}\|g\|_{H^{j+l}}.
\end{align*}
\end{lemma}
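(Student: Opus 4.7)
The plan is to reduce this convolution estimate to the standard bilinear estimate stated in Lemma~\ref{lem:bilinear_estimates}. The key observation is that the hypothesis on $\hat{F}_m$ only involves the moduli $|\hat{f}_a|$ and $|\hat{g}_b|$, so without loss of generality we may replace $f$ and $g$ by auxiliary functions whose Fourier coefficients are nonnegative. Concretely, define $\tilde{f},\tilde{g}\in L^2$ via $\hat{\tilde{f}}_m:=|\hat{f}_m|$ and $\hat{\tilde{g}}_m:=|\hat{g}_m|$. By Plancherel (or directly from the definition of the $H^s$-norm in Fourier variables given in the paper), $\|\tilde{f}\|_{H^s}=\|f\|_{H^s}$ and $\|\tilde{g}\|_{H^s}=\|g\|_{H^s}$ for every $s\geq 0$.

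Next I would rewrite the convolution on the right-hand side as the Fourier coefficient of the product $\tilde{f}\tilde{g}$. Since $\hat{\tilde f}_a,\hat{\tilde g}_b\geq 0$, one has
\begin{align*}
\widehat{(\tilde{f}\tilde{g})}_m=\sum_{a+b=m}\hat{\tilde{f}}_a\hat{\tilde{g}}_b=\sum_{a+b=m}|\hat{f}_a||\hat{g}_b|,
\end{align*}
and this is a nonnegative real number. Combining with the assumed pointwise Fourier-side bound on $\hat{F}_m$ gives
\begin{align*}
|m|^{j}|\hat{F}_m|\leq |m|^{j+l}\,\widehat{(\tilde{f}\tilde{g})}_m,\qquad m\in\mathbb{Z}.
\end{align*}
Squaring and summing over $m\in\mathbb{Z}\setminus\{0\}$ then yields directly $\|F\|_{H^j}\leq \|\tilde{f}\tilde{g}\|_{H^{j+l}}$.

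Finally, I would apply the bilinear estimate from Lemma~\ref{lem:bilinear_estimates} with $r=j+l$. The hypothesis $j+l\geq 1$ in particular guarantees $r>1/2$, so that lemma is applicable and produces a constant $C_{j+l}>0$ with
\begin{align*}
\|\tilde{f}\tilde{g}\|_{H^{j+l}}\leq C_{j+l}\|\tilde{f}\|_{H^{j+l}}\|\tilde{g}\|_{H^{j+l}}=C_{j+l}\|f\|_{H^{j+l}}\|g\|_{H^{j+l}},
\end{align*}
using the isometry property of $f\mapsto\tilde{f}$ in $H^s$. Chaining the two inequalities gives the claim with $c=C_{j+l}$.

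The only mildly delicate point is the transition from a pointwise Fourier-side estimate to an $H^j$ norm via a bilinear estimate, which is why the ``auxiliary nonnegative'' functions $\tilde{f},\tilde{g}$ are introduced: they ensure that the convolution $\sum_{a+b=m}|\hat{f}_a||\hat{g}_b|$ is itself the Fourier coefficient of an honest $L^2$ product, so that the paper's existing bilinear estimate applies without modification. I do not foresee any real obstacle beyond this bookkeeping; the hypothesis $j+l\geq 1$ is used exactly once, namely to ensure $j+l>1/2$ so that Lemma~\ref{lem:bilinear_estimates} may be invoked.
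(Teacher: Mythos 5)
Your proof is correct, but it takes a genuinely different route from the paper's. You reduce the statement to the bilinear (algebra) estimate of Lemma~\ref{lem:bilinear_estimates} by introducing the auxiliary functions $\tilde f,\tilde g$ with nonnegative Fourier coefficients $|\hat f_a|,|\hat g_b|$, so that the convolution in the hypothesis becomes the honest Fourier coefficient of the product $\tilde f\tilde g$ and the pointwise bound $|m|^j|\hat F_m|\le|m|^{j+l}\widehat{(\tilde f\tilde g)}_m$ gives $\|F\|_{H^j}\le\|\tilde f\tilde g\|_{H^{j+l}}$ directly; the hypothesis $j+l\ge1$ enters only through $j+l>1/2$. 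The paper instead proves the estimate from scratch: it splits $|a+b|^{j+l}\le C(|a|^{j+l}+|b|^{j+l})$, applies the discrete Minkowski inequality to each resulting convolution, and uses Cauchy--Schwarz together with the convergence of $\sum_{b\ne0}|b|^{-2(j+l)}$. Your argument is shorter and exposes the structural point that the lemma is the $H^{j+l}$ algebra property in disguise, at the cost of importing Lemma~\ref{lem:bilinear_estimates} (itself only quoted from the literature); the paper's computation is self-contained and makes the constant explicit. Both arguments in fact work for any real exponent $j+l>1/2$. The only points you should make explicit are that the hypothesis $\hat F_m\le\dots$ is to be read as a bound on $|\hat F_m|$, and that the convolution formula for $\widehat{(\tilde f\tilde g)}_m$ converges absolutely because $\tilde f,\tilde g\in H^{j+l}$ with $j+l>1/2$ implies their Fourier coefficients lie in $\ell^1$; neither is a real obstacle.
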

\begin{proof}
The proof is given in Appendix~\ref{app:proof_of_auxilliary_convolution_estimate}.
\end{proof}
\begin{proof}[Proof of Lemma~\ref{lem:local_error_estimate_H1_second_order}] Our starting point is again Duhamel's formula \eqref{eqn:Duhamel_for_v} and the definition of our numerical scheme \eqref{eqn:resonance_based_midpoint_rule_v_KdV} which yields:
	\begin{align}\begin{split}\label{eqn:initial_estimate_local_error_H1_second_order}
\phi_{t_n,\tau}&({z})-\Phi_{t_n,\tau}({z})\\
&=\frac{1}{2}\int_{0}^{\tau}\e^{(t_n+s)\partial_x^3}\partial_x\left[\left(\e^{-(t_n+s)\partial_x^3}{\phi_{t_n,s}(z)}\right)^2-\left(\e^{-(t_n+s)\partial_x^3}\frac{1}{2}\left({z}+\Phi_{t_n,\tau}({z})\right)\right)^2\right]\dd s.
\end{split}
	\end{align}
In order to understand higher order convergence properties of the numerical method we have to iterate \eqref{eqn:Duhamel_for_v} to obtain the following expression:
	\begin{align*}
	&\hspace{-1.5cm}\left(\e^{-(t_n+s)\partial_x^3}{\phi_{t_n,s}(z)}\right)^2\\
 &=\left(\e^{-(t_n+s)\partial_x^3}{\phi_{t_n,\tau/2}(z)}+\frac{1}{2}\e^{-(t_n+s)\partial_x^3}\int_{\tau/2}^{s}\e^{(t_n+\tilde{s})\partial_x^3}\partial_x\left(\e^{-(t_n+\tilde{s})\partial_x^3}{\phi_{t_n,\tilde{s}}(z)}\right)^2\dd \tilde{s}\right)^2\\
	&=\left(\e^{-(t_n+s)\partial_x^3}{\phi_{t_n,\tau/2}(z)}\right)^2\\
	&\quad+\underbrace{\left(\e^{-(t_n+s)\partial_x^3}{\phi_{t_n,\tau/2}(z)}\right)\e^{-(t_n+s)\partial_x^3}\int_{\tau/2}^{s}\e^{(t_n+\tilde{s})\partial_x^3}\partial_x\left(\e^{-(t_n+\tilde{s})\partial_x^3}{\phi_{t_n,\tilde{s}}(z)}\right)^2\dd \tilde{s}}_{=:A_1}\\
	&\quad\quad+\underbrace{\frac{1}{4}\left(\e^{-(t_n+s)\partial_x^3}\int_{\tau/2}^{s}\e^{(t_n+\tilde{s})\partial_x^3}\partial_x\left(\e^{-(t_n+\tilde{s})\partial_x^3}{\phi_{t_n,\tilde{s}}(z)}\right)^2\dd \tilde{s}\right)^2}_{=:A_2}.
	\end{align*}
We also have
\begin{align*}
\frac{1}{2}\left({z}+\Phi_{t_n,\tau}({z})\right)-\phi_{t_n,\tau/2}({z})
&=\frac{1}{4}\int_{0}^{\tau}\e^{(t_n+s)\partial_x^3}\partial_x\left[\left(\e^{-(t_n+s)\partial_x^3}\frac{1}{2}\left({z}+\Phi_{t_n,\tau}({z})\right)\right)^2\right]\dd s\\
&\quad\quad -\frac{1}{2}\int_{0}^{\tau/2}\e^{(t_n+s)\partial_x^3}\partial_x\left[\left(\e^{-(t_n+s)\partial_x^3}{\phi_{t_n,s}(z)}\right)^2\right]\dd s\\
&=:D_1
\end{align*}
Thus
\begin{align}\nonumber
&\left(\e^{-(t_n+s)\partial_x^3}\frac{1}{2}\left({z}+\Phi_{t_n,\tau}({z})\right)\right)^2\\\nonumber
&\quad\quad=\left(\e^{-(t_n+s)\partial_x^3}{\phi_{t_n,\tau/2}(z)}+\e^{-(t_n+s)\partial_x^3}\left(\frac{1}{2}\left({z}+\Phi_{t_n,\tau}({z})\right)-{\phi_{t_n,\tau/2}(z)}\right)\right)^2\\\label{eqn:definition_of_B_1}
&\quad\quad=\left(\e^{-(t_n+s)\partial_x^3}{\phi_{t_n,\tau/2}(z)}\right)^2+\underbrace{\left(\e^{-(t_n+s)\partial_x^3}{\phi_{t_n,\tau/2}(z)}\right)\left(\e^{-(t_n+s)\partial_x^3}D_1\right)}_{=:B_1}+\left(\e^{-(t_n+s)\partial_x^3}D_1\right)^2{.}
\end{align}
Thus we have from \eqref{eqn:initial_estimate_local_error_H1_second_order}
\begin{align}\begin{split}\label{eqn:local_error_H1_second_order_combined_estimate}
\left\|\phi_{t_n,\tau}({z})-\Phi_{t_n,\tau}({z})\right\|_{H^1}&\leq c\left\|\int_{0}^{\tau}\e^{(t_n+s)\partial_x^3}\partial_xA_1\dd s\right\|_{H^1}\\
&\quad+ c\int_{0}^{\tau}\left\|A_2\right\|_{H^2}+\left\|B_1\right\|_{H^2}+\left\|\left(\e^{-(t_n+s)\partial_x^3}D_1\right)^2\right\|_{H^2}\dd s.\end{split}
\end{align}
Let us begin by estimating the contributions from $A_2$ and from $\left(\e^{-(t_n+s)\partial_x^3}D_1\right)^2$. For $A_2$ we have
\begin{align}\nonumber
\int_0^\tau \left\|A_2\right\|_{H^2}\dd s&\leq c\int_0^\tau \left\|\int_{\tau/2}^{s}\e^{(t_n+\tilde{s})\partial_x^3}\partial_x\left(\e^{-(t_n+\tilde{s})\partial_x^3}{\phi_{t_n,\tilde{s}}(z)}\right)^2\dd \tilde{s}\right\|_{H^2}^2\dd s \\\label{eqn:estimate_A_2_for_local_error_H1_second_order}
&\leq \tilde{c} \int_0^\tau \left(\int_{\tau/2}^{s}\dd \tilde{s}\right)^2\dd s \sup_{s\in[0,\tau]}\|{\phi_{t_n,s}(z)}\|_{H^3}^4\leq \tilde{c}\tau^3\sup_{s\in[0,\tau]}\|{\phi_{t_n,s}(z)}\|_{H^3}^4{,}
\end{align}
for some constants $c,\tilde{c}>0$ independent of $v$. Similarly we find (for potentially different values of $c,\tilde{c}>0$) using Theorem~\ref{thm:fixed_point_iterations_v} under the assumption that $\tau\in[0,\tau_R)$ as given in the statement of the theorem,
\begin{align*}
\int_0^\tau \left\|\left(\e^{-(t_n+s)\partial_x^3}D_1\right)^2\right\|_{H^2}\dd s&\leq c\int_0^\tau \left\|\int_{0}^{\tau}\e^{(t_n+s)\partial_x^3}\partial_x\left(\e^{-(t_n+s)\partial_x^3}\frac{1}{2}\left({z}+\Phi_{t_n,\tau}({z})\right)\right)^2\dd s\right\|_{H^2}^2\dd s\\
&\quad+c\int_0^\tau\left\|\int_{0}^{\tau/2}\e^{(t_n+s)\partial_x^3}\partial_x\left(\e^{-(t_n+s)\partial_x^3}{\phi_{t_n,s}(z)}\right)^2\dd s\right\|_{H^2}^2\dd s\\
&\leq \tilde{c}\tau^3\left(\sup_{s\in[0,\tau]}\left\|{\phi_{t_n,s}(z)}\right\|_{H^3}^4+\left\|\Phi_{t_n,\tau}({z})-{z}\right\|_{H^3}^4\right)\\
&\leq C_R\tau^3 (1+\tau),
\end{align*}
where $C_R>0$ depends on $R$. Since $\tau<\tau_R$ we thus have the estimate
\begin{align}\label{eqn:estimate_D_1_square_for_local_error_H1_second_order}
\int_0^\tau \left\|\left(\e^{-(t_n+s)\partial_x^3}D_1\right)^2\right\|_{H^2}\dd s&\leq\tau^3 C_R.
\end{align}
Now we aim to estimate the contribution from $B_1$:
\begin{align*}
\int_0^\tau \left\|B_1\right\|_{H^2}\dd s.
\end{align*}
To achieve a suitable estimate let us express $D_1$ in the following way
\begin{align}\begin{split}\label{eqn:fine_estimate_D_1}
D_1&=\underbrace{\frac{1}{4}\int_{0}^{\tau}\e^{(t_n+s)\partial_x^3}\partial_x\left[\left(\e^{-(t_n+s)\partial_x^3}{z}\right)^2\right]\dd s -\frac{1}{2}\int_{0}^{\tau/2}\e^{(t_n+s)\partial_x^3}\partial_x\left[\left(\e^{-(t_n+s)\partial_x^3}{z}\right)^2\right]\dd s}_{=:D_{1,1}}\\
&\quad +\underbrace{\frac{1}{4}\int_{0}^{\tau}\e^{(t_n+s)\partial_x^3}\partial_x\left[\left(\e^{-(t_n+s)\partial_x^3}\frac{1}{2}\left({z}+\Phi_{t_n,\tau}({z})\right)\right)^2-\left(\e^{-(t_n+s)\partial_x^3}{z}\right)^2\right]\dd s}_{=:D_{1,2}} \\
&\quad\quad-\underbrace{\frac{1}{2}\int_{0}^{\tau/2}\e^{(t_n+s)\partial_x^3}\partial_x\left[\left(\e^{-(t_n+s)\partial_x^3}{z}\right)^2-\left(\e^{-(t_n+s)\partial_x^3}{\phi_{t_n,s}(z)}\right)^2\right]\dd s}_{=:D_{1,3}}.
\end{split}
\end{align}
We have furthermore the following expression in terms of Fourier coefficients:
\begin{align*}
D_{1,1}&=\frac{1}{4}\sum_{m\in\mathbb{Z}}\e^{imx}\sum_{m=a+b}\e^{-it_n(m^3-a^3-b^3)}im \hat{v}_a\hat{v}_b\left(\int_{\tau/2}^\tau \e^{-is(m^3-a^3-b^3)}\dd s-\int_{0}^{\tau/2} \e^{-is(m^3-a^3-b^3)}\dd s\right)\\
&=\frac{1}{4}\sum_{m\in\mathbb{Z}}\e^{imx}\sum_{m=a+b}\e^{-it_n(m^3-a^3-b^3)}im \hat{v}_a\hat{v}_b\frac{(\e^{-i\frac{\tau mab}{2}}-1)^2}{-imab}.
\end{align*}
Noting that $|\exp(ix)-1|/|x|\leq 1$ for $x\in \mathbb{R}$ we therefore have that
\begin{align*}
\left|\widehat{\partial_x^2 D_{1,1}}_m\right|\leq \tau^2\sum_{m=a+b}|m|^4|a||b|\hat{v}_a||\hat{v}_b|.
\end{align*}
Thus, by Lemma~\ref{lem:auxilliary_convolution_estimate}, it immediately follows that 
\begin{align*}
\|D_{1,1}\|_{H^2}\leq c\tau^2 \left\|v\right\|_{H^5}^2,
\end{align*}
for some constant $c>0$, independent of $R$. We also have, using \eqref{eqn:Duhamel_for_v},
\begin{align*}
\|D_{1,3}\|_{H^2}&\leq \tilde{c}\int_0^{\tau/2}\left\|{z}+{\phi_{t_n,s}(z)}\right\|_{H^3}\left\|{z}-{\phi_{t_n,s}(z)}\right\|_{H^3}\dd s\leq \tau^2 C_R,
\end{align*}
where $C_R>0$ depends on $R$, and a similar estimate can be derived analogously for $D_{1,2}$ for any $\tau\in[0,\tau_R)$ as defined in the assumptions of Theorem~\ref{thm:fixed_point_iterations_v}:
\begin{align*}
\|D_{1,2}\|_{H^2}&\leq \tau^2 C_R.
\end{align*}
From \eqref{eqn:fine_estimate_D_1}\&\eqref{eqn:definition_of_B_1} we thus have for any $\tau\in[0,\tau_R)$ and some $\tilde{C}_R>0$ depending only on $R$
\begin{align}\label{eqn:estimate_of_B_1_local_error_H1_second_order}
\int_0^\tau \left\|B_1\right\|_{H^2}\dd s&\leq \tau^3 \tilde{C}_R.
\end{align}
It remains to estimate the contribution from $A_1$. For this it is helpful to write {$\phi'_{t_n,t}(z):=\partial_t \phi_{t_n,t}(z)$ and to observe that}
\begin{align*}
A_1&=\left(\e^{-(t_n+s)\partial_x^3}{\phi_{t_n,\tau/2}(z)}\right)\e^{-(t_n+s)\partial_x^3}\int_{\tau/2}^{s}{\phi'_{t_n,\tilde{s}}(z)}\dd \tilde{s}\\
&=\underbrace{\left(\e^{-(t_n+s)\partial_x^3}{\phi_{t_n,\tau/2}(z)}\right)\e^{-(t_n+s)\partial_x^3}(s-\tau/2){\phi'_{t_n,\tau/2}(z)}}_{=:A_{1,1}}\\
&\quad+\underbrace{\left(\e^{-(t_n+s)\partial_x^3}{\phi_{t_n,\tau/2}(z)}\right)\e^{-(t_n+s)\partial_x^3}\int_{\tau/2}^{s}\left({\phi'_{t_n,\tilde{s}}(z)}-{\phi'_{t_n,\tau/2}(z)}\right)\dd \tilde{s}}_{=:A_{1,2}}.
\end{align*}
Now we observe that
\begin{align}\label{eqn:estimate_A_1_local_error_H1_second_order}
\left\|\int_{0}^{\tau}\e^{(t_n+s)\partial_x^3}\partial_xA_1\dd s\right\|_{H^1}\leq \left\|\int_{0}^{\tau}\e^{(t_n+s)\partial_x^3}\partial_xA_{1,1}\dd s\right\|_{H^1}+\int_{0}^{\tau}\|A_{1,2}\|_{H^2}\dd s.
\end{align}
And we have by Lemma~\ref{lem:bilinear_estimates}
\begin{align*}
\int_0^\tau \|A_{1,2}\|_{H^2}\dd s\leq c \tau ^2 \sup_{s\in[0,\tau]}\|{\phi_{t_n,s}(z)}\|_{H^2}\sup_{s\in[0,\tau]}\|{\phi'_{t_n,s}(z)}-{\phi'_{t_n,\tau/2}(z)}\|_{H^2}.
\end{align*}
Now by \eqref{eqn:interation_picture_kdv} we have in terms of Fourier coefficients{, writing as a shorthand notation $\hat{v}_a(t_n+s),\hat{v}_{a}'(t_n+s)$ for the $a^{th}$ Fourier coefficient of $\phi_{t_n,s}(z),\phi'_{t_n,s}(z)$ respectively,}
\begin{align*}
{\phi'_{t_n,s}(z)}-{\phi'_{t_n,\tau/2}(z)}&=\int_{\tau/2}^{s}{\partial_{t}^2\phi_{t_n,t}(z)\vert_{t=\tilde{s}}}\dd\tilde{s}\\
&=\sum_{m\in\mathbb{Z}}\e^{imx}\sum_{m=a+b}\int_{\tau/2}^{s}\frac{1}{2}\e^{-i(t_n+\tilde{s})3abm}(-3iabm)im\hat{v}_a(t_n+\tilde{s})\hat{v}_b(t_n+\tilde{s})\dd\tilde{s}\\
&\quad+\sum_{m\in\mathbb{Z}}\e^{imx}\sum_{m=a+b}\int_{\tau/2}^{s}\e^{-i(t_n+\tilde{s})3abm}im\widehat{v'}_a(t_n+\tilde{s})\hat{v}_b(t_n+\tilde{s})\dd\tilde{s}\\
&=\frac{1}{2}\int_{\tau/2}^s\e^{(t_n+\tilde{s})\partial_x^3}\partial_x^2\left(\e^{-(t_n+\tilde{s})\partial_x^3}\partial_x{\phi_{t_n,\tilde{s}}(z)}\right)^2\dd \tilde{s}\\
&\quad+\int_{\tau/2}^s \e^{(t_n+\tilde{s})\partial_x^3}\partial_x\left[\left(\e^{-(t_n+\tilde{s})\partial_x^3}{\phi_{t_n,\tilde{s}}(z)}\right)\left(\e^{-(t_n+\tilde{s})\partial_x^3}{\phi'_{t_n,\tilde{s}}(z)}\right)\right]\dd \tilde{s}\\
&=\frac{1}{2}\int_{\tau/2}^s\e^{(t_n+\tilde{s})\partial_x^3}\partial_x^2\left(\e^{-(t_n+\tilde{s})\partial_x^3}\partial_x{\phi_{t_n,\tilde{s}}(z)}\right)^2\dd \tilde{s}\\
&\quad+\frac{1}{2}\int_{\tau/2}^s\e^{(t_n+\tilde{s})\partial_x^3}\partial_x\left[\left(\e^{-(t_n+\tilde{s})\partial_x^3}{\phi_{t_n,\tilde{s}}(z)}\right)\partial_x\left(\e^{-(t_n+\tilde{s})\partial_x^3}{\phi_{t_n,\tilde{s}}(z)}\right)^2\right]\dd \tilde{s}.
\end{align*}
Thus, similarly to the derivation of \eqref{eqn:estimate_A_2_for_local_error_H1_second_order}, we have the estimate
\begin{align}\label{eqn:estimate_A_12}
\int_{0}^{\tau}\|A_{1,2}\|_{H^2}\dd s\leq c\tau^3\left( \sup_{s\in[0,\tau]}\|{\phi_{t_n,s}(z)}\|_{H^5}^3+\sup_{s\in[0,\tau]}\|{\phi_{t_n,s}(z)}\|_{H^5}^2\right).
\end{align}
To estimate $A_{1,1}$ let us look at the Fourier coefficients of $A:=\int_{0}^{\tau}\e^{(t_n+s)\partial_x^3}\partial_xA_{1,1}\dd s$. We have
\begin{align*}
|\hat{A}_m|&=\left|\sum_{m=a+b} im \int_0^\tau \e^{-i(m^3-a^3-b^3)(t_n+s)}(s-\tau/2)\dd s \hat{v}_a(t_n+\tau/2)\hat{v}_b'(t_n+\tau/2)\right|\\
&=\left|\sum_{m=a+b} im \int_0^\tau \e^{-i(m^3-a^3-b^3)(t_n+s)}(s-\tau/2)\dd s \hat{v}_a(t_n+\tau/2)\hat{v}_b'(t_n+\tau/2)\right|\\
&\leq \sum_{m=a+b}\left|\frac{3mab\tau \cos\left(\frac{3mab\tau}{2}\right)-2\sin\left(\frac{3mab\tau}{2}\right)}{(3mab)^2}\right||m|\left|\hat{v}_a(t_n+\tau/2)\hat{v}_b'(t_n+\tau/2)\right|.
\end{align*}
Using Lemma~\ref{lem:auxilliary_convolution_estimate} and observing that $\left|\frac{x\cos\left(x/2\right)-2\sin(x/2)}{x^2}\right|\leq \frac{|x|}{12}$ for all $x\in\mathbb{R}$, we have
\begin{align}\nonumber
\left\|\int_0^\tau \e^{(t_n+s)\partial_x^3}\partial_x A_{1,1}\dd s\right\|_{H^1}=\|A\|_{H^1}&\leq c\tau^3 \left\|{\phi_{t_n,\tau/2}(z)}\right\|_{H^4}\left\|{\phi'_{t_n,\tau/2}(z)}\right\|_{H^4}\\\label{eqn:estimate_A_11}
&\leq \tilde{c}\tau^3\left\|{\phi_{t_n,\tau/2}(z)}\right\|_{H^4}\left\|{\phi_{t_n,\tau/2}(z)}\right\|_{H^5}^2
\end{align}
for some constants $c,\tilde{c}>0$.

We can now easily conclude the result by combining the estimates \eqref{eqn:local_error_H1_second_order_combined_estimate}, \eqref{eqn:estimate_A_2_for_local_error_H1_second_order}, \eqref{eqn:estimate_D_1_square_for_local_error_H1_second_order}, \eqref{eqn:estimate_of_B_1_local_error_H1_second_order}, \eqref{eqn:estimate_A_1_local_error_H1_second_order}, \eqref{eqn:estimate_A_12} \& \eqref{eqn:estimate_A_11}.
\end{proof}

\begin{remark}
	A slightly more detailed (and tedious) analysis of the error in terms of Fourier coefficients would allow us to show the following slightly more resolved result which implies Lemmas~\ref{lem:local_error_estimate_H1_first_order}\&\ref{lem:local_error_estimate_H1_second_order}: Fix $R>0$, then for any $\gamma\in[1,2]$ there is a $\tau_{R,\gamma},c_{R,
	\gamma}>0$ such that for all $\tau\in [0,\tau_{R,\gamma})$ and any ${z}\in B_R(H^{1+2\gamma})$ such that $\sup_{t\in[0,\tau]}\|\phi_{t_n,\tau}({z})\|_{H^{1+2\gamma}}<R$ we have
	\begin{align*}
	\|\phi_{t_n,\tau}({z})-\Phi_{t_n,\tau}({z})\|_{H^1}\leq c_{R,
	\gamma}\tau^{1+{\gamma}}.
	\end{align*}
\end{remark}

\paragraph{Global error in $H^1$.}
We can now prove Theorems~\ref{thm:global_error_in_H1_first_order}\&\ref{thm:global_error_in_H1_second_order} in similar vein to our proof of Theorem~\ref{thm:global_error_in_H2}.
\begin{proof}[Proof of Theorem~\ref{thm:global_error_in_H1_first_order}]
		By the triangle inequality we have
	\begin{align}\label{eqn:global_error_proof_general_estimate}
	\|v(t_n)-v^{n}\|_{H^1}\leq \left\|\Phi_{t_{n-1},\tau}(v(t_{n-1}))-\Phi_{t_{n-1},\tau}(v^{n-1})\right\|_{H^1}+\left\|\phi_{t_{n-1},\tau}(v(t_{n-1}){)}-\Phi_{t_{n-1},\tau}(v(t_{n-1}))\right\|_{H^1}.
	\end{align}
	We note that by Theorem~\ref{thm:global_error_in_H2} we may choose $\tau_R$ such that, whenever $\tau\in[0,\tau_R)$ we have $v^{k}\in B_R(H^2)$ for all $0\leq k\leq n$. Thus we may iterate above estimate \eqref{eqn:global_error_proof_general_estimate} and find (so long as $\tau_R$ is smaller than the constants defined in Lemmas~\ref{lem:stability_in_H1} \& \ref{lem:local_error_estimate_H1_first_order} and $\tau\in [0,\tau_R)$) that, using Lemmas \ref{lem:stability_in_H1} \& \ref{lem:local_error_estimate_H1_first_order},
\begin{align*}
\|v(t_n)-v^{n}\|_{H^1}&\leq \e^{\tau C_R}\|v(t_{n-1})-v^{n-1}\|_{H^1}+c_R\tau^{2}\\
&\leq \e^{2\tau C_R}\|v(t_{n-2})-v^{n-2}\|_{H^1}+\e^{\tau C_R}c_R\tau^{2}+c_R\tau^{2}\\
&\leq c_R\tau^{2}\sum_{k=0}^{n-1}\e^{k\tau C_R}\leq c_R\tau t_n \e^{t_n C_R}{,}
\end{align*}
which completes the proof.
\end{proof}
\begin{proof}[Proof of Theorem~\ref{thm:global_error_in_H1_second_order}]
	This result follows analogously by replacing Lemma~\ref{lem:local_error_estimate_H1_first_order} with Lemma~\ref{lem:local_error_estimate_H1_second_order} in the above proof.
\end{proof}
\subsection{Convergence analysis of the resonance-based midpoint rule for the NLSE}\label{sec:convergence_analysis_NLSE}
As mentioned in Example~\ref{ex:resonance-based_midpoint_rule} the resonance-based midpoint rule can also be constructed for the NLSE (and in particular is given in \eqref{eqn:resonance_based_midpoint_rule_u_NLSE}), {however we highlight that it differs significantly from the analysis presented in \cite{ostermann2018low} because we introduced the novel kernel approximation $\mathcal{K}_2(s;k,k_1,k_2,k_3)$ in the construction of our scheme \eqref{eqn:resonance_based_midpoint_rule_u_NLSE}.} The convergence analysis of this method can be performed similarly to the KdV case described in Section~\ref{sec:convergence_analysis_KdV}. For completeness we outline the main steps, in order also to highlight the regularity assumptions required for the NLSE case. The main result in this section is the convergence estimate:
\begin{theorem}\label{thm:global_error_midpoint_NLSE}
Let $u(t)$ be the exact solution to \eqref{eqn:Cauchy_problem_NLS} and be $u^n, n\geq 0,$ the iterates in the numerical method \eqref{eqn:resonance_based_midpoint_rule_u_NLSE}, and $t_n=n\tau$. For any $r>1/2, R>0 ,\gamma\in[0,2]$, there is a $\tau_R>0$ such that for all $\tau\in [0,\tau_R)$: If $\sup_{t\in [0,T]}\|u(t)\|_{H^{r+\gamma}}<R/2$, we have
	\begin{align*}
	\|u(t_n)-u^{n}\|_{H^r}\leq \tau^{\gamma} {C_{R,T,\gamma,r}},\quad \forall 0\leq n\leq\left\lfloor\frac{T}{\tau}\right\rfloor,
	\end{align*}
for some constant ${C_{R,T,\gamma,r}}>0$ depending on {$R,T,\gamma,r$, but which may be chosen independently of $\tau$.}
\end{theorem}

\subsubsection{Solution of implicit equations}
Our first step is again the solution of the implicit equations using fixed point iterates. For this we introduce the following auxilliary function in the NLSE case:
\begin{align*}
	\mathcal{S}(u):=e^{i\tau {\partial_x^2}}u^{n}+\tau e^{i\tau {\partial_x^2}}\mathcal{F}^{[NLSE,2]}_0\left(\tau; 1;\frac{u^{n}+e^{-i\tau{\partial_x^2}} u}{2}\right)
\end{align*}

Then we can prove the following crucial result:
\begin{theorem}\label{thm:sln_of_implicit_eqns_NLSE}
	Let $R>0$ and $s>1/2$. Then there is a constant $\tau_R$ such that for all $\tau\in(0,\tau_R)$ and any $u^n\in B_R(H^{s})=\{u\in H^s\,\vert\, \|u\|_{H^s}<R\}$ we have that $u^{n+1}$, the exact solution of \eqref{eqn:resonance_based_midpoint_rule_u_NLSE}, is given by the following limit in $H^s$:
	\begin{align}\label{eqn:limit_expression_for_u^{k+1}}
		u^{n+1}=\lim_{j\rightarrow \infty}\mathcal{S}^{(j)}(e^{i\tau{\partial_x^2}}u^n),\quad \text{where\ }\mathcal{S}^{(j)}(u)=\underbrace{\mathcal{S}\circ\cdots\circ\mathcal{S}}_{j\text{-times}}(u).
	\end{align}
Moreover, we have the estimate
\begin{align}\label{eqn:estimate_implicit_equation_forward_distance}
	\left\|u^{n+1}-e^{i\tau{\partial_x^2}}u^{n}\right\|_{H^s}\leq \tau \tilde{C}_R,
\end{align}
for some $\tilde{C}_R$ which depends only on $R$ (and $s$).
\end{theorem}
For the proof of this statement we exploit the following stability estimate.
\begin{lemma}\label{lem:crucial_stability_estimate}
	Fix $R>0, s>1/2$, then there is a $\tau_R>0$ such that for every $\tau\in(0,\tau_R)$ and any $u^{n},w,v\in B_R(H^s)$ we have 
	\begin{align*}
		\|\mathcal{S}(w)-\mathcal{S}(v)\|_{H^s}\leq \tau C_{R}\left\|w-v\right\|_{H^s}
	\end{align*}
\end{lemma}
\begin{proof}
Since $\exp(it{\partial_x^2})$ is an isometry on $H^{s}$ we have
    \begin{align*}
		&\|\mathcal{S}(w)-\mathcal{S}(v)\|_{H^s}^2=\\
		&\quad\tau^2|\mu|^2 \sum_{k\in\mathbb{Z}}\langle k\rangle^{2s}\left|\sum_{k+k_1=k_2+k_3}\left[\varphi_{1}(-2i\tau kk_1)+\varphi_{1}(2i\tau k_2k_3)-1\right]\left(\overline{(\hat{u}_{k_1}^{n}+\hat{w}_{k_1})}(\hat{u}_{k_2}^{n}+\hat{w}_{k_2})(\hat{u}_{k_3}^{n}+\hat{w}_{k_3})\right.\right.\\
 &\quad\quad\quad\quad\quad\quad\quad\quad\quad\quad\quad\quad\quad\quad\quad\quad\quad\quad\quad\quad\quad\quad\quad\quad\quad\quad\quad\quad\left.\left. -\overline{(\hat{u}_{k_1}^{n}+\hat{v}_{k_1})}(\hat{u}_{k_2}^{n}+\hat{v}_{k_2})(\hat{u}_{k_3}^{n}+\hat{v}_{k_3})\right)\right|^2{.}
	\end{align*}
Now we have the simple expression: $abc-def=(a-d)bc+d(b-e)c+de(c-f)$, which implies after a few applications of the triangle inequality and noting that the coefficients $\varphi_{1}(-2i\tau kk_1)+\varphi_{1}(2i\tau k_2k_3)-1$ are uniformly bounded in $k,k_j, j=1,2,3$, that
\begin{align*}
\|\mathcal{S}(w)-\mathcal{S}(v)\|_{H^s}^2\leq C \tau^2 \left\|(w-v)\right\|_{H^s}^2\left\|u^{n}+w\right\|_{H^s}\left\|u^{n}+v\right\|_{H^s}
\end{align*}
and the result follows.
\end{proof}
\begin{proof}[Proof of Theorem~\ref{thm:sln_of_implicit_eqns_NLSE}]
	If we take $\tau_R=1/(2C_R)$ with $C_R$ as in Lemma~\ref{lem:crucial_stability_estimate} we find that $\|\mathcal{S}(w)-\mathcal{S}(v)\|_{H^s}\leq \frac{1}{2}\|w-v\|_{H^s}$ and thus \eqref{eqn:limit_expression_for_u^{k+1}} follows by the Banach fixed point theorem. Let us now write
	\begin{align*}
	\left\|\mathcal{S}^{(J)}\left(e^{i\tau{\partial_x^2}}u^{k}\right)-e^{i\tau{\partial_x^2}}u^k\right\|_{H^s}&\leq \sum_{j=0}^{J-1}\left\|\mathcal{S}^{(j+1)}\left(e^{i\tau{\partial_x^2}}u^{k}\right)-\mathcal{S}^{(j)}\left(e^{i\tau{\partial_x^2}}u^{k}\right)\right\|_{H^s}\\
	&\leq \sum_{j=0}^{J-1}2^{-j}\left\|\mathcal{S}\left(e^{i\tau{\partial_x^2} }u^{k}\right)-e^{i\tau{\partial_x^2}}u^{k}\right\|_{H^{s}}\\
	&\leq 2\left\|\mathcal{S}\left(e^{i\tau{\partial_x^2}}u^k\right)-e^{i\tau{\partial_x^2}}u^{k}\right\|_{H^s}\leq 2 C_R\tau,
	\end{align*}
where the final estimate follows similarly to the proof of Lemma~\ref{lem:crucial_stability_estimate}. The result then follows analogously to the proof of Theorem~\ref{thm:fixed_point_iterations_v}.
\end{proof}

\subsubsection{Stability}
Having understood the solution of the implicit equation we can turn our attention to the stability analysis of the numerical scheme. Let us denote by $\Phi_{\tau}(u)$ the solution of the implicit time stepping scheme \eqref{eqn:resonance_based_midpoint_rule_u_NLSE}.
\begin{proposition}\label{prop:stability_estimate_NLSE}
	Fix $R>0$ and $s>1/2$. Then there is a $\tau_{R}>0$ and $C_R>0$ such that for all $\tau\in(0,\tau_R)$ and any $w,v\in B_{R}(H^s)$ we have
	\begin{align*}
		\left\|\Phi_{\tau}(v)-\Phi_{\tau}(w)\right\|_{H^s}\leq \exp\left(\tau C_R\right)\|v-w\|_{H^s}
	\end{align*}
where $C_R>0$ depends only on $R$ (and $s$).
\end{proposition}
\begin{proof}
We have
\begin{align*}
	&\left\|\Phi_{\tau}(v)-\Phi_{\tau}(w)\right\|_{H^s}\leq  \left\|e^{i\tau{\partial_x^2}}(v-w)\right\|_{H^s}\\
&\quad+\tau\left\|e^{i\tau{\partial_x^2}}\mathcal{F}^{[NLSE,2]}_0\left(\tau; 1;\frac{v+e^{-i\tau{\partial_x^2}} \Phi_{\tau}(v)}{2}\right)-e^{i\tau{\partial_x^2}}\mathcal{F}^{[NLSE,2]}_0\left(\tau; 1;\frac{w+e^{-i\tau{\partial_x^2}} \Phi_{\tau}(w)}{2}\right)\right\|_{H^s}
\end{align*}
Using Lemma~\ref{lem:crucial_stability_estimate} we find that 
\begin{align*}
	\left\|\Phi_{\tau}(v)-\Phi_{\tau}(w)\right\|_{H^s}\leq \left\|v-w\right\|_{H^s}+\tau \frac{C_R}{2}\left\|(v-w)+(\Phi_{\tau}(v)-\Phi_{\tau}(w))\right\|_{H^s}
\end{align*}
which implies using the triangle inequality again:
\begin{align*}
	\left\|\Phi_{\tau}(v)-\Phi_{\tau}(w)\right\|_{H^s}\leq\frac{1+\tau C_R/2}{1-\tau C_R/2}\|v-w\|_{H^s}\leq e^{\frac{3\tau C_R}{2}}\|v-w\|_{H^s},
\end{align*}
so long as we take $\tau_R<C_R^{-1}$.
\end{proof}

\subsubsection{Local error}
The next step is to estimate the local error of the approximation. This depends on the specifics of the low-regularity error \eqref{eqn:local_error_second_order_kernel_approx_NLSE} in our symplectic kernel approximations $\mathcal{K}_{2}(s;k,k_1,k_2,k_3)$.

\begin{lemma}\label{lem:local_error_estimates_NLSE} Let us denote by {$\tau\mapsto\phi_{\tau}(u(t_n))$} the solution to \eqref{eqn:Cauchy_problem_NLS} with initial condition $\phi_{0}(u(t_n))=u(t_n)$. Fix $R>0,s>1/2,\gamma\in[0,2]$, then there is a $\tau_R>0$ such that for all $\tau\in [0,\tau_R)$ we have
whenever $\sup_{t\in[0,\tau]}\|{\phi_{t}}(u(t_n))\|_{H^{s+\gamma}}<R$ then
\begin{align*}
		\|\phi_{\tau}(u(t_n))-\Phi_{\tau}(u(t_n))\|_{H^s}\leq c_R\tau^{1+\gamma} 
	\end{align*}
for some constant $c_R>0$ depending on $R>0,s,\gamma$.
\end{lemma}
\begin{proof}
We have
\begin{align*}
    \Phi_{\tau}(u(t_n))-&\Bigg(e^{i\tau\partial_x^2}u(t_n)\\
    &\quad-\left.i\mu e^{i\tau\partial_x^2}\int_{0}^{\tau}e^{-is{\partial_x^2}}\left|e^{is{\partial_x^2}}\frac{u(t_n)+e^{-i\tau{\partial_x^2}} \Phi_\tau(u(t_n))}{2}\right|^{2}e^{is{\partial_x^2}}\frac{u(t_n)+e^{-i\tau{\partial_x^2}} \Phi_\tau(u(t_n))}{2}\dd s\right)\\
    &\quad\quad\quad=-i\mu\sum_{k\in\mathbb{Z}}e^{ix k}\sum_{k+k_1=k_2+k_3}\left[\int_{0}^{\tau}e^{-2iskk_1+2isk_2k_3}-\mathcal{K}_2(s;k,k_1,k_2,k_3)\dd s\right]\overline{\hat{w}_{k_1}^n}\hat{w}_{k_2}^n\hat{w}_{k_3}^n,
\end{align*}
where for notational simplicity we wrote $w=\frac{u(t_n)+e^{-i\tau{\partial_x^2}} \Phi_\tau(u(t_n))}{2}$. By estimate \eqref{eqn:local_error_second_order_kernel_approx_NLSE} it therefore immediately follows that for any $s>1/2,\gamma\in[0,2]$
\begin{align*}
    &\Bigg\|\Phi_{\tau}(u(t_n))-\Bigg(e^{i\tau\partial_x^2}u(t_n)\\
    &\quad\quad\quad\quad\quad\quad\quad\left.\left.-i\mu e^{i\tau\partial_x^2}\int_{0}^{\tau}e^{-is{\partial_x^2}}\left|e^{is{\partial_x^2}}\frac{u(t_n)+e^{-i\tau{\partial_x^2}} \Phi_\tau(u(t_n))}{2}\right|^{2}e^{is{\partial_x^2}}\frac{u(t_n)+e^{-i\tau{\partial_x^2}} \Phi_\tau(u(t_n))}{2}\dd s\right)\right\|_{H^{s}}\\
    &\quad\quad\quad\quad\quad\quad\quad\quad\quad\quad\quad\quad\quad\quad\quad\quad\quad\quad\quad\quad\quad\quad\quad\quad\leq C\tau^{1+\gamma}\left\|\frac{u(t_n)+e^{-i\tau{\partial_x^2}} \Phi_\tau(u(t_n))}{2}\right\|_{H^{s+\gamma}}\\
    &\quad\quad\quad\quad\quad\quad\quad\quad\quad\quad\quad\quad\quad\quad\quad\quad\quad\quad\quad\quad\quad\quad\quad\quad\leq \frac{C}\tau^{1+\gamma}\left(\left\|u(t_n)\right\|_{H^{s+\gamma}}+\left\|\Phi_\tau(u(t_n))\right\|_{H^{s+\gamma}}\right)\\
\end{align*}
for a constant $C>0$ independent of $\tau$ and where in the final line we used Theorem~\ref{thm:sln_of_implicit_eqns_NLSE}. Thus it remains to consider the quantity
\begin{align*}
       &{\phi_{\tau}}(u(t_n))\\&-\left(e^{i\tau\partial_x^2}u(t_n)-i\mu e^{i\tau\partial_x^2}\int_{0}^{\tau}e^{-is{\partial_x^2}}\left|e^{is{\partial_x^2}}\frac{u(t_n)+e^{-i\tau{\partial_x^2}} \Phi_\tau(u(t_n))}{2}\right|^{2}e^{is{\partial_x^2}}\frac{u(t_n)+e^{-i\tau{\partial_x^2}} \Phi_\tau(u(t_n))}{2}\dd s\right)
  \end{align*}
  which can be estimated {in $H^s$} by taking similar steps to the proof of the local error estimates for the KdV equation (since now the integral kernel corresponds to the exact flow) in Lemmas~\ref{lem:local_error_estimate_H1_first_order}\&\ref{lem:local_error_estimate_H1_second_order}.
\end{proof}

\subsubsection{Global error estimate}
We can now prove Theorem~\ref{thm:global_error_midpoint_NLSE} analogously to the global error estimates in Section~\ref{sec:convergence_analysis_KdV} by combining Proposition~\ref{prop:stability_estimate_NLSE} and Lemma~\ref{lem:local_error_estimates_NLSE}.
\subsection{{Comments on the general case}}\label{sec:general_idea_convergence_analysis}
{Following the steps introduced at the beginning of Section~\ref{sec:convergence_analysis} we note that the above ideas generalise to the analysis of more general RK resonance-based schemes in the following way. Suppose we are given a RK resonance-based scheme of the form \eqref{eqn:RK_res_based_schemes_general}, then the first step in the above analysis is to establish the well-posedness of the implicit system
\begin{align}\label{eqn:Kequations_general_RK_scheme}
	K_{p,q,r}&=\mathcal{F}^{[\mathcal{L},\rho]}_p(\tau; c_q; u^{n}+\tau \sum_{\tilde{p},\tilde{q},\tilde{r}=0}^{S} a_{p,q,r}^{\tilde{p},\tilde{q},\tilde{r}}K_{\tilde{p},\tilde{q},\tilde{r}}).
\end{align}
Of course, if this system is explicit we can move to step (ii) directly, but if not, in the first instance we aim to show that estimates analogous to Theorems~\ref{thm:fixed_point_iterations_v} \& \ref{thm:sln_of_implicit_eqns_NLSE} holds. This means we need to show that, for $\tau$ sufficiently small, the implicit equations \eqref{eqn:Kequations_general_RK_scheme} can be solved using fixed point iterations and that estimates of the form
\begin{align}\begin{split}\label{eqn:proximity_time_step_general}
    \left\|K_{p,q,r}-\mathcal{F}^{[\mathcal{L},\rho]}_p(\tau; c_q; u^{n})\right\|_{H^s}\leq \tau C_1,\quad \left\|u^{n+1}-e^{i\tau\mathcal{L}(\nabla)}u^n\right\|_{H^s}\leq \tau C_1,
    \end{split}
\end{align}
hold for some $s\geq0$, all $\tau>0$ sufficiently small and for some $C_1>0$ which may depend on $\|u^n\|_{H^s}$. (ii) Once this estimate is established we denote the RK resonance-based scheme by $\Phi_{\tau}^{[\mathcal{L},\rho]}$ and use \eqref{eqn:proximity_time_step_general} to obtain a stability estimate, similar to Lemma~\ref{lem:stability_in_H1} and Proposition~\ref{prop:stability_estimate_NLSE}, which takes the form
\begin{align*}
    \left\|\Phi_{\tau}^{[\mathcal{L},\rho]}(u)-\Phi_{\tau}^{[\mathcal{L},\rho]}(w)\right\|_{H^s}\leq e^{\tau C_2}\left\|u-w\right\|_{H^s},
\end{align*}
where $C_2>0$ may depend on the size of $u,w$ in an appropriate norm. This estimate exploits the properties of the maps $\mathcal{F}^{[\mathcal{L},\rho]}_p$ (introduced in \eqref{eqn:nonlinear_maps_in_general_RK_schemes}) which, for the KdV equation and the kernel approximations shown in this manuscript for the NLSE case, are all unconditionally stable in appropriate Sobolev norms. (iii) Finally, we study the local error of the scheme establishing bounds similar to Lemmas~\ref{lem:local_error_estimate_H1_first_order} \& \ref{lem:local_error_estimates_NLSE}. Generally speaking this step is easier to perform on the twisted variable noting that the change of variable $v(t)=\exp(-it\mathcal{L}(\nabla))u(t)$ is an isometry on $H^s$. The idea for this step is to find and compare two expansions: Firstly, an expansion of the exact flow of the twisted variable $\phi_{t_n,\tau}$ which is obtained by suitable iteration of Duhamel's formula \eqref{eqn:Duhamel_forumla_general_dispersive} and, secondly, an expansion of the numerical solution by suitable repeated substitution of the implicit RK stage values into the expression of the solution and the use of \eqref{eqn:proximity_time_step_general} to replace the final implicit term. For example a first iteration of this form in \eqref{eqn:RK_res_based_schemes_general} may take the form
\begin{align*}
	u^{n+1}&=e^{i\tau \mathcal{L}(\nabla)}u^{n}+\tau\sum_{p,q,r=0}^{S} b^{p,q,r}e^{i\tau\mathcal{L}(\nabla)}\mathcal{F}^{[\mathcal{L},\rho]}_p(\tau; c_q; u^{n})+\mathcal{O}(\tau^2),
\end{align*}
where the inherent constant in $\mathcal{O}(\tau^2)$ can be bounded in terms of an appropriate Sobolev norm of $u^{n}$. These two series are then compared term by term, like in the analysis of classical B-series methods, to establish the order of the method using the estimate on the kernel approximation of the form \eqref{eqn:kernel_approximation_general_dispersive_eqn}. The important point to keep in mind in this dispersive setting however is to estimate the difference in these series inside the integrals appearing in Duhamel's formula and \eqref{eqn:nonlinear_maps_in_general_RK_schemes}, similarly to the analysis performed in the proof of Lemmas~\ref{lem:local_error_estimate_H1_first_order} \& \ref{lem:local_error_estimates_NLSE} to ensure that the local error estimates are valid for solutions of low regularity. These series expansions can in principle be done by hand but it is subject of ongoing work to use ideas introduced by \cite{luan2013exponential} for the derivation of stiff order conditions in exponential integrators (using so-called exponential B-series) to perform the local error analysis of RK resonance-based schemes in a more structured manner.}
\section{Numerical experiments}
\label{sec:numerical_experiments}
In this section we test {the new} symplectic resonance-based schemes \eqref{eqn:resonance_based_midpoint_rule_u_KdV} \& \eqref{eqn:resonance_based_midpoint_rule_u_NLSE} numerically, and   exhibit their favourable properties in practice. {In particular, we will study the practical behaviour of our methods in terms of
\begin{itemize}
    \item convergence in low-regularity regimes;
    \item preservation of the quadratic first integrals \eqref{eqn:quadratic_first_integral_kdv} \& \eqref{eqn:quadratic_first_integral_nlse}, which we expect to be preserved close to machine accuracy in our new schemes;
    \item approximate preservation of the energy.
\end{itemize}
The final property requires more subtle study beyond the scope of this work in that, even though in the ODE case symplectic integrators are known to preserve the energy approximately over long times \cite{hairer2013geometric}, the current understanding of the PDE case is much more limited (important contributions were made for example in \cite{faou2004energy,faou2012geometric,faougaucklerlubich2014,gauckler2010splitting}) and in particular there is no general analogue of the ODE result which rigorously guarantees good long-time behaviour of symplectic integrators for PDEs. Nevertheless, we show in some initial numerical experiments that for a large subset of non-resonant frequencies the energy appears to be preserved approximately over long times using the resonance-based midpoint rule. We note that this preservation can be observed to fail at resonant time steps similar to results for splitting methods \cite{faou2012geometric} and we therefore do not make any definite claims about the energy preservation properties of our new methods.}

In all of the experiments our spatial discretisation is a spectral method with $M$ Fourier modes and our initial conditions are of two types: Firstly, smooth initial conditions $u_0\in C^\infty(\mathbb{T})$ of the form
\begin{align}\label{eqn:def_smooth_initial_data_numerics}
    u_0(x)=\frac{\cos(x)}{2+\sin (x)},
\end{align}
with an appropriate normalisation $u_0\mapsto u_0/\|u_0\|_{L^{2}}$ and, in the KdV case, centering $u_0\mapsto u_0-\int_{\mathbb{T}}u_0\dd x$ to satisfy assumption~\ref{assumption:zero_mass}. Secondly, rough initial conditions $u_0\in H^\theta,\theta>1/2$ of the form
\begin{align}\label{eqn:law_for_random_initial_conditions}
\hat{u}_{(M),m}^0=\langle m\rangle^{-\vartheta}U_{M,m},
\end{align}
where the rescaled Fourier coefficients of our initial condition, $\mathbf{U}_M$, are chosen as a single sample {(using} \verb|rand(M,1)+i*rand(M,1)| {in Matlab)} of a uniform random distribution
\begin{align*}
\mathbf{U}_M\sim \mathcal{U}([-1,1]^M+i[-1,1]^M),
\end{align*}
with appropriate normalisation and centering as in the smooth case. {The code associated with this work is available on the GitHub repository} \verb|GLIMPSE| {\cite{GLIMPSE23}}.

\subsection{NLSE}

We begin our discussion with the cubic NLSE, $p=1$ in \eqref{eqn:Cauchy_problem_NLS}, because for this equation very competitive reference methods exist in the literature. In particular, we will evaluate the performance of the resonance-based midpoint rule as an example of a second order symplectic low-regularity integrator in the class \eqref{eqn:RK_res_based_schemes_NLSE}, against the following methods:
\begin{itemize}
    \item the second order low-regularity integrator introduced by \cite[Section~5.1.2]{bruned_schratz_2022}, denoted by `Bruned \& Schratz';
    \item the classical Strang splitting, which is a symplectic scheme for the NLSE (cf. \cite{mclachlan2002splitting}), denoted by `Strang';
    \item the second order $L^2$-norm preserving Lawson method introduced in \cite[Example~3.2]{celledoni2008symmetric}, denoted by `Lawson'.
\end{itemize}

In all of the numerical experiments reference solutions were computed with `Bruned \& Schratz 2022' with a reference time-step of $\tau_{ref}=10^{-6}$ and {a spatial discretisation with $M=2^{14}$ Fourier modes.}

\subsubsection{Convergence rates}

We begin by considering the convergence properties of our new scheme in comparison to previous work. For this we choose initial data of three different levels of regularity $u_0\in H^{2},u_0\in H^{3}$ with $\vartheta=2,3$ in \eqref{eqn:law_for_random_initial_conditions} and $u_0\in C^\infty$ with \eqref{eqn:def_smooth_initial_data_numerics} and measure the $H^1$ error at time $T=1$ for a range of time steps $\tau$. In all of the following numerical experiments we took $M=1024$ Fourier modes and the results are shown in Figures~\ref{fig:convergence_properties_NLSE}\&\ref{fig:time_step_vs_error_Cinf_data_NLSE}. We clearly see that the predicted convergence rates of order one and two are achieved at those levels of regularity as per Theorem~\ref{thm:global_error_midpoint_NLSE}. Even though the error constant of `Bruned and Schratz 2022' is slightly smaller than our new resonance-based midpoint rule, the latter converges at the same rates and is in particular able to clearly outperform the Strang splitting and the $L^2$-norm preserving Lawson method `Lawson' for low-regularity solutions.

\begin{figure}[h!]
\centering
	\begin{subfigure}{0.495\textwidth}
		\centering
		\includegraphics[width=0.97\textwidth]{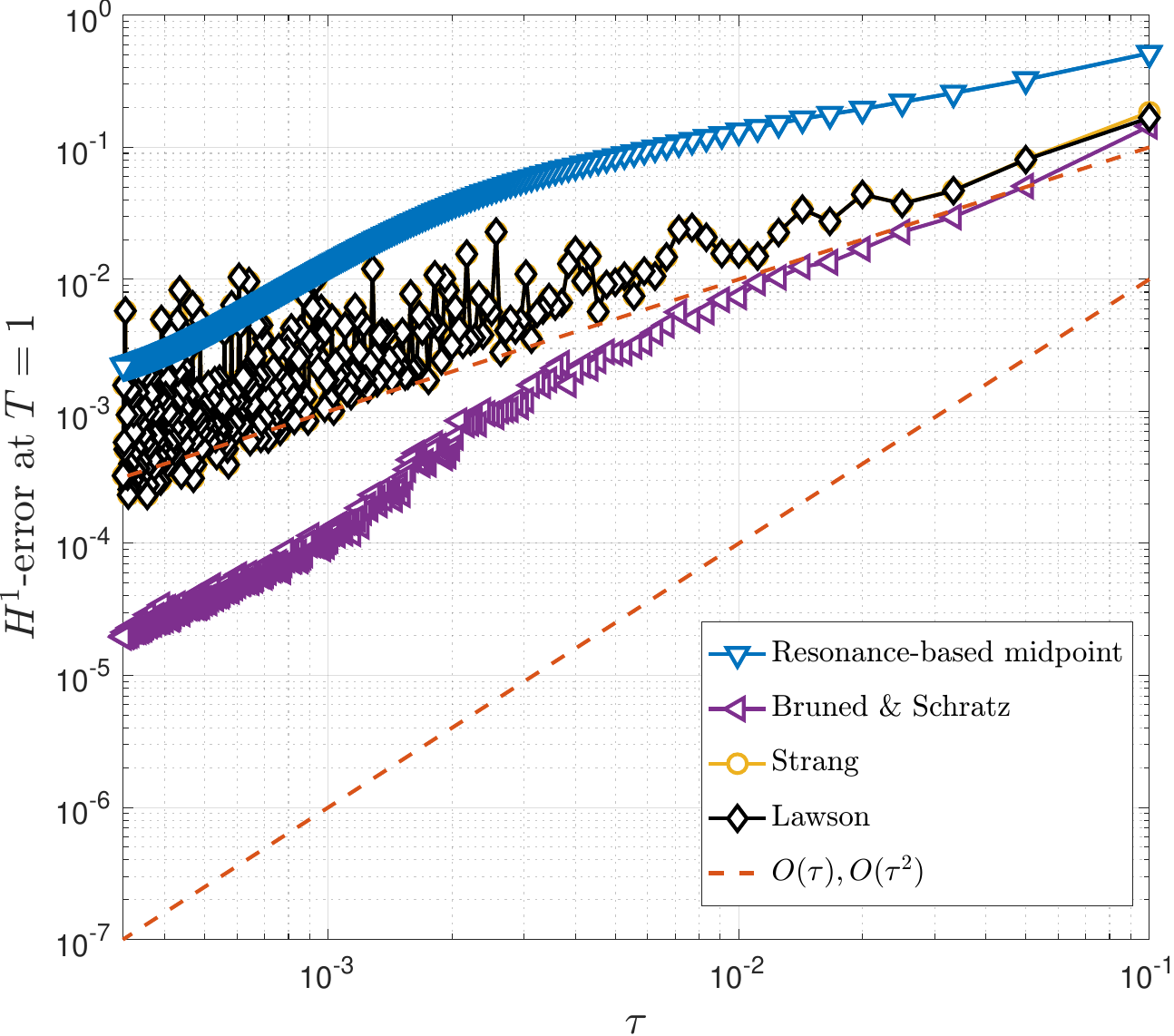}
  \caption{Initial data $u_0\in H^2$, as per \eqref{eqn:law_for_random_initial_conditions} with $\vartheta=2$.}
		\label{fig:time_step_vs_error_H2_data_NLSE}
	\end{subfigure}
	\begin{subfigure}{0.495\textwidth}
		\centering
		\includegraphics[width=0.97\textwidth]{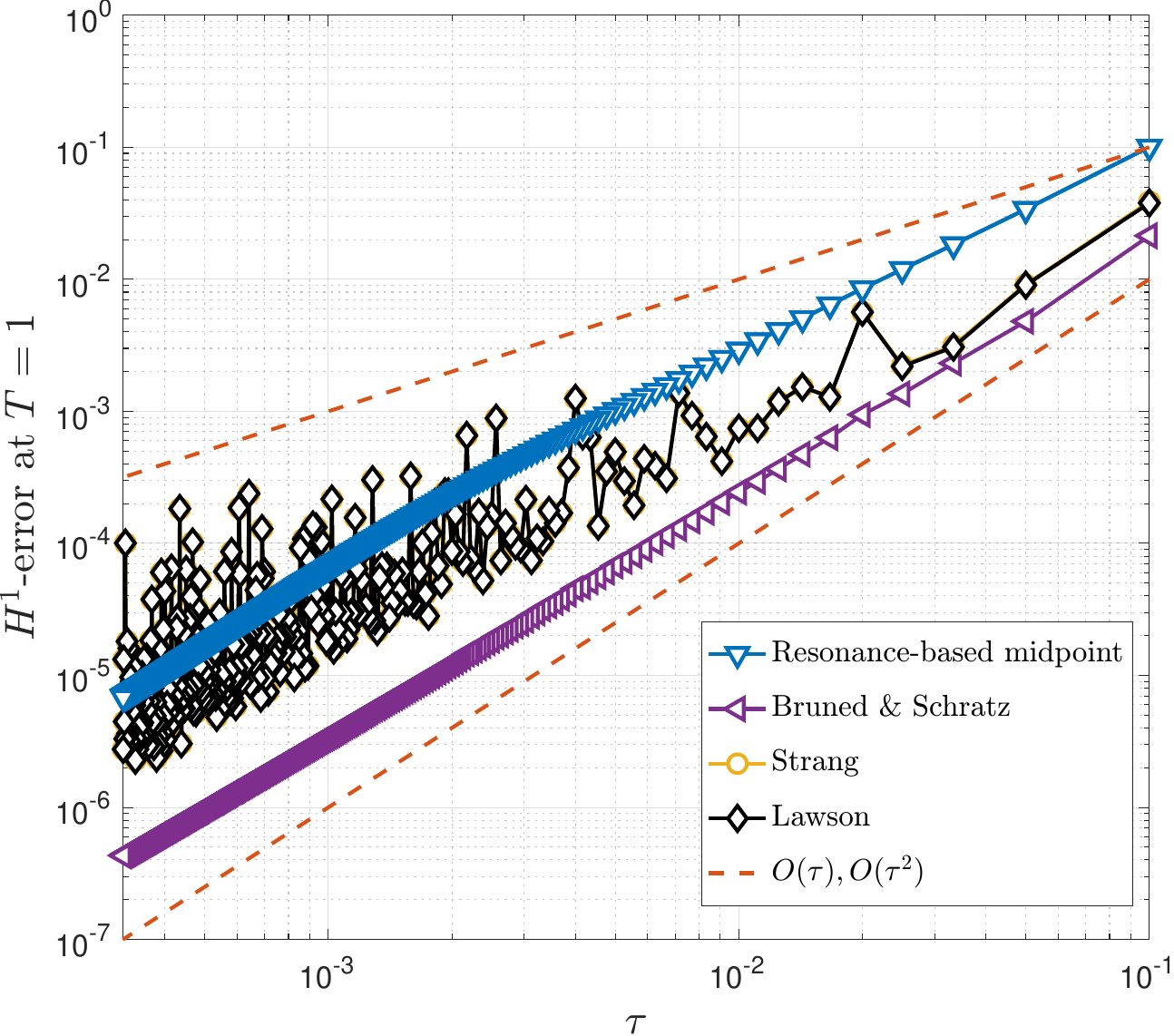}
  \caption{Initial data $u_0\in H^3$, as per \eqref{eqn:law_for_random_initial_conditions} with $\vartheta=3$.}
	\label{fig:time_step_vs_error_H3_data_NLSE}
	\end{subfigure}
	\caption{Order plot measured in $H^1$.}
	\label{fig:convergence_properties_NLSE}
\end{figure}
\begin{figure}[h!]
		\centering
		\includegraphics[width=0.5\textwidth]{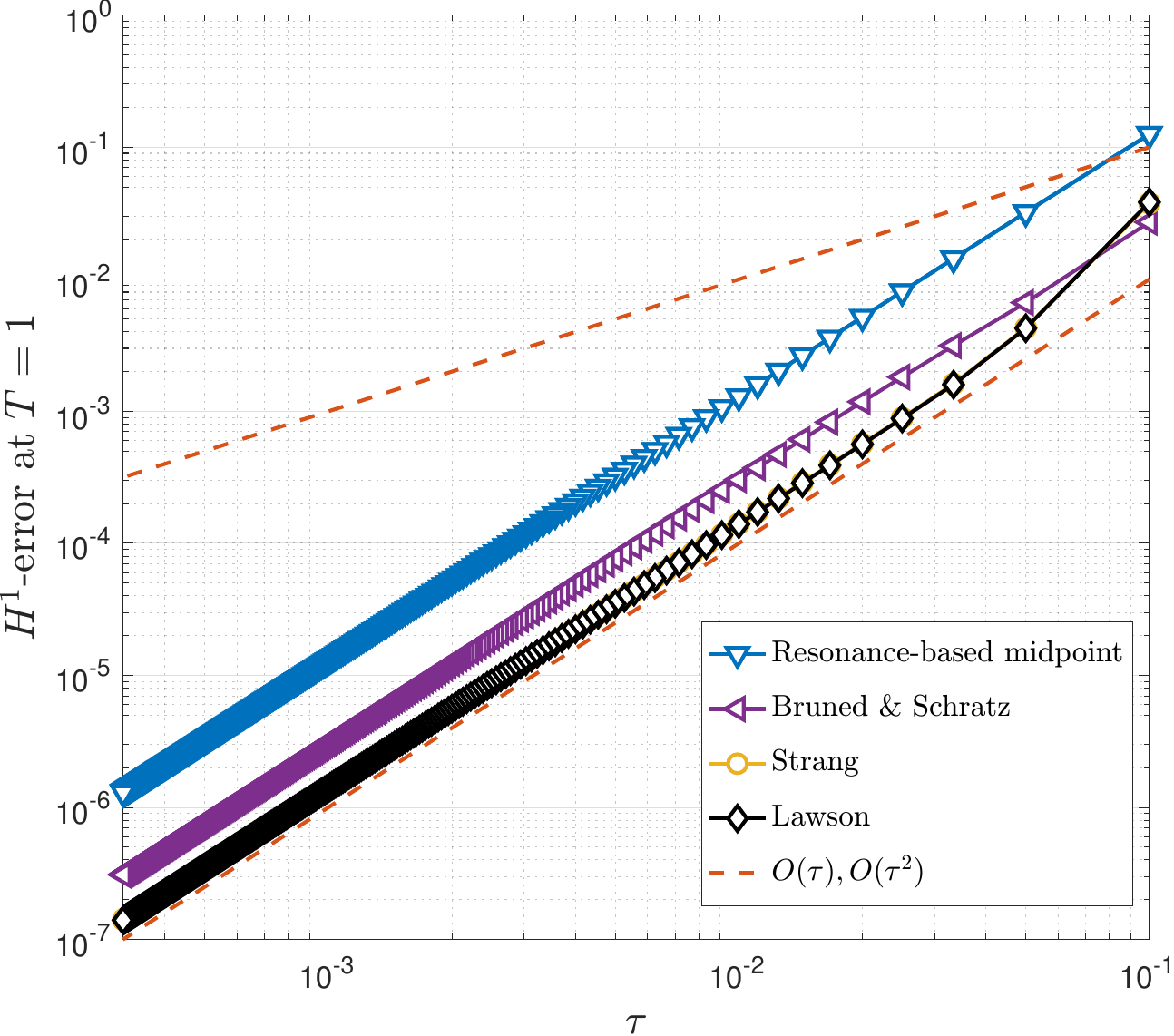}
		\caption{Order plot measured in $H^1$, initial data $u_0\in C^\infty$, as per \eqref{eqn:def_smooth_initial_data_numerics}.}
		\label{fig:time_step_vs_error_Cinf_data_NLSE}
\end{figure}

\ \\
\vspace{-1.5cm}\ \\
\subsubsection{Structure preservation properties}
In the next instance we shall look at how well our proposed method is able to preserve conservation laws from the NLSE. By Theorem~\ref{thm:L2_pres_condition_direct_flow} we expect the resonance-based midpoint rule to preserve the $L^2$-norm \eqref{eqn:quadratic_first_integral_nlse} of the solution to machine accuracy - and the same is expected for both `Strang' and `Lawson'. We can observe this expected behaviour both for low-regularity solutions, in Figure~\ref{fig:nlseL2preservationlowregularity}, and for smooth solutions, in Figure~\ref{fig:nlseL2preservationsmooth}. For both experiments we took $M=2048$ and $\tau=0.02$. From numerical experiments it is apparent that the existing low-regularity integrator `Bruned \& Schratz' \cite{bruned_schratz_2022} exhibits a clear drift in the error of the $L^2$-norm and is therefore unable to preserve this first integral over long times, thus justifying our novel constructions.

\begin{figure}[h!]
 \centering
     \includegraphics[width=0.91\linewidth]{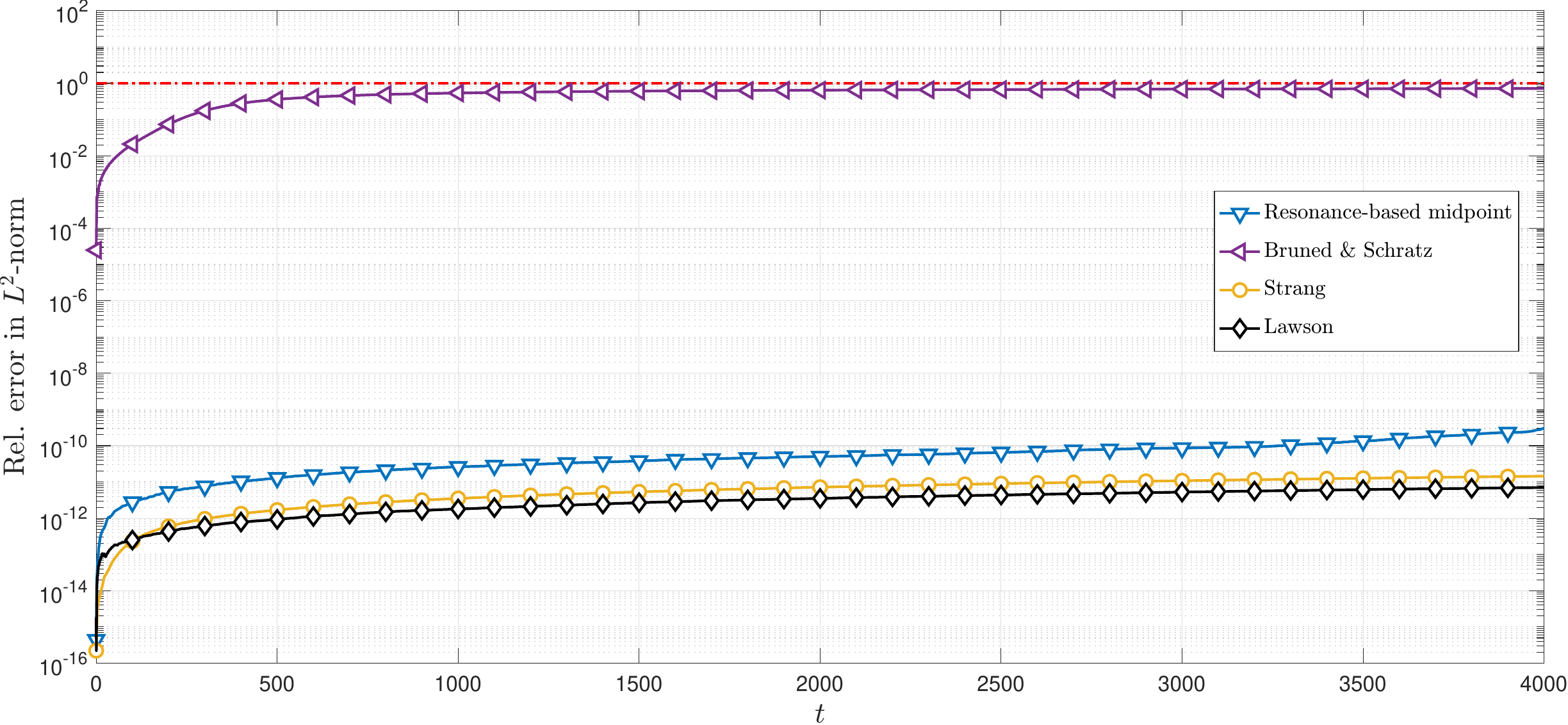}
	\caption{Relative error in the $L^2$-norm with $u_0\in H^2$, as per \eqref{eqn:law_for_random_initial_conditions} with $\vartheta=2$.}
	\label{fig:nlseL2preservationlowregularity}
 \end{figure}
\begin{figure}[h!]
	\centering
	\includegraphics[width=0.9\linewidth]{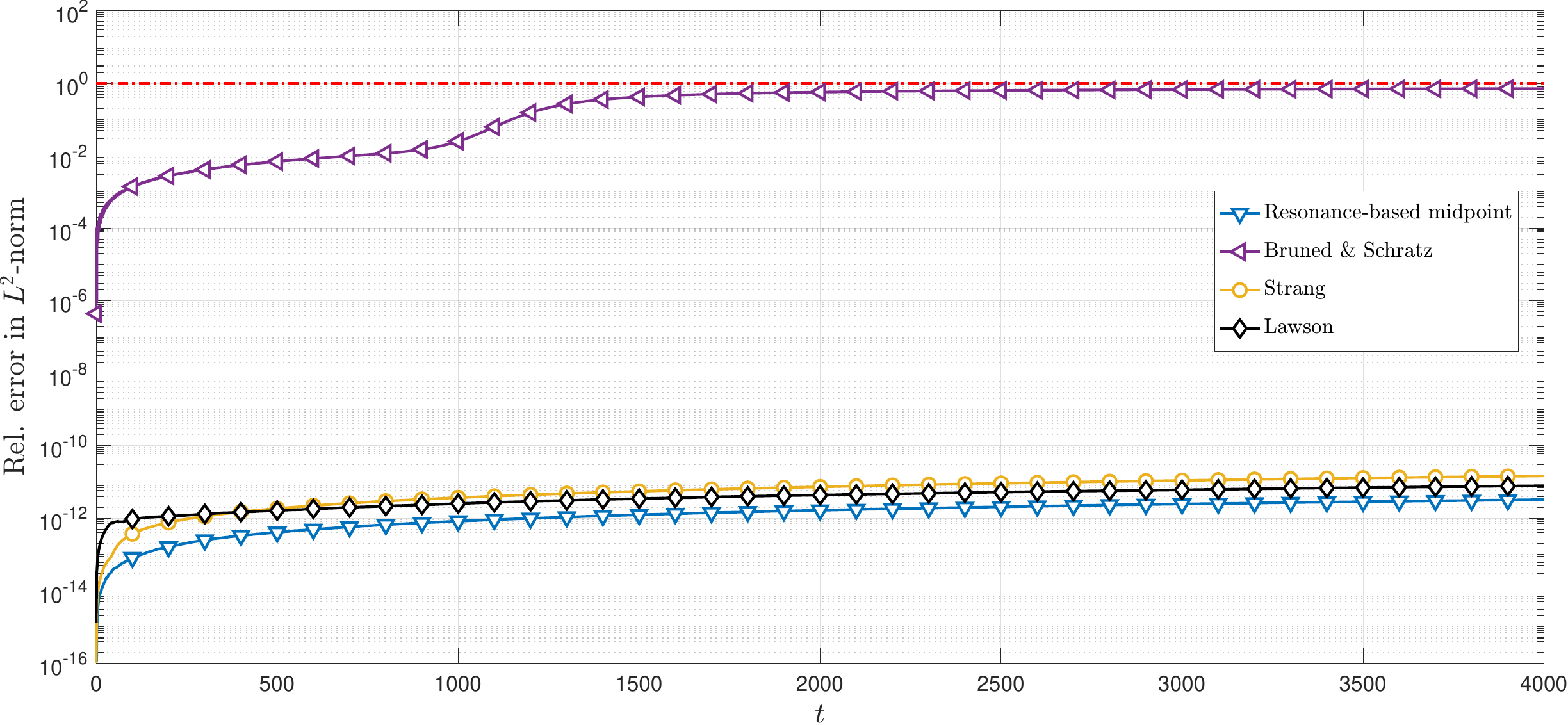}
	\caption{Relative error in the $L^2$-norm with $u_0\in C^\infty$, as per \eqref{eqn:def_smooth_initial_data_numerics}.}
	\label{fig:nlseL2preservationsmooth}
 \end{figure}

Finally, we can consider the long-time error in the Hamiltonian 
\begin{align*}
    \mathcal{H}^{[NLSE]}(u)=\int_{\mathbb{T}}|\partial_x u|^2+\frac{\mu}{2}|u|^4\dd x.
\end{align*}

{There is no theoretical guarantee for the Hamiltonian to be preserved over long times in our scheme.} However, for ODEs it is known that symplectic integrators are able to approximately preserve the Hamiltonian over exponentially long times \cite[Theorem~IX.8.1]{hairer2013geometric}. A comparable long-time preservation of the Hamiltonian was shown for splitting methods at non-resonant time steps and subject to a CFL condition of the form $\tau\lesssim M^{-2}$ in \cite{faou2012geometric}. In our numerical experiments we observe that our symplectic integrator appears to be able to achieve a similar feat: at a large number of tested time steps the energy is approximately preserved over very long time intervals. We suspect that this energy conservation may break down at isolated resonant time steps of similar nature to those found in \cite{faou2012geometric} but the analysis of this long-time behaviour is subject of future work.

\begin{figure}[h!]
	\centering
 \begin{subfigure}{1\textwidth}
 \centering
	\includegraphics[width=0.9\linewidth]{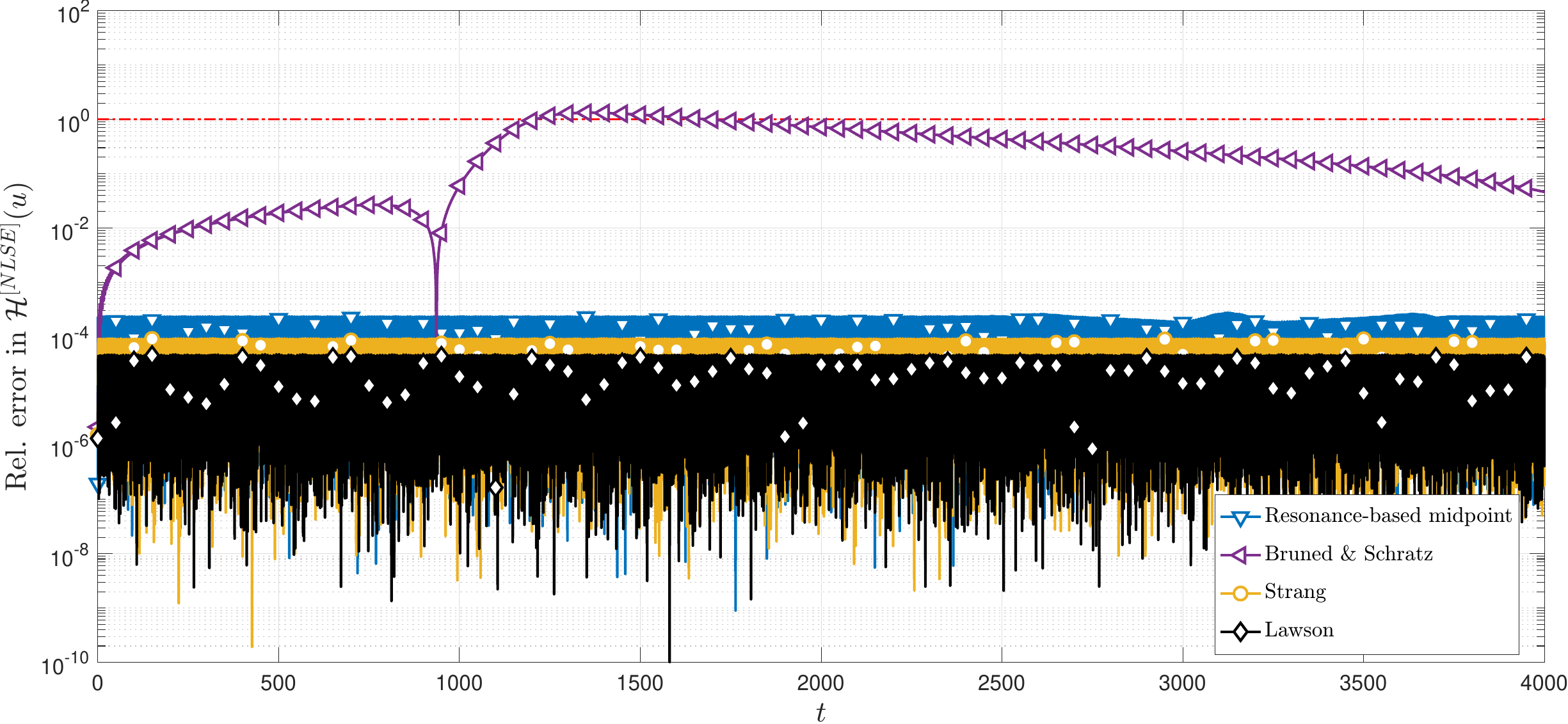}
	\caption{$M=64$.}
	\label{fig:nlse_hamiltonianpreservationsmooth_Msmall}
 \end{subfigure}
 \begin{subfigure}{1\textwidth}
 \centering
	\centering
	\includegraphics[width=0.9\linewidth]{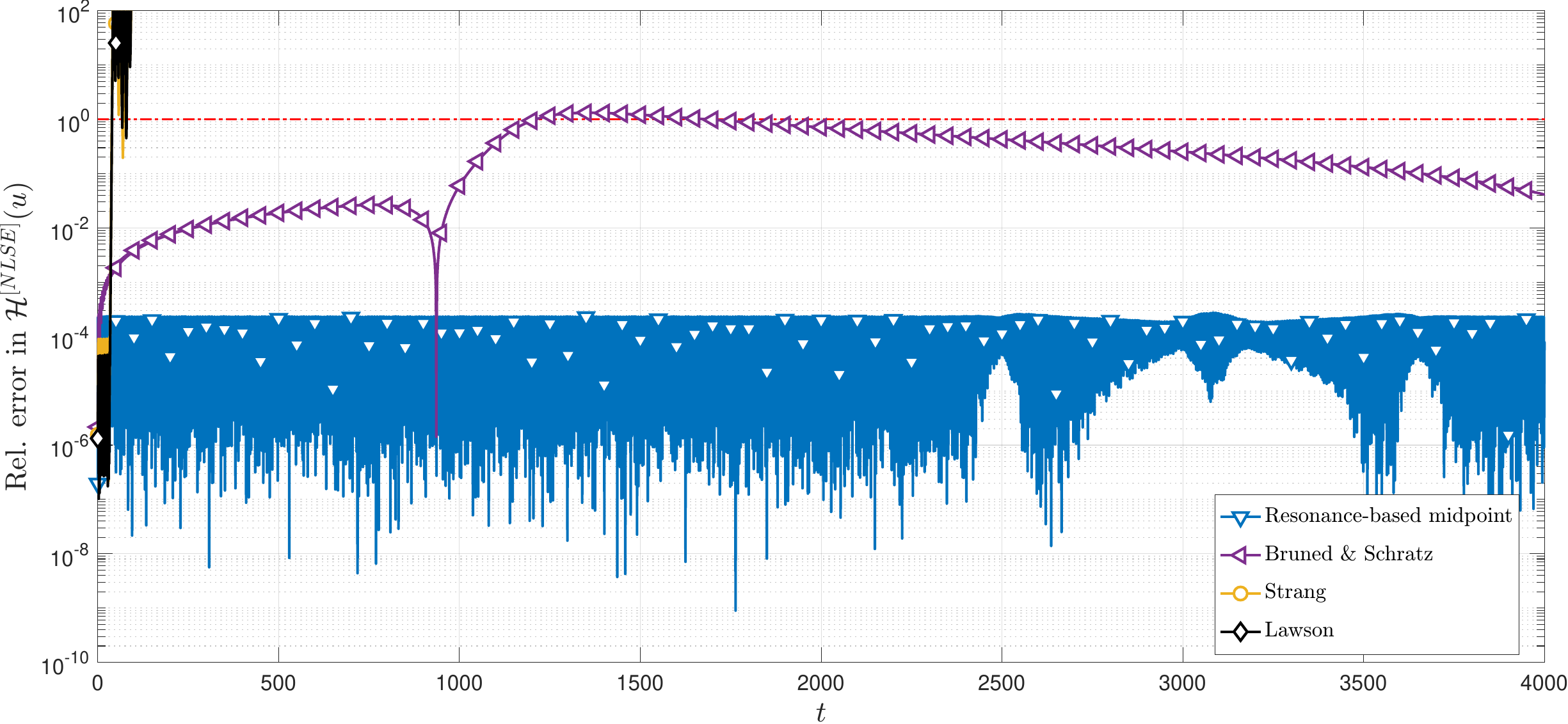}
 \caption{$M=256$.}
 \label{fig:nlse_hamiltonianpreservationsmooth_Mlarge}
 \end{subfigure}
	\caption{Error in the Hamiltonian of the numerical solution, for $u_0\in C^\infty(\mathbb{T})$ as per \eqref{eqn:def_smooth_initial_data_numerics}.}
	\label{fig:nlse_hamiltonianpreservationsmooth}
\end{figure}
In Figure~\ref{fig:nlse_hamiltonianpreservationsmooth} we exhibit the long-time behaviour for $u_0\in C^{\infty}$ for a specific choice of $\tau=0.02$ which is representative of the behaviour we observed at non-resonant time steps. It turns out that for $\tau$ sufficiently small (cf. Figure~\ref{fig:nlse_hamiltonianpreservationsmooth_Msmall}{)}, the Strang splitting and Lawson method, are able to preserve the Hamiltonian approximately over very long times - in keeping with theoretical expectations from the ODE setting. Equally our symplectic resonance-based midpoint rule can preserve the Hamiltonian over comparable times in the same regime. As expected, for larger $M$ (and same $\tau=0.02$) the CFL condition required for long-time energy preservation is no longer satisfied for the Strang splitting and thus the long-time behaviour breaks down, cf. Figure~\ref{fig:nlse_hamiltonianpreservationsmooth_Mlarge}. Perhaps somewhat surprising is that our new method appears to be able to continue to preserve the energy well even for larger values of $M$, which suggests that even in the smooth case our new method might be able to compete with prior work on approximate energy preservation, and in some cases even outperform previous state-of-the-art in this sense.

Finally, we note that similarly favourable long-time approximate energy preservation is observed for our method even for rough initial data. Figure~\ref{fig:nlse_hamiltonianpreservationrough} indicates that our method is able to approximately preserve the energy over long times even in low-regularity regimes where the energy conservation of the reference methods breaks down completely.

\begin{figure}[h!]
	\centering
	\begin{subfigure}{1\textwidth}
 \centering
	\includegraphics[width=0.9\linewidth]{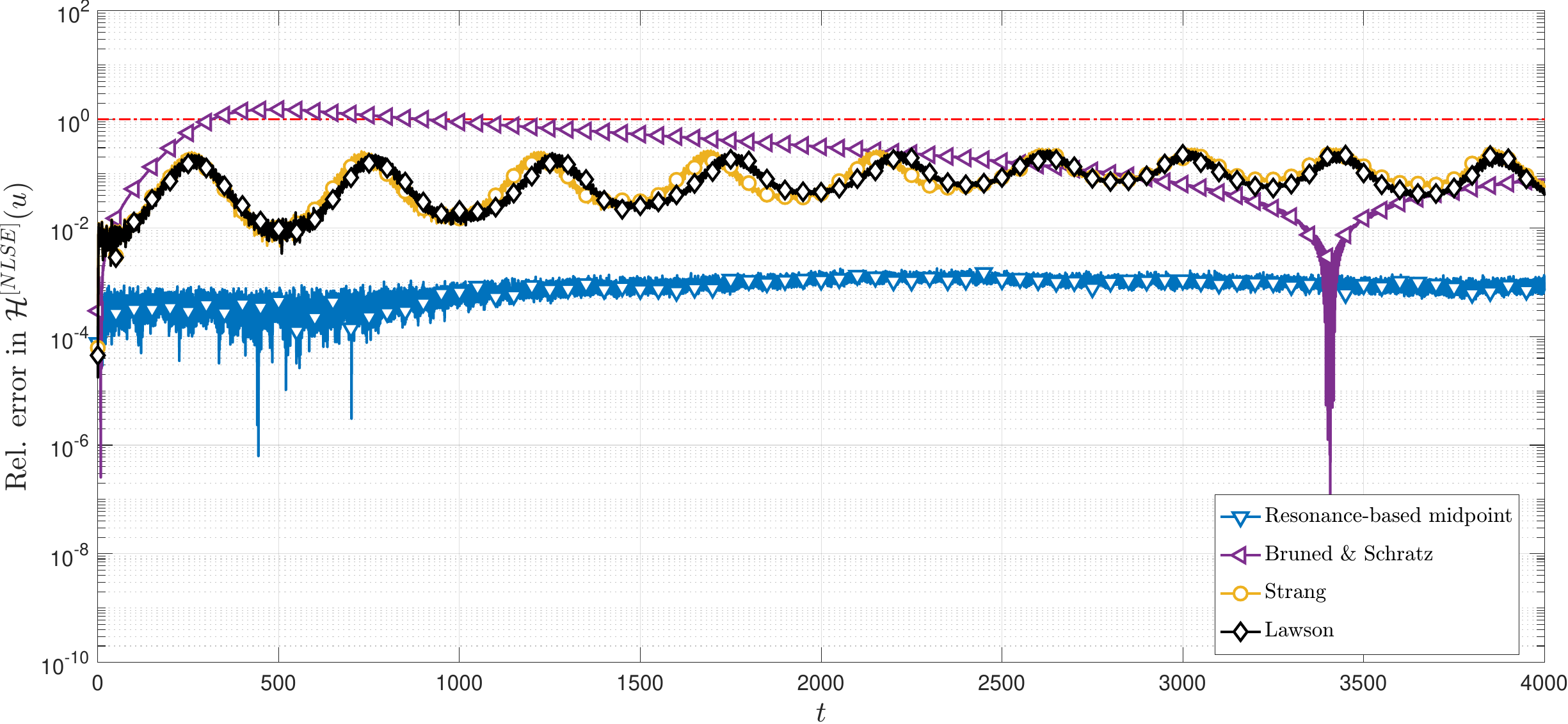}
	\caption{$M=64$.}
 \end{subfigure}
 \begin{subfigure}{1\textwidth}
 \centering
	\centering
	\includegraphics[width=0.9\linewidth]{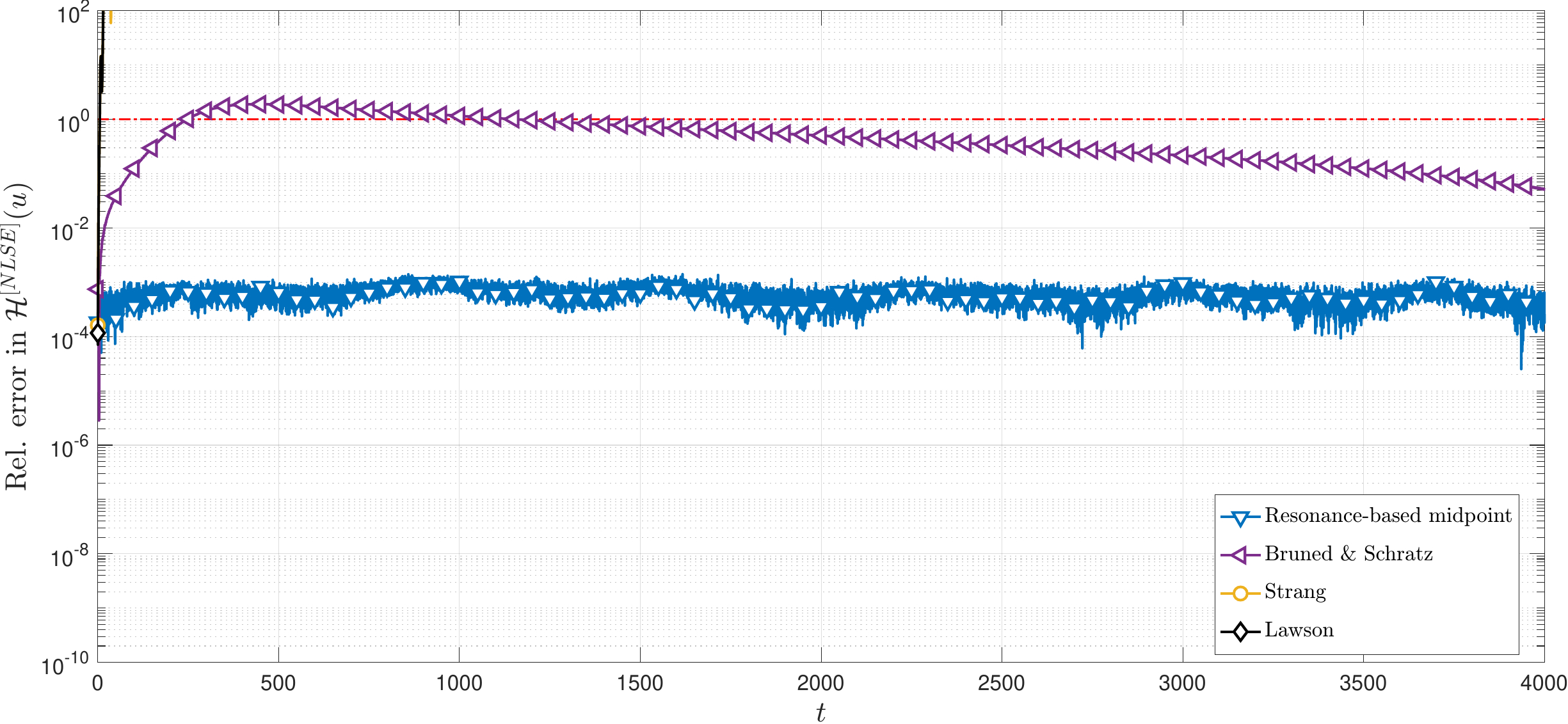}
 \caption{$M=256$.}
 \end{subfigure}
	\caption{Error in the Hamiltonian of the numerical solution, for $u_0\in H^2$, as per \eqref{eqn:law_for_random_initial_conditions} with $\vartheta=2$.}
	\label{fig:nlse_hamiltonianpreservationrough}
\end{figure}
\newpage

\subsection{KdV equation}

We will now perform similar numerical studies for the KdV case. We note that here a fair comparison with the literature is slightly more challenging, because existent structure preserving algorithms are not quite as competitive as for the NLSE. Nevertheless we will evaluate our resonance-based midpoint rule \eqref{eqn:resonance_based_midpoint_rule_u_KdV} against the following reference methods:
\begin{itemize}
    \item the second order low-regularity integrator introduced by \cite[Section~5.2]{bruned_schratz_2022}, denoted by `Bruned \& Schratz';
    \item the classical Strang splitting, which is a symplectic scheme for the KdV (cf. \cite{holden2011operator,holden2013operator}), denoted by `Strang'. Here we assume that the Burger's type nonlinearity is solved exactly (in our case with an auxiliary Runge--Kutta scheme of time step $\tau10^{-4}$) which is in itself a very expensive process and raises questions of the practical suitability of the Strang splitting in the KdV case;
    \item the second order momentum-preserving Lawson method introduced in \cite[Example~3.2]{celledoni2008symmetric}, denoted by `Lawson', which we adapted to the KdV nonlinearity. We note the method was originally designed and studied for the NLSE, but in its functional form can be adapted easily to the KdV equation. {Our reason for comparison against this method is that it indeed provides one of the most competitive structure preserving algorithms for the KdV equation currently available.}
\end{itemize}

In all of the numerical experiments reference solutions were again computed with `Bruned \& Schratz' with a reference time-step of $\tau_{ref}=10^{-6}$ and $M=2^{14}$ Fourier modes.

\subsubsection{Convergence properties}\label{sec:convergence_properties}

\begin{figure}[h!]
\centering
	\begin{subfigure}{0.495\textwidth}
		\centering
		\includegraphics[width=0.92\textwidth]{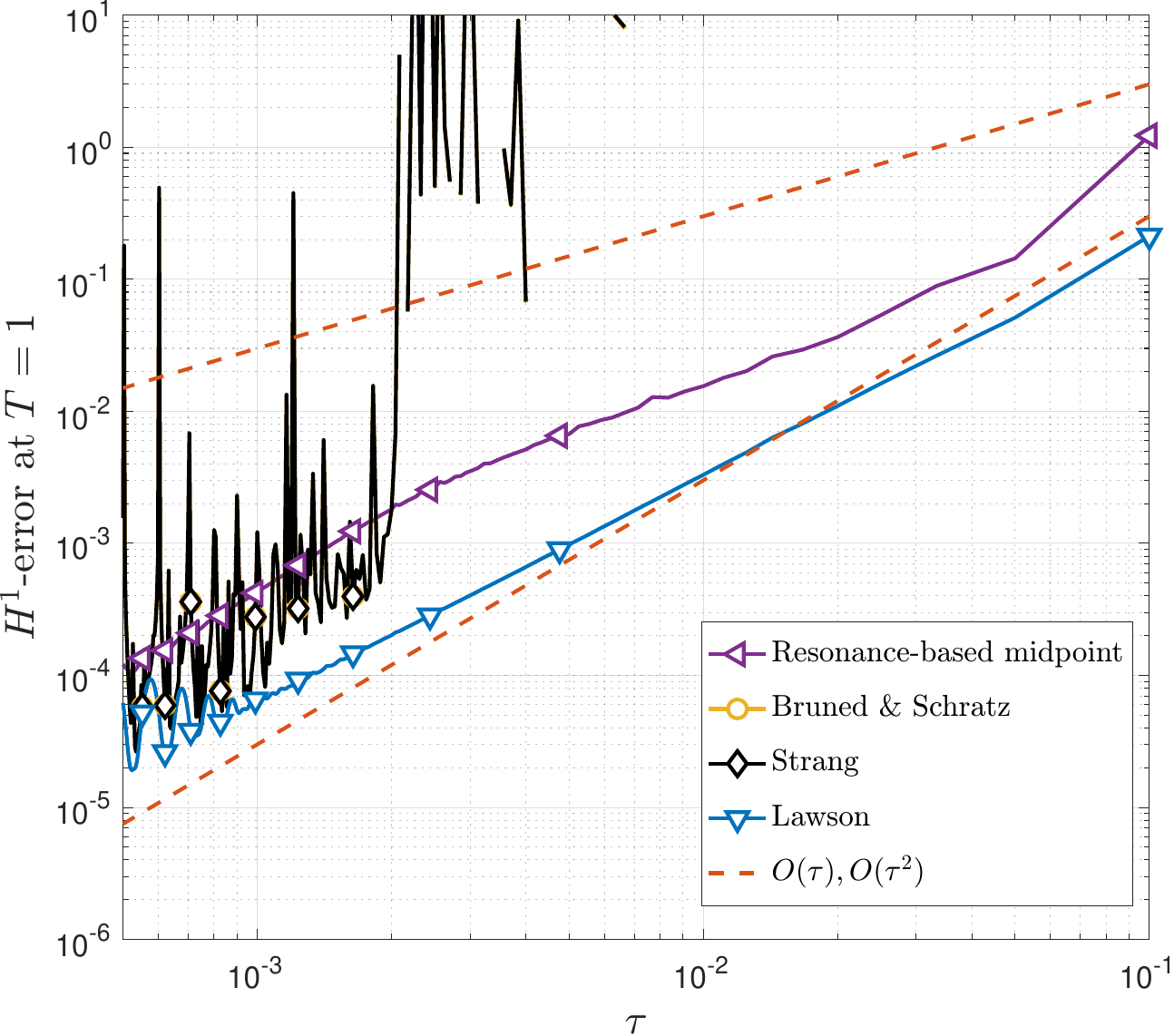}
  \caption{Initial data $u_0\in H^3$, as per \eqref{eqn:law_for_random_initial_conditions} with $\vartheta=3$.}
		\label{fig:time_step_vs_error_H2_data_kdv}
	\end{subfigure}
	\begin{subfigure}{0.495\textwidth}
		\centering
		\includegraphics[width=0.92\textwidth]{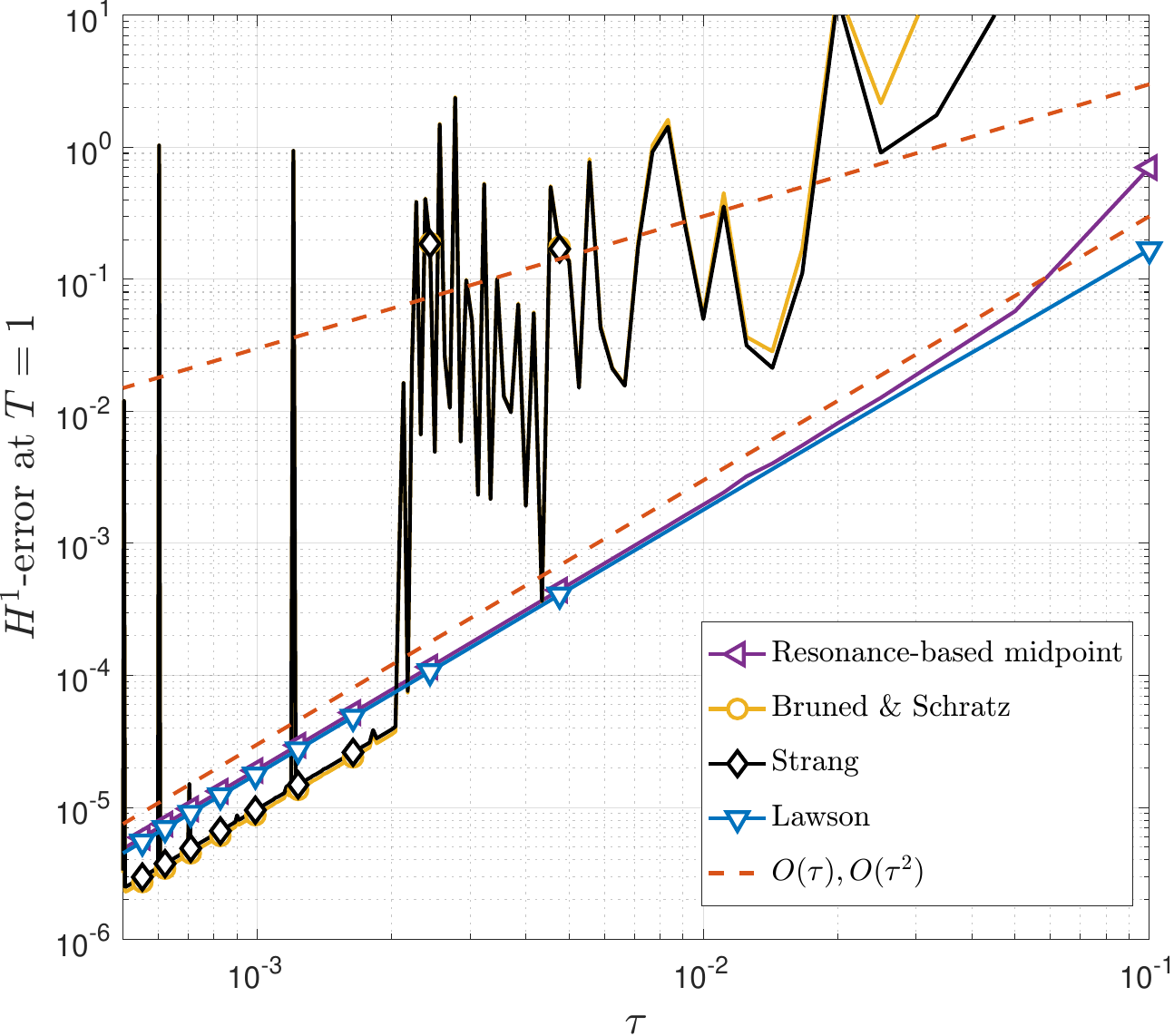}
  \caption{Initial data $u_0\in H^5$, as per \eqref{eqn:law_for_random_initial_conditions} with $\vartheta=5$.}
		\label{fig:time_step_vs_error_H2_data_kdv}
	\end{subfigure}\\
	\begin{subfigure}{0.495\textwidth}
		\centering
		\includegraphics[width=0.92\textwidth]{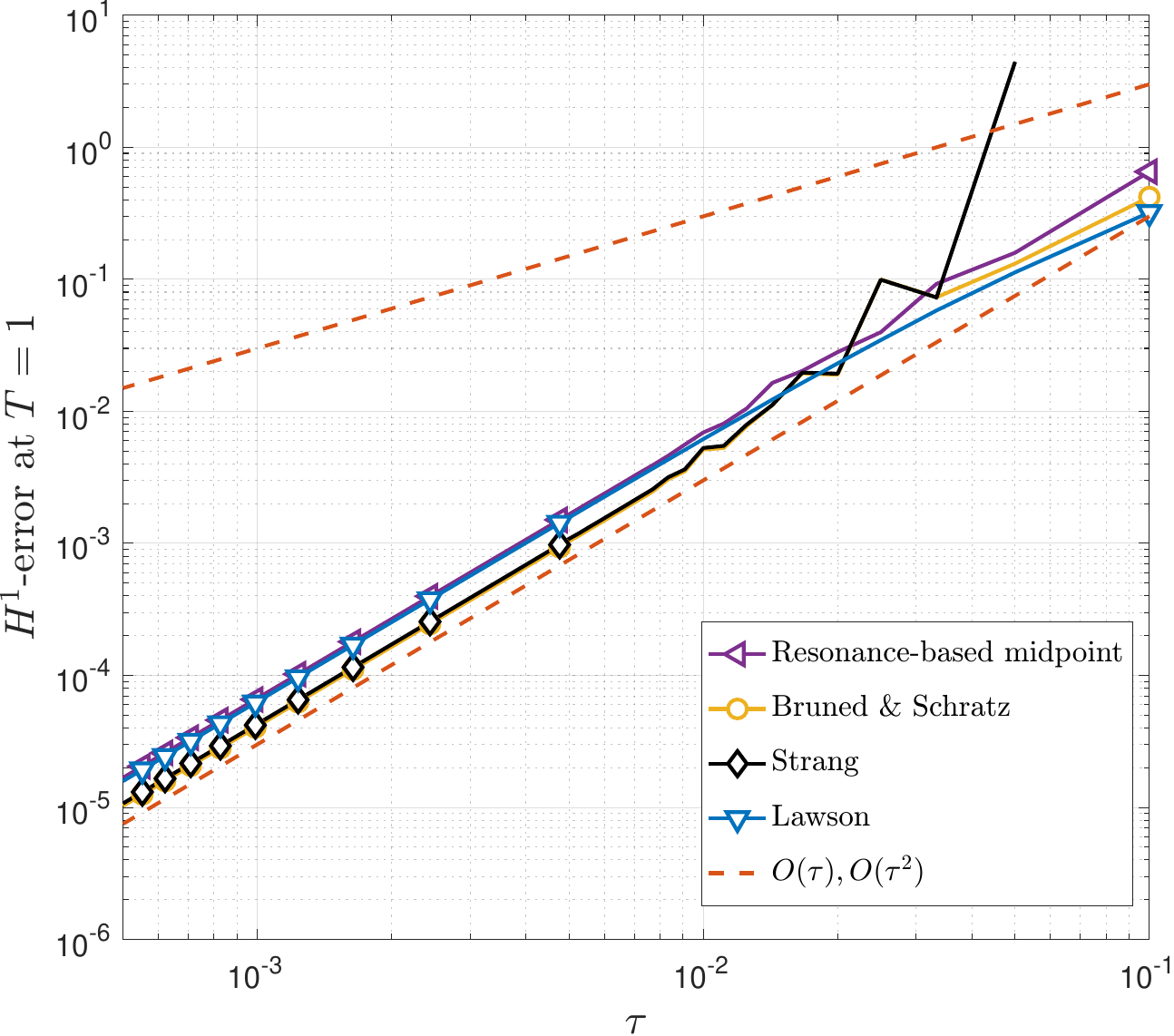}
		\caption{Initial data $u_0\in C^\infty$, as per \eqref{eqn:def_smooth_initial_data_numerics}.}
		\label{fig:time_step_vs_error_H2_data_kdv}
	\end{subfigure}
	\caption{Order plot measured in $H^1$, $M=64$.}
	\label{fig:convergence_properties_kdv}
\end{figure}

To begin with let us note that in practical experiments we found that Strang splitting and the aforementiond Lawson method both suffer from a CFL condition of the form $\tau\lesssim M^{-1}$, required to ensure stability and convergence of the methods. To provide a fair comparison we thus start our numerical discussion with a spectral discretisation with a relatively small number of Fourier modes, $M=64$, which can be seen in Figure~\ref{fig:convergence_properties_kdv}. Here we plot the convergence graphs of the methods for solutions of various levels of regularity, $u_0\in H^3,u_0\in H^5$ and $u_0\in C^\infty$ according to \eqref{eqn:law_for_random_initial_conditions} \& \eqref{eqn:def_smooth_initial_data_numerics} but rescaled such that $\|u_0\|_{L^2}=0.1$. This rescaling manifests itself essentially just as a rescaling of the nonlinearity which allows us to look at structure preservation properties over times comparable to the NLSE in Section~\ref{sec:numerical_experiments_structure_preservation_kdv}. Had we chosen the scaling $\|u_0\|_{L^2}=1$ the qualitative nature of our results would not change, but the time interval over which we see preservation of structure would be much shorter - indeed the resonance-based midpoint rule was found to still preserve the momentum and Hamiltonian over longer times than both reference methods in the case of $O(1)$ initial data (strong nonlinearity).

\begin{figure}[h!]
\centering
	\begin{subfigure}{0.495\textwidth}
		\centering
		\includegraphics[width=0.92\textwidth]{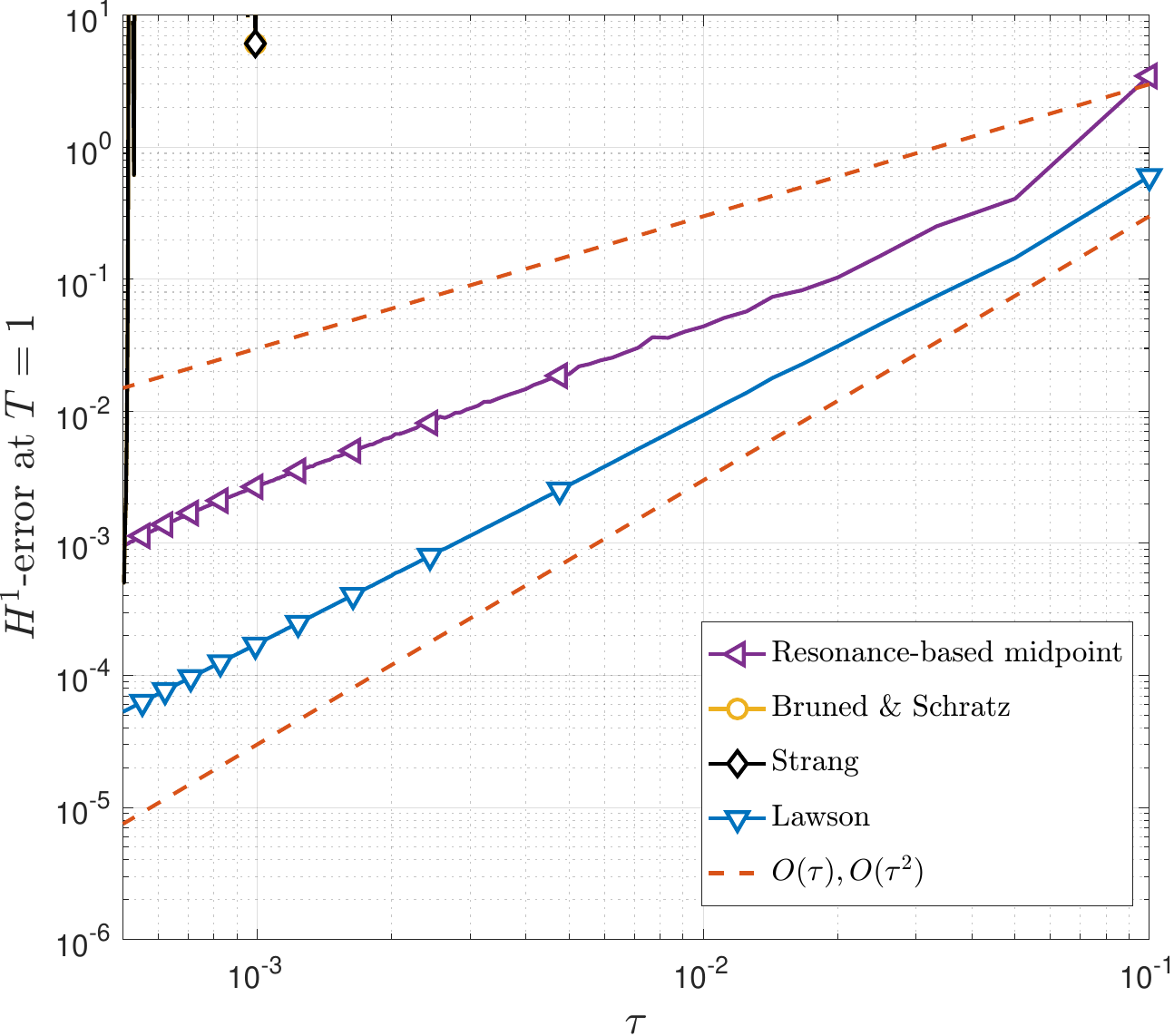}
  \caption{Initial data $u_0\in H^3$, as per \eqref{eqn:law_for_random_initial_conditions} with $\vartheta=3$.}
	\end{subfigure}
	\begin{subfigure}{0.495\textwidth}
		\centering
		\includegraphics[width=0.92\textwidth]{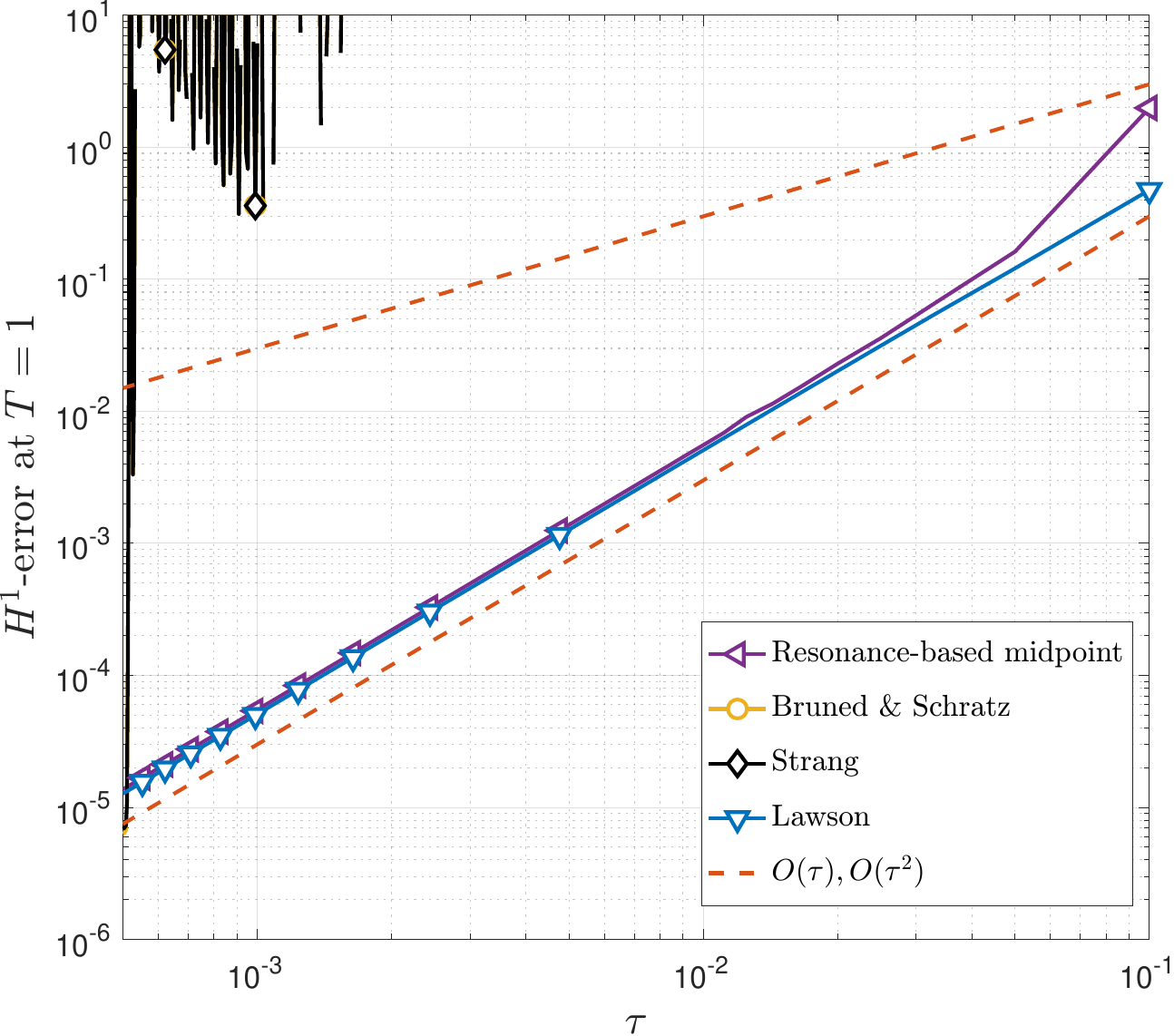}
  \caption{Initial data $u_0\in H^5$, as per \eqref{eqn:law_for_random_initial_conditions} with $\vartheta=5$.}
	\end{subfigure}\\
	\begin{subfigure}{0.495\textwidth}
		\centering
		\includegraphics[width=0.92\textwidth]{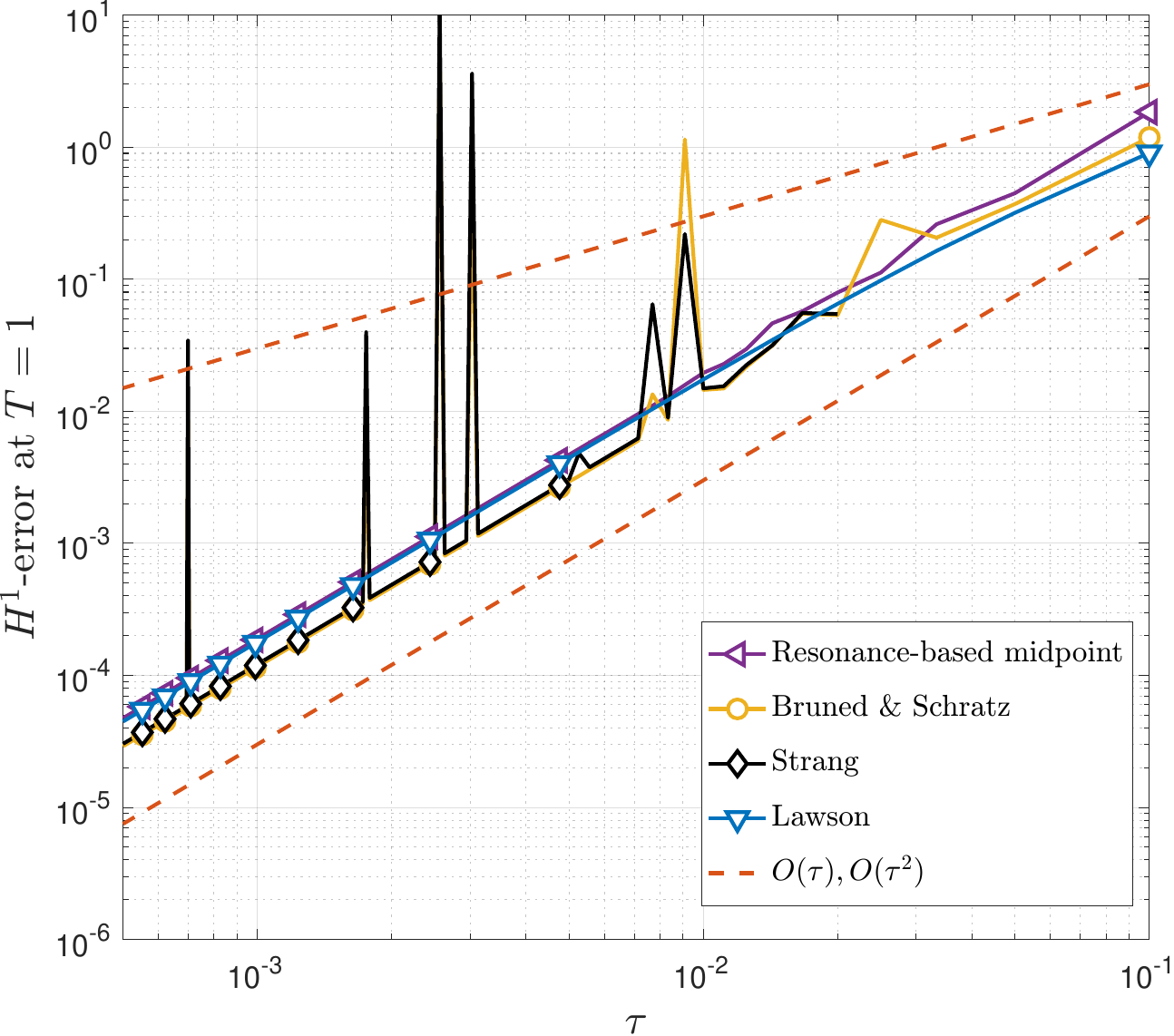}
		\caption{Initial data $u_0\in C^\infty$, as per \eqref{eqn:def_smooth_initial_data_numerics}.}
	\end{subfigure}
	\caption{Order plot measured in $H^1$, $M=128$ and $\|u_0\|_{L^2}=0.1$.}
	\label{fig:convergence_properties_kdv_M128}
\end{figure}

We see that our method converges at the predicted rates from Theorems~\ref{thm:global_error_in_H1_first_order} \& \ref{thm:global_error_in_H1_second_order} and indeed for solutions in $H^3$ appears to converge even faster than $\mathcal{O}(\tau)$. In this sense the method is able to outperform prior work including the resonance-based scheme introduced in \cite{bruned_schratz_2022}. The classical integrators `Strang' and `Lawson' perform poorly for low-regularity data, but converge as expected for smooth solutions.

However, as soon as we introduce more Fourier modes (cf. Figure~\ref{fig:convergence_properties_kdv_M128} with $M=128$) the behaviour of both classical integrators significantly worsens (indicative of the CFL requirement) whereas our resonance midpoint rule exhibits the same favourable convergence behaviour.

\newpage\ \newpage\subsubsection{Structure preservation properties}\label{sec:numerical_experiments_structure_preservation_kdv}
Having verified the convergence properties of our proposed numerical scheme, we now study its structure preservation properties for the case $M=64, \tau=0.02$. In the first instance, in {Figure~\ref{fig:momentumpreservationkdvsmooth}}, we look at the momentum \eqref{eqn:quadratic_first_integral_kdv} for smooth solutions which we observe to be preserved nearly exactly in our resonance-based midpoint rule, as well as in `Strang' and (according to the analysis in \cite[Proposition~3.1]{celledoni2008symmetric}) the `Lawson' method.
\begin{figure}[h!]
	\centering
	\includegraphics[width=0.9\linewidth]{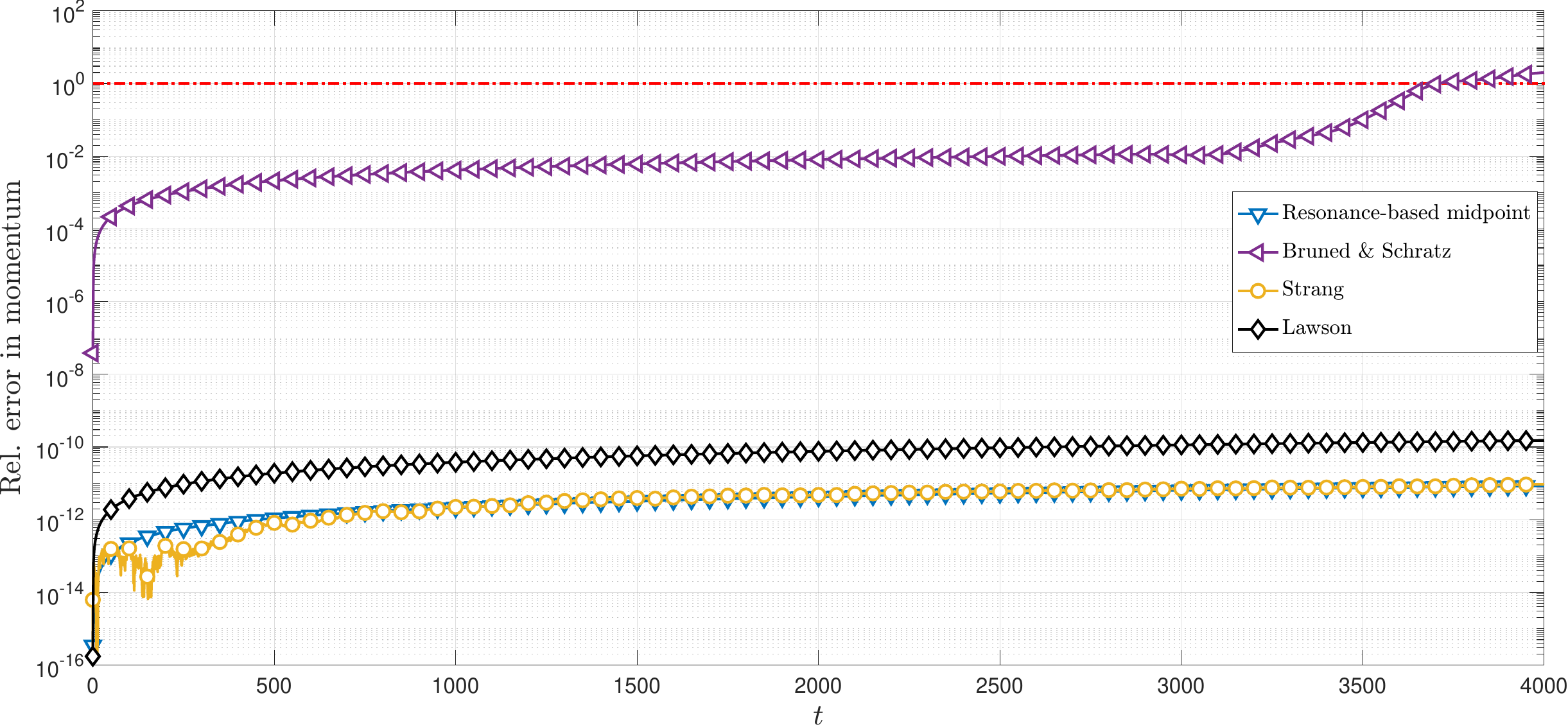}
	\caption{Relative error in the momentum with $u_0\in C^\infty$, as per \eqref{eqn:def_smooth_initial_data_numerics}, $\|u_0\|_{L^2}=0.1$, and $M=64$.}
	\label{fig:momentumpreservationkdvsmooth}
\end{figure}

\newpage However, the picture drastically changes when we move to low-regularity solutions in Figure~\ref{fig:momentumpreservationkdv}, where we observe a breakdown in the long-term preservation of this quantity in both the Lawson method and Strang splitting, whilst in our resonance-based midpoint rule the momentum remains preserved nearly exactly. The slightly larger error observed here is accounted for by the spatial discretisation error given the small number of Fourier modes in our spatial discretisation (which we chose to ensure the classical methods would provide competitive results as well), but the important observation is that this error does not grow in $t$.

\begin{figure}[h!]
	\centering
 \begin{subfigure}{1\textwidth}
 \centering
     \includegraphics[width=0.9\linewidth]{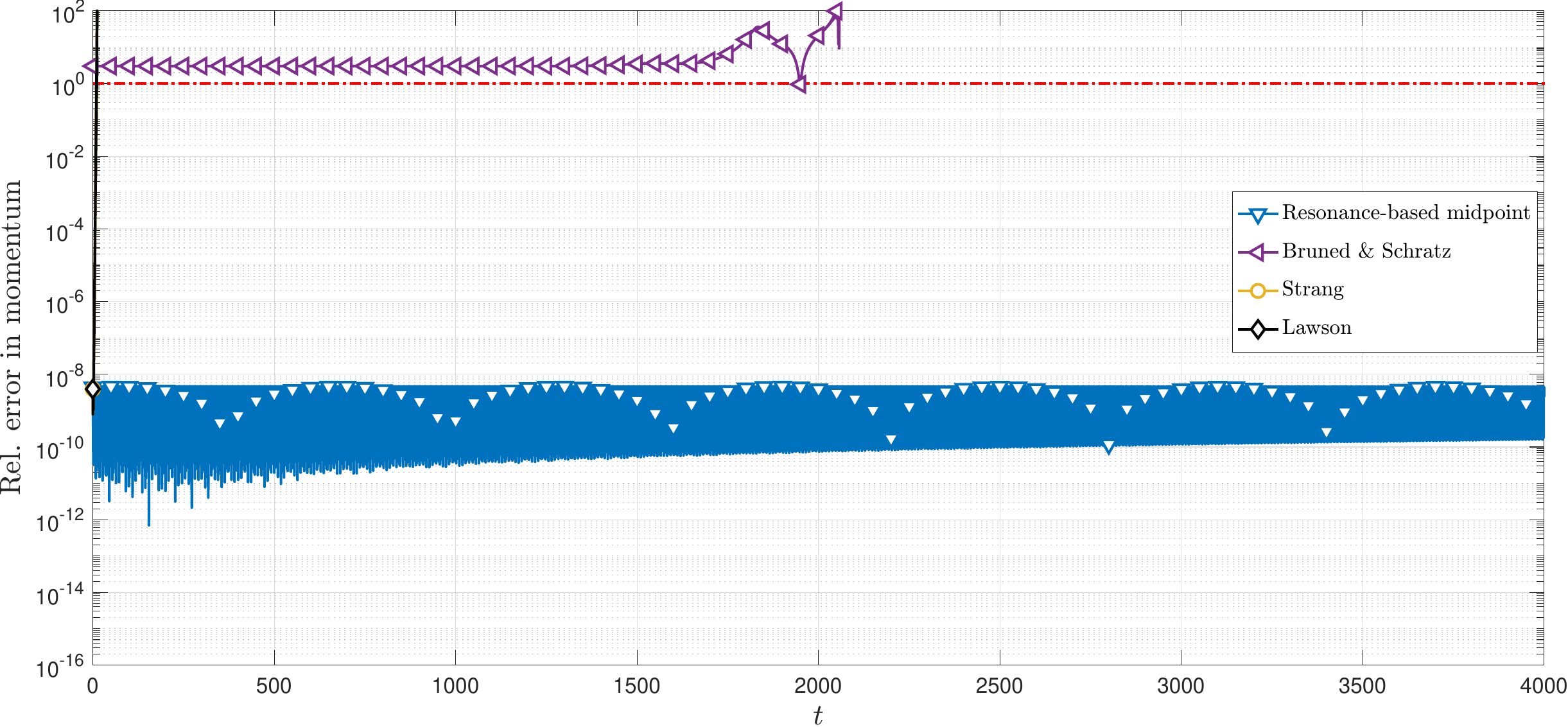}
	\caption{Full time interval $t\in[0,4000]$.}
	\label{fig:momentumpreservationkdvlowregularity}
 \end{subfigure}
 \begin{subfigure}{1\textwidth}
	\centering
	\includegraphics[width=0.9\linewidth]{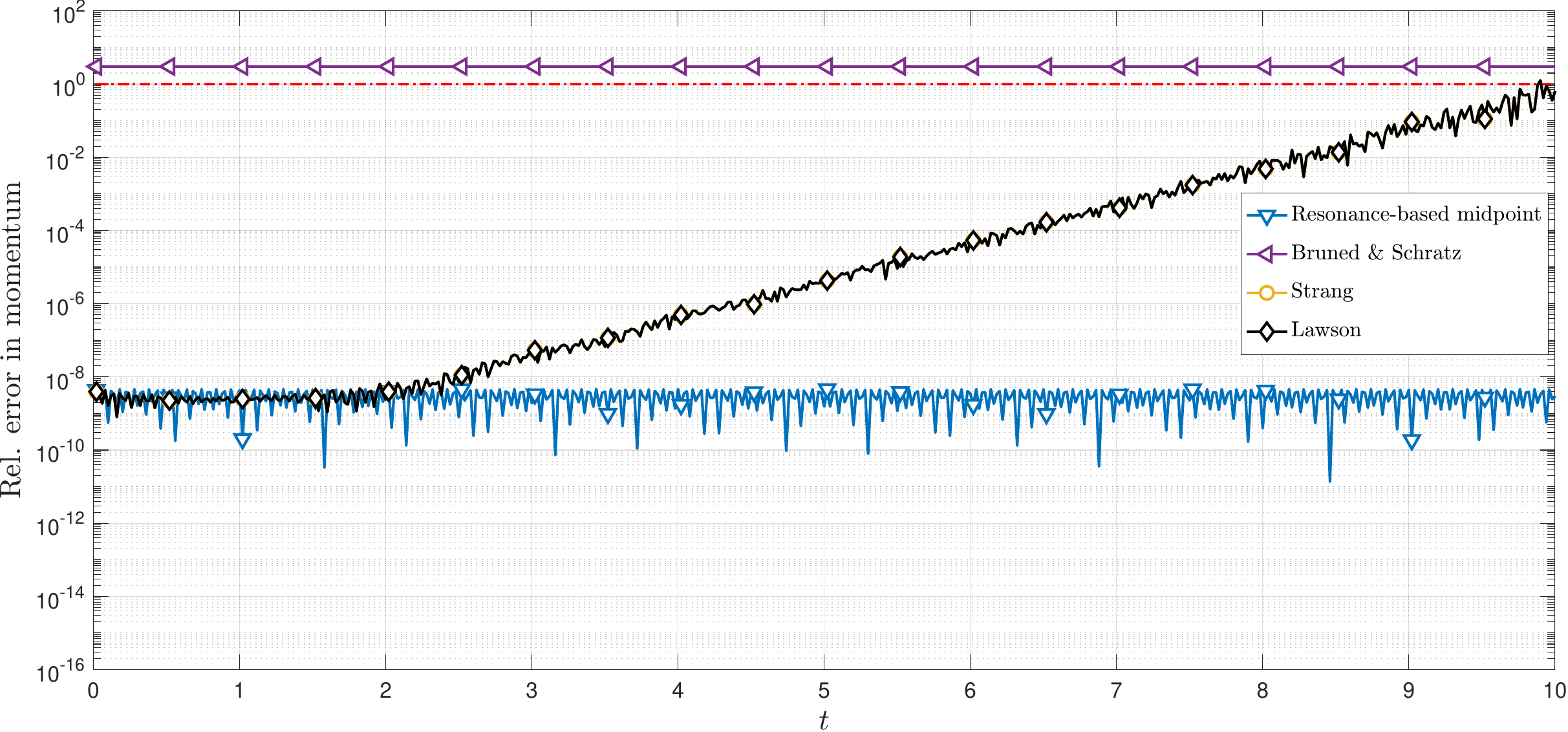}
	\caption{Zoom into the initial time interval $t\in[0,10]$.}
	\label{fig:momentumpreservationkdvlowregularity_zoom}
 \end{subfigure}
	\caption{Relative error in the momentum with $u_0\in H^3$, as per \eqref{eqn:law_for_random_initial_conditions} with $\vartheta=3$, $\|u_0\|_{L^2}=0.1$, and $M=64$.}
 \label{fig:momentumpreservationkdv}
\end{figure}

\newpage \ \vspace{-0.75cm}\\ Finally, we can again consider the Hamiltonian which is preserved under the exact flow of the KdV equation,
\begin{align*}
\mathcal{H}^{[KdV]}(u)=-\frac{1}{2}\int_{\mathbb{T}}3u_x^2+u^3\dd x.
\end{align*}
Like for the NLSE there are very few theoretical guarantees on the long-time preservation of this quantity under symplectic integrators for PDEs. Nevertheless, we found in numerical experiments that for a large number of time steps our method is able to preserve the Hamiltonian well over long times both in the smooth and rough regimes, while the Lawson method and the Strang splitting exhibit similar breakdown of the preservation properties for rough data as for the momentum. A representative example of this behaviour is shown in Figures~\ref{fig:hampreservationsmooth} \& \ref{fig:hampreservationlowregularity}.

\begin{figure}[h!]
	\centering
	\includegraphics[width=0.8\linewidth]{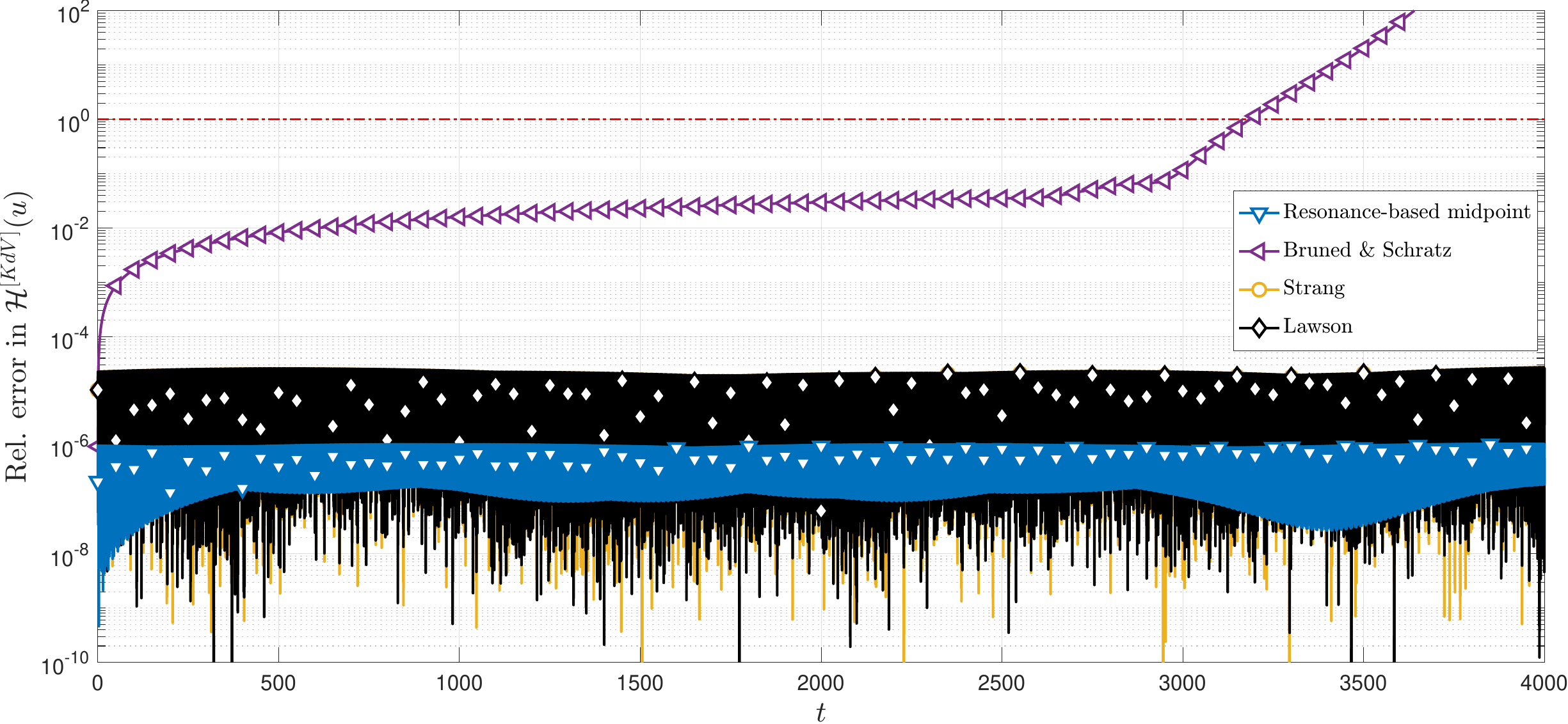}\vspace{-0.3cm}
	\caption{Relative error in the Hamiltonian with $u_0\in C^\infty(\mathbb{T})$ as per \eqref{eqn:def_smooth_initial_data_numerics}, $\|u_0\|_{L^2}=0.1$ and $M=64$.}\vspace{-0.1cm}
	\label{fig:hampreservationsmooth}
\end{figure}

\begin{figure}[h!]
	\centering
	\begin{subfigure}{1\textwidth}
	\centering
	\includegraphics[width=0.8\linewidth]{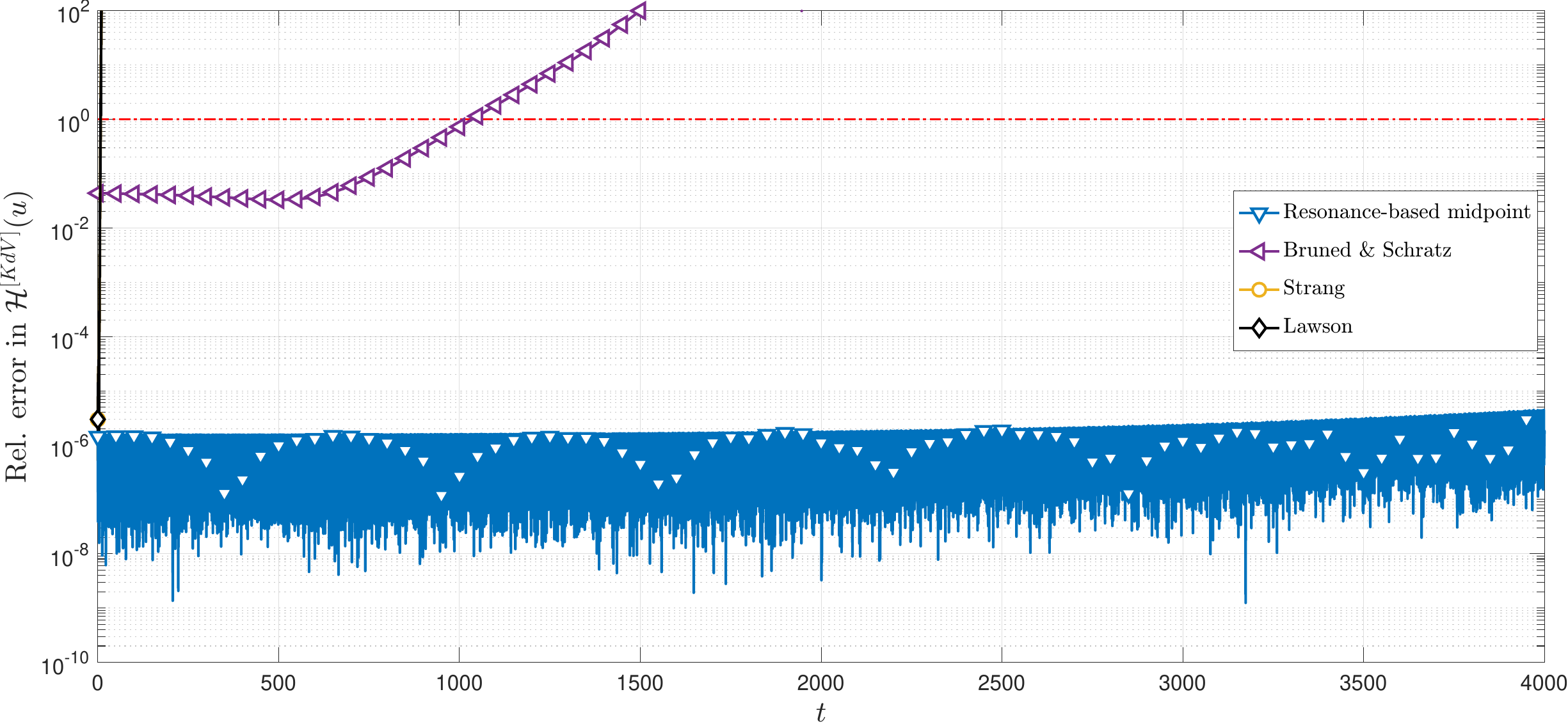}\vspace{-0.1cm}
	\caption{Full time interval $t\in[0,4000]$.}
	\end{subfigure}
	\begin{subfigure}{1\textwidth}
	\centering
	\includegraphics[width=0.8\linewidth]{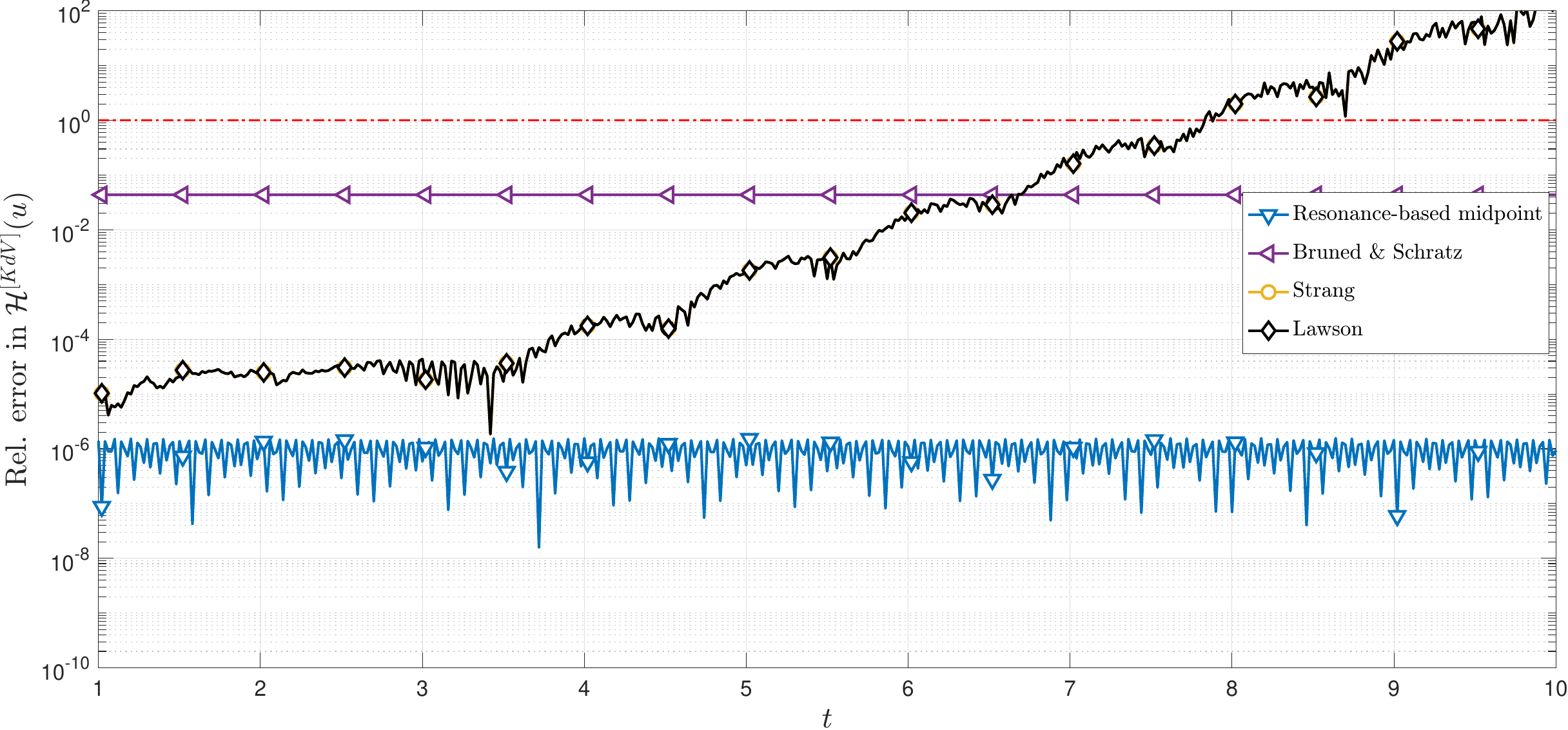}\vspace{-0.1cm}
	\caption{Zoom into the initial time interval $t\in[0,10]$.}
	\label{fig:hampreservationlowregularity_sub}
	\end{subfigure}\vspace{-0.3cm}
	\caption{Relative error in the Hamiltonian with $u_0\in H^3$ (\eqref{eqn:law_for_random_initial_conditions} with $\vartheta=3$), $\|u_0\|_{L^2}=0.1$ and $M=64$.}
	\label{fig:hampreservationlowregularity}
\end{figure}
\section{Conclusions}\label{sec:conclusions}
In this work we {present} a novel point of view for the construction of low-regularity integrators for {a class of dispersive nonlinear partial differential equations}. This novel point of view allows us to design a class of methods which incorporates a much larger number of degrees of freedom at similar low-regularity convergence properties as compared to prior work. We call this class of methods RK resonance-based schemes, and provide a number of examples of novel symplectic low-regularity integrators for {two canonical models of infinite-dimensional dispersive nonlinear Hamiltonian systems, the Korteweg de Vries and the nonlinear Schr\"odinger equations}. A particularly interesting example is the resonance-based midpoint rule, which we study in further detail by providing a rigorous convergence analysis under low regularity assumptions and confirming the favourable properties of this novel scheme as compared to state-of-the-art methods both from low-regularity integration and geometric numerical integration in numerical experiments.

We believe this work takes a significant step towards reconciling ideas from geometric numerical integration with constructions of resonance-based schemes in that, to the best of our knowledge, this work constitutes the first time the notion of symplecticity could be captured in low-regularity integrators for dispersive nonlinear equations. {As indicated in Section~\ref{sec:RK_res_schemes_for_general_eqns} \& \ref{sec:symplectic_kernel_approximations} we outline a foundation for the extension of these ideas to} a wider class of dispersive infinite-dimensional Hamiltoninan systems. {We note that a particular challenge is posed by the construction of symplectic kernel approximations as introduced in Section~\ref{sec:symplectic_kernel_approximations_NLSE} for different types of dispersive nonlinear equations. However,} based on the promising developments in related recent work \cite{shen2019geometric} which was able to construct symplectic exponential integrators for a large class of semilinear Poisson systems, and seeing as resonance-based schemes are nowadays available for a wide range of equations \cite{bruned_schratz_2022}, we expect that an extension of our construction is indeed feasible {and this will be subject to future work}.  {As outlined in Section~\ref{sec:general_idea_convergence_analysis}, an} additional problem to be studied in future research is the design of a structured {local error analysis} of the presented methods {in the low-regularity regime} (cf. \cite{luan2013exponential}), {which might require the development of a designated approach similar to B-series potentially combining ideas from \cite{bruned_schratz_2022} and \cite{luan2013exponential}.}

\section*{Acknowledgements}
The authors would like to thank Valeria Banica (Sorbonne Universit\'e), Yvonne Alama Bronsard (Sorbonne Universit\'e), Yvain Bruned (Université de Lorraine), Erwan Faou {(INRIA Bretagne Atlantique \& Universit\'e de Rennes I)}, Felice Iandoli (Università della Calabria) and Brynjulf Owren (Norwegian University of Science and Technology) for several interesting and helpful discussions. {We are particularly grateful to Buyang Li (The Hong Kong Polytechnic University) and Shu Ma (City University of Hong Kong) for interesting discussions that helped reduce the regularity assumptions required in Theorems~\ref{thm:L2_pres_condition_direct_flow} \& \ref{thm:preservation_of_symplectic_form}. Furthermore,} we would also like to express our gratitude to the constructive comments and feedback received from anonymous reviewers of an earlier version of this manuscript.} {Both authors gratefully acknowledge funding from the European Research Council (ERC) under the European
Union’s Horizon 2020 research and innovation programme (grant agreement No.\ 850941). GM additionally gratefully acknowledges funding from the European Union’s Horizon Europe research and innovation programme under the Marie Skłodowska--Curie grant agreement No.\ 101064261.}
\bibliographystyle{siam}     
\nocite{*}  
\bibliography{biblio1}
\addcontentsline{toc}{section}{References}
\begin{appendix}
\section{Proof of Proposition~\ref{prop:local_error_non-symplectic_second_order_scheme}}\label{app:proof_local_error_non-symplectic_second_order_scheme}
We recall the statement of Proposition~\ref{prop:local_error_non-symplectic_second_order_scheme}:
\begin{proposition}  {Fix $R>0,s>1/2,\gamma\in[0,2]$, and let} us denote by {$\tau\mapsto\phi_{\tau}(u(t_n))\in H^{s+2}$} the solution to \eqref{eqn:Cauchy_problem_NLS} with initial condition {$u(t_n)\in H^{s+2}$} and denote by $\Phi_{\tau}$ the time-stepping scheme \eqref{eqn:NLSE_non-symplectic_second_order_integrator}{. Then} there is a $\tau_R>0$ such that for all $\tau\in [0,\tau_R)$ we have
whenever $\sup_{t\in[0,\tau]}\|{\phi_{t}}(u(t_n))\|_{H^{s+2}}<R$ then
\begin{align*}
	\left\|{\phi_\tau}(u(t_n))-\Phi_{\tau}(u(t_n))\right\|_{H^s}\leq c_R\tau^{3} 
	\end{align*}
for some constant $c_R>0$ depending on $R>0,s$.
\end{proposition}
The proof relies on the following lemma concerning the solution of the implicit equations which can be proved analogously to Theorem~\ref{thm:sln_of_implicit_eqns_NLSE}, where we define here the map $\mathcal{S}$ by
\begin{align*}
\widehat{\left(\mathcal{S}(w)\right)}_k&=e^{-ik^2\tau}\hat{u}^n_k+(-i\mu)e^{-ik^2\tau}\sum_{k+k_1=k_2+k_3}\int_0^\tau \left[e^{-2iskk_1}+e^{2isk_2k_3}-1\right]\\
    &\quad\quad\quad\quad\quad\quad\quad\quad\quad\quad\quad\quad\quad\left[\overline{\hat{u}_{k_1}^n}\hat{u}_{k_2}^n\hat{u}_{k_3}^n+\frac{s}{\tau}\left(e^{i\tau(k_2^2+k_3^2-k_1^2)}\overline{\hat{w}_{k_1}}\hat{w}_{k_2}\hat{w}_{k_3}-\overline{\hat{u}_{k_1}^n}\hat{u}_{k_2}^n\hat{u}_{k_3}^n\right)\right]\dd s.
\end{align*}

    \begin{lemma}\label{lem:appendix_sln_of_implicit_eqns_NLSE}
	Let $R>0$ and $s>1/2$. Then there is a constant $\tau_R$ such that for all $\tau\in(0,\tau_R)$ and any $u^n\in B_R(H^{s})=\{u\in H^s\,\vert\, \|u\|_{H^s}<R\}$ we have that $u^{n+1}$, the exact solution of \eqref{eqn:resonance_based_midpoint_rule_u_NLSE}, is given by the following limit in $H^s$:
	\begin{align}\label{eqn:appendix_limit_expression_for_u^{k+1}}
		u^{n+1}=\lim_{j\rightarrow \infty}\mathcal{S}^{(j)}(e^{i\tau{\partial_x^2}}u^n),\quad \text{where\ }\mathcal{S}^{(j)}(u)=\underbrace{\mathcal{S}\circ\cdots\circ\mathcal{S}}_{j\text{-times}}(u).
	\end{align}
Moreover, we have the estimate
\begin{align}\label{eqn:appendix_estimate_implicit_equation_forward_distance}
	\left\|u^{n+1}-e^{i\tau{\partial_x^2}}u^{n}\right\|_{H^s}\leq \tau \tilde{C}_R,
\end{align}
for some $\tilde{C}_R$ which depends only on $R$ (and $s$).
\end{lemma}

\begin{proof}[Proof of Proposition~\ref{prop:local_error_non-symplectic_second_order_scheme}] By \eqref{eqn:twisted_NLS} and \eqref{eqn:appendix_estimate_implicit_equation_forward_distance} if follows that under the same assumptions of Lemma~\ref{lem:appendix_sln_of_implicit_eqns_NLSE} we have
\begin{align}\label{eqn:app1_first_estimate}
    \left\|\Phi_{\tau}(u(t_n))-\phi_{\tau}(u(t_n))\right\|_{H^s}\leq \tau C_R, \ \forall \tau\in(0,\tau_R),
\end{align}
for some constants $\tau_R,C_R>0$ depending only on $s,R>0$. Thus we have 
\begin{align}\label{eqn:app1_auxiliary_estimate1}
     \left\|\Phi_{\tau}(u(t_n))-e^{i\tau\partial_x^2}u(t_n)-\mathcal{S}(\phi_{\tau}(u(t_n)))\right\|_{H^s}\leq \tau^2 C_R,\ \forall\tau\in(0,\tau_R),
\end{align}
where $\tau_R,C_R>0$ are again two constants which depend only on $s,R>0$, but which may be of different value than above. Moreover we have, writing $v(t)=\exp(-it\partial_x^2)\phi_t(u(t_n))$,
\begin{align*}
    &\left\|\Phi_{\tau}(u(t_n))-e^{i\tau\partial_x^2}u(t_n)-\mathcal{S}(\phi_{\tau}(u(t_n)))\right\|_{H^s}^2\\
    &\quad=|\mu|^2\sum_{k\in\mathbb{Z}}\langle k\rangle^{2s}\left|\sum_{k+k_1=k_2+k_3}\int_0^\tau\left[e^{-2iskk_k+2isk_2k_3}\overline{\hat{v}_{k_1}(s)}\hat{v}_{k_2}(s)\hat{v}_{k_3}(s)\right]\dd s\right.\\
    &\quad\quad\quad\quad-\left.\int_0^\tau \left[e^{-2iskk_1}+e^{2isk_2k_3}-1\right]\left[\overline{\hat{v}_{k_1}(0)}\hat{v}_{k_2}(0)\hat{v}_{k_3}(0)\right.\right.\\
    &\quad\quad\quad\quad\quad\quad\quad\left.\left.+\frac{s}{\tau}\left(e^{i\tau(k_2^2+k_3^2-k_1^2)}\overline{\hat{v}_{k_1}(\tau)}\hat{v}_{k_2}(\tau)\hat{v}_{k_3}(\tau)-\overline{\hat{v}_{k_1}(0)}\hat{v}_{k_2}(\tau)\hat{v}_{k_3}(\tau)\right)\right]\dd s\right|^2.
\end{align*}
Combining \eqref{eqn:local_error_second_order_kernel_approx_NLSE} and \eqref{eqn:twisted_NLS} we find
\begin{align}\label{eqn:app1_auxiliary_estimate2}
    \left\|\Phi_{\tau}(u(t_n))-e^{i\tau\partial_x^2}u(t_n)-\mathcal{S}(\phi_{\tau}(u(t_n)))\right\|_{H^s}^2\leq \tau^3 C_R,\ \forall\tau\in(0,\tau_R),
\end{align}
where $\tau_R,C_R>0$ are again two constants which depend only on $s,R>0$. \eqref{eqn:app1_auxiliary_estimate1} \& \eqref{eqn:app1_auxiliary_estimate2} together imply that 
\begin{align}\label{eqn:app1_second_estimate}
    \left\|\Phi_{\tau}(u(t_n))-\phi_{\tau}(u(t_n))\right\|_{H^s}\leq \tau^2 C_R, \ \forall \tau\in(0,\tau_R),
\end{align}
thus improving on \eqref{eqn:app1_first_estimate} by a factor of $\tau$. Iterating this process once again but replacing \eqref{eqn:app1_first_estimate} by \eqref{eqn:app1_second_estimate} yields the desired result.
\end{proof}

\section{Proof of Lemma~\ref{lem:stability_lemma_1}}\label{app:proof_of_stability_lemma_1}
For completeness we recall the statement of the lemma.
\begin{lemma}Let us introduce the notation
	\begin{align*}
		\mathcal{G}(t_n,\tau,\tilde{v}):=\frac{1}{6}\e^{(t_n+\tau)\partial_x^3}\left(\e^{-(t_n+\tau)\partial_x^3}\partial_x^{-1}\tilde{v}\right)^2-\frac{1}{6}\e^{t_n\partial_x^3}\left(\e^{-t_n\partial_x^3}\partial_x^{-1}\tilde{v}\right)^2
	\end{align*}
	Then, for $l=1,2,3$, there is a continuous function $M_l:\mathbb{R}_{\geq0}\times\mathbb{R}_{\geq0}\rightarrow \mathbb{R}_{\geq0}$ such that
	\begin{align*}
		\| \mathcal{G}(t_n,\tau,f)-\mathcal{G}(t_n,\tau,g)\|_{H^l}\leq {\tau^{\frac{1}{2}}} M_l\left(\|f\|_{H^l},\|g\|_{H^l}\right)\|f-g\|_{H^l}.
	\end{align*}
\end{lemma}

\begin{proof}
	As highlighted in {Section}~\ref{sec:properties_of_implicit_equations} the cases $l=1,2$ are treated in \cite[Eq.~(38) \& Lemma~2.4]{hofmanova2017exponential}. It remains to prove the case $l=3$. For this we proceed similarly to \cite[Lemma~2.4]{hofmanova2017exponential} and write
{	\begin{align*}
&\| \mathcal{G}(t_n,\tau,f)-\mathcal{G}(t_n,\tau,g)\|_{H^3}^2\\
&=\frac{1}{36}\| \partial_x^3\e^{-(t_n+\tau)\partial_x^3}\left[\mathcal{G}(t_n,\tau,f)-\mathcal{G}(t_n,\tau,g)\right]\|_{L^2}^2\\
&=\!\frac{1}{36}\!\underbrace{\left\langle\!\! \partial_x^3\!\!\left[\!\left(\e^{-\tau\partial_x^3}\partial_x^{-1}\tilde{f}\right)^2\!\!\!-\!\left(\!\e^{-\tau\partial_x^3}\partial_x^{-1}\tilde{g}\right)^2\right]\!\!-\!\partial_x^3\e^{-\tau\partial_x^3}\!\!\left[\left(\partial_x^{-1}\tilde{f}\right)^2\!\!\!-\!\!\left(\partial_x^{-1}\tilde{g}\right)^2\right]\!,\!\partial_x^3\!\!\left[\!\left(\e^{-\tau\partial_x^3}\partial_x^{-1}\!\!\tilde{f}\right)^2\!\!\!-\!\!\left(\e^{-\tau\partial_x^3}\partial_x^{-1}\tilde{g}\right)^2\right]\!\right\rangle}_{=:A_1}\\
&\quad-\frac{1}{36}\underbrace{\left\langle \!\partial_x^3\!\left[\left(\e^{-\tau\partial_x^3}\partial_x^{-1}\!\tilde{f}\right)^2\!\!\!-\!\left(\e^{-\tau\partial_x^3}\partial_x^{-1}\!\tilde{g}\right)^2\!\right]\!\!-\!\partial_x^3\e^{-\tau\partial_x^3}\!\!\left[\left(\partial_x^{-1}\!\tilde{f}\right)^2\!\!\!-\!\left(\partial_x^{-1}\tilde{g}\right)^2\right],\partial_x^3\e^{-\tau\partial_x^3}\!\!\left[\!\left(\!\partial_x^{-1}\tilde{f}\right)^2\!\!\!-\!\left(\partial_x^{-1}\tilde{g}\right)^2\!\right]\right\rangle,}_{=:A_2}
	\end{align*}}
where we denoted by $\tilde{f}=\e^{-t_n\partial_x^3}f$ and $\tilde{g}=\e^{-t_n\partial_x^3}g$. Now let us estimate $A_2$ first. Letting $F:=\partial_x^3\e^{-\tau\partial_x^3}\left[\left(\partial_x^{-1}\tilde{f}\right)^2-\left(\partial_x^{-1}\tilde{g}\right)^2\right]$, we have
\begin{align*}
	|A_2|&=\left|\sum_{m=a+b}(-im^3)\overline{\hat{F}_{-m}}\left(\e^{i\tau (a^3+b^3)}-\e^{i\tau(a+b)^3}\right)\frac{1}{ab}\left(\hat{\tilde{f}}_a\hat{\tilde{f}}_b-\hat{\tilde{g}}_a\hat{\tilde{g}}_b\right)\right|\\
	&\leq \sum_{m=a+b}\frac{|a+b|^3}{|a||b|}|\overline{\hat{F}_{-m}}|\left|1-\e^{i\tau3ab(a+b)}\right|\left|\hat{\tilde{f}}_a\hat{\tilde{f}}_b-\hat{\tilde{g}}_a\hat{\tilde{g}}_b\right|\\
&\leq 3\tau \sum_{m=a+b}|a+b|^4|\overline{\hat{F}_{-m}}|\left|\hat{\tilde{f}}_a\hat{\tilde{f}}_b-\hat{\tilde{g}}_a\hat{\tilde{g}}_b\right|\\
&\leq 3\tau \sum_{m=a+b}\left|(-im)\overline{\hat{F}_{-m}}\right||a+b|^3\left|(\hat{\tilde{f}}_a-\hat{\tilde{g}}_a)\hat{\tilde{f}}_b+\hat{\tilde{g}}_a(\hat{\tilde{f}}_b-\hat{\tilde{g}}_b)\right|\\
&\leq 3\tau \sum_{m=a+b}\left|(-im)\overline{\hat{F}_{-m}}\right|(|a|^3+3|a|^2|b|+3|a||b|^2+|b|^3)\left[\left|(\hat{\tilde{f}}_a-\hat{\tilde{g}}_a)\hat{\tilde{f}}_b\right|+\left|\hat{\tilde{g}}_a(\hat{\tilde{f}}_b-\hat{\tilde{g}}_b)\right|\right]
\end{align*}
Thus by Cauchy--Schwarz we have, for some $c_1>0$,
\begin{align}\label{eqn:first_estimate_A_2_appendix}
	|A_2|\leq c_1\tau \|\partial_x F\|_{L^2}\left(\sum_{j=0}^{3}\left(\|\partial_x^{j}\tilde{g}\ast \partial_x^{3-j}(\tilde{f}-\tilde{g})\|_{L^2}+\|\partial_x^{j}\tilde{f}\ast \partial_x^{3-j}(\tilde{f}-\tilde{g})\|_{L^2}\right)\right).
\end{align}
We can now make use of the following observation:
\begin{claim} Let $j\in\{0,1,2,3\}$ then there is a constant $c_2>0$ such that for any functions $f,g\in H^3$ we have
	\begin{align}\label{eqn:convolution_bound_appendix}
		\|\partial_x^{j}f\ast\partial_x^{3-j}g\|_{L^2}\leq c_2\|f\|_{H^3}\|g\|_{H^3}.
	\end{align}
\end{claim}
\begin{subproof}[Proof of Claim]
Let us take without loss of generality $j>0$, then for $f^{(j)}(k):=|k|^{j}|\hat{f}_k|, k\in\mathbb{Z}\setminus\{0\}.$ we have by Young's inequality
\begin{align*}
	\|\partial_x^{j}f\ast\partial_x^{3-j}g\|_{L^2}&=\|f^{(j)}\ast g^{(3-j)}\|_{l^2}\\
	&\leq \|f^{(j)}\|_{l^1}\|g^{(3-j)}\|_{l^2}.
\end{align*}
Now using the Cauchy--Schwarz inequality we have $\|f^{(j)}\|_{l^1}\leq c_2\|f^{(3)}\|_{l^2}$ for some constant $c_2>0$ independent of $f$ which immediately implies the bound \eqref{eqn:convolution_bound_appendix}.
\end{subproof}
Thus, combining \eqref{eqn:first_estimate_A_2_appendix} and \eqref{eqn:convolution_bound_appendix} we conclude:
{\begin{align*}
	|A_2|\leq \tau c_3\|\partial_x F\|_{L^2}\|f-g\|_{H^3}\leq \tau \tilde{c}_3\|f-g\|_{H^3}^2,
\end{align*}
where $c_3,\tilde{c}_3>0$ are constants} which depends continuously on $\|f\|_{H^3},\|g\|_{H^3}$. We can now introduce $\tilde{F}:=\partial_x^3\left[\left(\e^{-\tau\partial_x^3}\partial_x^{-1}\tilde{f}\right)^2-\left(e^{-\tau\partial_x^3}\partial_x^{-1}\tilde{g}\right)^2\right]$ and follow exactly the same estimates to show
{\begin{align*}
	|A_1|\leq \tau c_4\|f-g\|_{H^3}^2,
\end{align*}}
where $c_4>0$ is a constant which depends continuously on $\|f\|_{H^3},\|g\|_{H^3}$. Therefore the result follows.
\end{proof}
\section{Proof of Lemma~\ref{lem:auxilliary_convolution_estimate}}\label{app:proof_of_auxilliary_convolution_estimate}
For completeness we recall the statement of the Lemma.
\begin{lemma}
	For any $j,l\in\mathbb{N}$ such that $j+l\geq 1$ there is a constant $c>0$ such that for all $f,g\in H^{j+l}$ and any $F\in L^2$ whose Fourier coefficients satisfy
	\begin{align*}
	\hat{F}_m\leq \sum_{m=a+b}|m|^l |\hat{f}_a||\hat{g}_b|, \quad \forall m\in\mathbb{Z},
	\end{align*}
	we have
	\begin{align*}
	\|F\|_{H^j}\leq c \|f\|_{H^{j+l}}\|g\|_{H^{j+l}}.
	\end{align*}
\end{lemma}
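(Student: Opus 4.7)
The plan is to reduce the claimed inequality to Young's convolution inequality on Fourier coefficients, after first peeling off the derivative loss factor $|m|^l$ by the standard Peetre-type inequality. Concretely, I would start by expanding
\begin{align*}
\|F\|_{H^j}^2 = \sum_{m\in\mathbb{Z}\setminus\{0\}} |m|^{2j} |\hat F_m|^2,
\end{align*}
and applying the hypothesis on $\hat{F}_m$ to bound
\begin{align*}
|m|^j |\hat F_m| \leq \sum_{m=a+b} |m|^{j+l} |\hat f_a||\hat g_b|.
\end{align*}
Since $j+l\geq 1$, the elementary inequality $|a+b|^{j+l} \leq 2^{j+l-1}(|a|^{j+l}+|b|^{j+l})$ then splits the expression into two convolutions on the integers.

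Next I would write $h_f(a):= |a|^{j+l}|\hat f_a|$, $k_g(b):=|\hat g_b|$ (and the symmetric pair $h_g, k_f$), so that the two contributions are $h_f * k_g$ and $k_f * h_g$ evaluated at $m$. Young's inequality on $l^2(\mathbb{Z}\setminus\{0\})$ in the form $l^2 * l^1 \hookrightarrow l^2$ gives
\begin{align*}
\|h_f * k_g\|_{l^2} \leq \|h_f\|_{l^2}\, \|k_g\|_{l^1} = \|f\|_{H^{j+l}}\, \sum_{b\neq 0} |\hat g_b|,
\end{align*}
and similarly for the other term.

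Finally, the last $l^1$-norm is controlled by Cauchy--Schwarz: since $j+l\geq 1 > 1/2$,
\begin{align*}
\sum_{b\neq 0} |\hat g_b| \leq \left(\sum_{b\neq 0} |b|^{-2(j+l)}\right)^{1/2} \left(\sum_{b\neq 0} |b|^{2(j+l)}|\hat g_b|^2\right)^{1/2} \lesssim \|g\|_{H^{j+l}},
\end{align*}
where the weight sum converges precisely because $j+l\geq 1$. Combining these bounds with the analogous estimate for the symmetric contribution yields the claim with an explicit constant depending only on $j+l$.

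The only subtlety, and the step I would flag as the main point to verify carefully, is that the zero-mode $b=0$ is excluded (this is guaranteed by Assumption~\ref{assumption:zero_mass} and the fact that $\mathcal{F}_M$-type products of mean-zero functions are handled on the quotient), so that the weighted series $\sum_{b\neq 0}|b|^{-2(j+l)}$ is finite; otherwise the borderline case $j+l=1$ would fail. With that observation, no additional estimates beyond Young and Cauchy--Schwarz are needed.
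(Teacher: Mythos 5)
Your proof is correct and follows essentially the same route as the paper's: the same expansion of $\|F\|_{H^j}^2$, the same splitting via $|a+b|^{j+l}\lesssim |a|^{j+l}+|b|^{j+l}$, and a final Cauchy--Schwarz step using the convergence of $\sum_{b\neq 0}|b|^{-2(j+l)}$; the only cosmetic difference is that you invoke Young's inequality $l^2\ast l^1\hookrightarrow l^2$ directly where the paper derives the same bound by the discrete Minkowski inequality. Your remark about excluding the zero mode is consistent with the paper's conventions (the $H^s$ norms are defined over $\mathbb{Z}\setminus\{0\}$ under the zero-mass assumption), so no gap remains.
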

\begin{proof} The proof follows the arguments from \cite[Appendix~A]{maierhofer2022convergence} closely. We begin by noting that for any $r\geq 0$ there is a constant $C_r>0$ such that for all $a,b\in\mathbb{Z}$ we have
	\begin{align*}
	(|a|+|b|)^r\leq C_r(|a|^r+|b|^r).
	\end{align*}
	Thus we can estimate
	\begin{align}\nonumber
	\|F\|_{H^j}^2&=\sum_{m\in\mathbb{Z}\setminus\{0\}}|m|^{2j}|\hat{F}_m|^2\leq \sum_{m\in\mathbb{Z}\setminus\{0\}}\left(\sum_{m=a+b}|m|^{l+j} |\hat{f}_a||\hat{g}_b|\right)^2\\\nonumber
	&\leq\sum_{a\in\mathbb{Z}\setminus\{0\}}\left(\sum_{b\in\mathbb{Z}\setminus\{0\}}|a+b|^{l+j} |\hat{f}_a||\hat{g}_b|\right)^2\\\label{eqn:appendix2_intermediate_estimate_convolution_inequality}
	&\leq C \sum_{a\in\mathbb{Z}\setminus\{0\}}\left(\sum_{b\in\mathbb{Z}\setminus\{0\}}|a|^{l+j} |\hat{f}_a||\hat{g}_b|\right)^2+\sum_{a\in\mathbb{Z}\setminus\{0\}}\left(\sum_{b\in\mathbb{Z}\setminus\{0\}}|b|^{l+j} |\hat{f}_a||\hat{g}_b|\right)^2.
	\end{align}
	Now by the discrete Minkowski inequality we have
	\begin{align*}
	\sum_{a\in\mathbb{Z}\setminus\{0\}}\left(\sum_{b\in\mathbb{Z}\setminus\{0\}}|a|^{l+j} |\hat{f}_a||\hat{g}_b|\right)^2&\leq \left(\sum_{b\in\mathbb{Z}\setminus\{0\}}\left(\sum_{a\in\mathbb{Z}\setminus\{0\}}|a|^{2(l+j)}|\hat{f}_a|^2|\hat{g}_b|^2\right)^{\frac{1}{2}}\right)^2\\
	&\leq \|f\|_{H^{l+j}}^2\left(\sum_{b\in\mathbb{Z}\setminus\{0\}}|\hat{g}_b|\right)^2\leq \left(\sum_{b\in\mathbb{Z}\setminus\{0\}}|b|^{-2(j+l)}\right)\|f\|_{H^{l+j}}\|g\|_{H^{l+j}}
	\end{align*}
	where in the final line we used the Cauchy--Schwarz inequality. The final sum converges because $j+l\geq1$, thus applying a similar estimate to the second term in \eqref{eqn:appendix2_intermediate_estimate_convolution_inequality} concludes the proof.
\end{proof}
\end{appendix}
\end{document}